\documentclass{article}
\usepackage[utf8]{inputenc}

\title{Is `being above the median' a noise sensitive property?}
\author{Daniel Ahlberg\footnote{Department of Mathematics, Stockholm University.}\; and Daniel de la Riva\footnote{Department of Mathematics, University of British Columbia.}}

\date{\vspace{-5ex}}
\usepackage{amsmath}
\usepackage{color}
\usepackage{float}
\usepackage{scalerel}[2016/12/29]
\usepackage{amsthm}
 \usepackage{bbm}
 \usepackage{comment}
\usepackage{dirtytalk}
\usepackage{mathtools}

\DeclarePairedDelimiter\floor{\lfloor}{\rfloor}

\usepackage{enumerate}
\usepackage[maxbibnames=99]{biblatex}
\usepackage{amsfonts}
\usepackage{graphicx}
\usepackage{sectsty}
\usepackage{dsfont}
\usepackage{amsmath,amssymb,amsfonts}
\usepackage{tikz}
\usetikzlibrary{hobby}
\sectionfont{\large}
\graphicspath{ {images/} }
\newtheorem{theorem}{Theorem}

\newtheorem{proposition}[theorem]{Proposition}

\newtheorem{remark}[theorem]{Remark}
\newtheorem{lemma}[theorem]{Lemma}

\def\E{{\mathbb{E}}}

\def\P{{\mathbb{P}}}
\def\Var{\textup{Var}}
\def\Inf{\textup{Inf}}
\def\ind{\mathbbm{1}}
\def\eps{\varepsilon}
\numberwithin{theorem}{section}
\usepackage{biblatex}
\def\R{{\mathbb{R}}}
\def\Z{{\mathbb{Z}}}

\DeclareFieldFormat{pages}{#1}
\renewbibmacro{in:}{%
  \ifentrytype{article}
    {}
    {\bibstring{in}%
     \printunit{\intitlepunct}}}

\addbibresource{mybibliography.bib}

\textheight=23cm
\textwidth=6.3in
\hoffset=-2cm
\voffset=-2.3cm

\begin{document}

\maketitle

\begin{abstract}

Assign independent weights to the edges of the square lattice, from the uniform distribution on $\{a,b\}$ for some $0<a<b<\infty$. The weighted graph induces a random metric on $\Z^2$. Let $T_n$ denote the distance between $(0,0)$ and $(n,0)$ in this metric. The distribution of $T_n$ has a well-defined median. Itai Benjamini asked in 2011 if the sequence of Boolean functions encoding whether $T_n$ exceeds its median is noise sensitive?
In this paper we present the first progress on Benjamini's problem.
More precisely, we study the minimal weight along any path crossing an $n\times n$-square horizontally and whose vertical fluctuation is smaller than $n^{1/22}$, and show that for this observable, `being above the median' is a noise sensitive property.

{\em Keywords:} First-passage percolation; noise sensitivity; moderate deviations.

{\em Funding:} Research in part supported by the Swedish Research Council through grant 2021-03964.
\end{abstract}

\section{Introduction}\label{Introduction}

Central in statistical physics is the notion of a phase transition, i.e.\ a sudden change of behaviour as some parameter of the model is changed. As a consequence, configurations that correlate well on a microscopic scale may look radically different on a macroscopic scale, if they correspond to different sides of the transition. However, it is also possible for highly correlated configurations to behave differently, despite having the same law.
A formal framework, in the context of Boolean functions, in which questions like this could be studied was introduced in a seminal paper by Benjamini, Kalai and Schramm~\cite{BKS99}. Let $\omega\in\{0,1\}^n$ be chosen uniformly at random, and obtain $\omega^\eps$ from $\omega$ by independently resampling the coordinates with probability $\eps\in(0,1)$. A sequence $(f_n)_{n\ge1}$ of Boolean functions $f_n:\{0,1\}^n\to\{0,1\}$ is said to be {\bf noise sensitive} if for every $\eps>0$
\begin{equation}\label{eq:ns_def}
\E\big[f_n(\omega)f_n(\omega^\eps)\big]-\E[f_n(\omega)]^2\to0\quad\text{as }n\to\infty.
\end{equation}

In~\cite{BKS99}, the authors gave the first example of noise sensitivity, in particular establishing noise sensitivity in planar Bernoulli percolation at criticality. In order to do this, they developed methods by which noise sensitivity could be established, that remain relevant to this day.
Later works have established analogous results for percolation in the continuum, based on Poisson~\cite{ABGM14,QuenchedVoronoi,LPY} and Gaussian processes~\cite{BargmannFock}, as well as in the context of random graphs~\cite{LS15}. For these models, central observables have a Boolean outcome. For many other models in the realm of random spatial processes, the main observables are not Boolean, but real-valued functions on the space of configurations. This is the case for a variety of disordered systems, polymer models, and spatial growth models such as first- and last-passage percolation.

In September 2011, at the doctoral defense of the first author, Itai Benjamini proposed a natural approach to explore the sensitivity to small perturbations of real-valued observables. The approach can be synthesised concisely with the words: \emph{Is `being above the median' a noise sensitive property?} The purpose of this paper is to present the first progress on Benjamini's problem.

\subsection{Model description and main result}

In the most well-studied setting, first-passage percolation is the study of the random metric space that arise by assigning non-negative independent random weights, from some common distribution $F$, to the edges of the $\Z^2$ nearest-neighbour lattice. For simplicity, we shall in this paper stick to the planar setting, and we shall for most of the paper assume that $F$ is uniformly distributed on $\{a,b\}$, for some $0<a<b<\infty$. The edge weights induce a metric $T$ on $\Z^2$ as follows: For $u,v\in\Z^2$, set
$$
T(u,v):=\inf\big\{T(\Gamma):\Gamma\text{ is a path from $u$ to $v$}\big\},\quad\text{where}\quad T(\Gamma):=\sum_{e\in\Gamma}\omega_e.
$$
The infimum in $T(u,v)$ is known to be attained for some finite path, although this path does not have to be unique. We let $\pi(u,v)$ denote this path, and apply some deterministic rule for selecting one in case it is not unique.

For $n\ge1$, set $T_n:=T(0,n{\bf e}_1)$ and $\pi_n:=\pi(0,n{\bf e}_1)$ for brevity, where ${\bf e}_1:=(1,0)$ denote the first coordinate vector. A standard consequence of the Subadditive Ergodic Theorem is the existence of a constant $\mu$, known as the {\bf time constant}, such that almost surely
\begin{equation}\label{eq:timeconst}
    \frac{T_n}{n}\to\mu\quad\text{as }n\to\infty.
\end{equation}
Also, $\pi_n$ is known to be of linear order, although it is not known to have a well-defined asymptotic speed. In approaching the problem of the current paper, one soon requests a finer description of the order of fluctuations, both for $T_n$ around its mean, and $\pi_n$ away from the coordinate axis. Predictions from the physics literature~\cite{karparzha86}, which have been established for related models of last-passage percolation~\cite{BDJ99,J00}, suggest that fluctuations of $T_n$ are order $n^{1/3}$ and transversal fluctuations of $\pi_n$ are order $n^{2/3}$. 

The approach proposed by Itai Benjamini, in September 2011, to explore questions of noise sensitivity in the context of first-passage percolation (and for other real-valued observables) can be described as follows: The distance $T_n$ is a random variable, whose distribution may be unknown. This distribution has a median $m_n$, and for large $n$, this median can be expected to split the distribution of $T_n$ roughly in half, i.e.\ that
\begin{equation}\label{eq:balanced}
\P(T_n<m_n)\approx\P(T_n>m_n)\approx\frac12.
\end{equation}
Under the assumption that the weight distribution $F$ is supported on $\{a,b\}$, for some $0<a<b<\infty$, the event that $T_n$ exceeds its median can be encoded as a Boolean function. If~\eqref{eq:balanced} holds, then this function is non-degenerate. It is thus possible, as proposed by Benjamini, to investigate whether $T_n$ exceeding its median is a noise sensitive property, within the framework of Boolean functions.

In this paper we shall present the first progress on Benjamini's problem. We have not been able to answer the question as it has been formulated above, for reasons that we shall elaborate upon below. As stated the question thus remains open. Indeed, although~\eqref{eq:balanced} trivially holds for continuous weight distributions, it seems to remain unknown whether, uniformly in $n$,
$$
\P(T_n<m_n)>0\quad\text{and}\quad\P(T_n>m_n)>0,
$$
for some median of $T_n$, when $F$ is discrete. See, however,~\cite{C19,DHHX20} for results in this direction.


In order to circumvent the difficulties faced above, we shall make two simplifications to Benjamini's problem. First, we replace point-to-point passage times by horizontal crossing times of squares, and hence increase the symmetry in the problem. Second, we restrict the transversal fluctuations allowed by paths crossing the squares.
Given $k\ge1$, let $\mathcal{P}_k(n)$ denote the set of nearest-neighbour paths contained the `square' $[0,n]\times[0,n-1]$ that connect the left side to the right, and whose vertical displacement is at most $k$, i.e.\ are contained in $[0,n]\times[m,m+k-1]$ for some $m$, and set
\begin{equation}\label{eq:tau}
\tau(n,k):=\inf\big\{T(\Gamma):\Gamma\in\mathcal{P}_k(n)\big\}.
\end{equation}

Our main result is the following theorem, which makes the first progress on Benjamini's problem. Our result is formulated for an arbitrary quantile of the crossing variable, and not just its median.


\begin{theorem}\label{thm:main}
    Suppose that $F$ is uniformly distributed on $\{a,b\}$ for some $0<a<b<\infty$.
    Let $\alpha<1/22$ and $\beta\in(0,1)$ be fixed. For any sequence $(k_n)_{n\ge1}$ such that $k_n\le n^\alpha$, and for any $\beta$-quantile $q_\beta=q_\beta(n)$ of $\tau(n,k_n)$, we have
    $$
    \lim_{n\to\infty}\P\big(\tau(n,k_n)<q_\beta\big)=1-\lim_{n\to\infty}\P\big(\tau(n,k_n)>q_\beta\big)=\beta,
    $$
    and the sequence $(f_n)_{n\ge1}$ of functions $f_n:=\ind_{\{\tau(n,k_n)>q_\beta\}}$ is noise sensitive.
\end{theorem}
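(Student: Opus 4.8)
The plan is to deduce the theorem from two quantitative facts about the law of $\tau(n,k_n)$: an anti-concentration (small-ball) estimate, and a bound on the low-frequency part of its Fourier--Walsh expansion. The first part of the statement is immediate from anti-concentration: writing $q_\beta$ for any $\beta$-quantile we have $\P(\tau(n,k_n)<q_\beta)\le\beta\le\P(\tau(n,k_n)\le q_\beta)$, hence
\[
\beta-\P\big(\tau(n,k_n)=q_\beta\big)\ \le\ \P\big(\tau(n,k_n)<q_\beta\big)\ \le\ \beta,
\]
and symmetrically for $\P(\tau(n,k_n)>q_\beta)$, so both limits follow once $\sup_x\P(\tau(n,k_n)=x)\to0$. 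Since the weights lie in $\{a,b\}$, the variable $\tau(n,k_n)$ takes values in $na+(b-a)\Z$, and this is a statement about the largest atom on that lattice; I would prove it by a resampling argument — expose the weights outside a suitably chosen block of columns and show that re-randomising that block shifts $\tau(n,k_n)$ across $\gg1$ lattice sites with comparable probabilities. Already here one needs input of moderate-deviation type, namely that the crossing time of a block is genuinely spread out rather than concentrated.

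For noise sensitivity I would invoke the Benjamini--Kalai--Schramm criterion: as $f_n$ is monotone, it suffices to show $\sum_e\Inf_e(f_n)^2\to0$, the sum ranging over the $O(nk_n)$ edges inside the square. Now $\Inf_e(f_n)=\P(e\text{ pivotal})$, and since $f_n$ is increasing, $e$ is pivotal only if, in the configuration with $\omega_e$ lowered to $a$, the value $\tau(n,k_n)$ lies in $(q_\beta-(b-a),q_\beta]$ \emph{and} $e$ is used by every optimal crossing there. I would bound this by conditioning on the weights outside a block $J$ of columns around $e$: given that data the contribution of the other columns is frozen, and — by the moderate-deviation/local-limit control described below, applied to the part of the square outside $J$ — the chance that $\tau(n,k_n)$ falls within $O(b-a)$ of the level forced by pivotality is at most $O(\sigma_n^{-1})$, with $\sigma_n$ the fluctuation scale of $\tau(n,k_n)$; the residual, $J$-measurable, requirement that $e$ lie on every optimal crossing of $J$ at that level contributes a further small factor $\rho_n$. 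This gives $\sum_e\Inf_e(f_n)^2\lesssim (nk_n)\,(\sigma_n^{-1}\rho_n)^2$, and the point is to see this tends to $0$. (Equivalently, the same estimates show that $\tau(n,k_n)(\omega)$ and $\tau(n,k_n)(\omega^\eps)$, centred and rescaled, are asymptotically independent standard Gaussians, which together with anti-concentration yields $\E[f_nf_n^\eps]\to(1-\beta)^2$.)

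The technical heart, supplying both the anti-concentration and the factor $\sigma_n^{-1}$, is a package of moderate-deviation estimates for first-passage times in thin strips: Gaussian-type upper tails for $\tau(n,k)$; a matching lower bound showing $\sigma_n^2$ is of order roughly $n/\log n$, the logarithm reflecting the minimum over the $\Theta(n)$ height-$k$ windows available to a crossing path; and a block decomposition showing that, up to boundary-linking corrections of size $O(nk/\ell)$, the crossing time inside a fixed height-$k$ window is a sum of $\Theta(n/\ell)$ i.i.d.\ block times with a regular, nearly Gaussian law. The block length $\ell$ must be taken a suitable power of $k_n$: large enough that a block forgets its entry profile and that the linking corrections are negligible against $\sigma_n$, yet small enough that $n/\ell\to\infty$. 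Tracking how the error terms in these estimates grow with $k_n$, and balancing them against the $\sqrt{n/\log n}$ scale, is exactly what forces $k_n\le n^\alpha$ with $\alpha<1/22$; in the conjectural regime $k\asymp n^{2/3}$ the Gaussian picture collapses and an entirely different argument would be required.

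The step I expect to be the main obstacle is establishing these moderate-deviation and local-limit statements \emph{uniformly} in the slowly growing parameter $k_n$: in particular the quantitative small-ball bound $\sup_x\P(\tau(n,k_n)=x)\lesssim\sigma_n^{-1}$, stable under conditioning on a single edge, together with the sharp per-edge pivotality bound — the assertion that an edge is pivotal only when the local environment around it sits in a rare ``resonant'' state. Granting these, the BKS criterion and the quantile identity follow from the bookkeeping indicated above.
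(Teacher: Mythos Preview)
Your high-level plan (anti-concentration for the quantile identity, BKS influence criterion for noise sensitivity, with moderate deviations as the engine) matches the paper's, but the execution has two genuine gaps.

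First, the edge count is wrong: $\tau(n,k_n)$ is a crossing time of the full $n\times n$ square (only the \emph{paths} are restricted to height-$k_n$ windows, which may sit at any height), so there are $O(n^2)$ relevant edges, not $O(nk_n)$. With that correction, your bound $\sum_e\Inf_e(f_n)^2\lesssim n^2(\sigma_n^{-1}\rho_n)^2$ does not close: since $\sigma_n\le C\sqrt{n}$, you would need $\rho_n=o(n^{-1/2})$, and the vague ``residual, $J$-measurable, requirement that $e$ lie on every optimal crossing'' gives no reason to believe that.

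Second, and more importantly, you are trying to control the small-ball probability of $\tau(n,k_n)$ itself at the level $q_\beta$, whereas the paper reduces everything to the small-ball probability of the \emph{single-rectangle} crossing time $t(n,k_n)$ at $q_\beta$. The point is that if $e$ is pivotal, then one of the at most $k_n$ height-$k_n$ rectangles containing $e$ must have crossing time in $[q_\beta-(b-a),q_\beta)$, so by a union bound
\[
\Inf_e(f_n)\;\le\;2k_n\,\P\big(t(n,k_n)\in[q_\beta-(b-a),q_\beta)\big).
\]
Now the crucial observation you are missing: because $\tau$ is the minimum of $\sim n$ translates of $t(n,k_n)$, its $\beta$-quantile $q_\beta$ sits roughly $\sqrt{2\log n}$ standard deviations \emph{below} $\E[t(n,k_n)]$. (The paper localises $q_\beta$ by comparing to the disjoint-rectangle minimum $\tau^\star\ge\tau$, whose quantiles it computes explicitly via moderate deviations.) In that far lower tail the Gaussian density contributes an extra factor $e^{-d^2/2}\asymp 1/n$, so the moderate-deviation estimate gives $\P(t(n,k_n)\in[q_\beta-(b-a),q_\beta))=o(n^{-23/22})$, hence $\Inf_e(f_n)=k_n\cdot o(n^{-23/22})=o(1/n)$ for $k_n\le n^{1/22}$, and $\sum_e\Inf_e^2=O(n^2)\cdot o(n^{-2})=o(1)$. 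Your proposal treats $q_\beta$ as if it sat in the bulk of the relevant law and therefore misses this $1/n$ tail factor; the bulk small-ball bound $O(\sigma_n^{-1})$ is too weak by a full power of $n$. The conditioning/resampling scheme you describe is not needed and, as written, would not deliver the required rate.
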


\begin{remark}
In this paper we have chosen to work with weight distributions that are uniform on $\{a,b\}$, for some $0<a<b<\infty$. Extending our main result beyond distributions with two-valued support would require different techniques, as the method of influences used here is currently restricted to this setting. However, the argument extends to non-uniform distributions and dimensions $d\ge2$, although with a stronger restriction on the growth of $k_n$, and the additional condition that $F(a)<\vec{p}_c(d)$, where $\vec{p}_c(d)$ is the critical probability for oriented percolation on $\Z^d$. The latter condition is believed to be superfluous (as mentioned in the appendix), but required for the lower bound in Lemma~\ref{lma:CD_variance}.
Let us finally mention that the analogous result holds for the square replaced by an $n\times n$-torus, and $\tau(n,k_n)$ replaced by the minimal weight among all circuits crossing the torus horizontally, and whose transversal fluctuations are bounded by $k_n$.
Adapting the proof of Theorem~\ref{thm:main} is straightforward, and not presented here.
\end{remark}

Related to the study of noise sensitivity is the notion of `chaos', that stems from the physics literature on spin-glasses~\cite{bramoo87,fishus86}. In the context of first-passage percolation, chaos refers to the sensitivity of the distance-minimising path $\pi_n$ as opposed to the distance $T_n$. The first rigorous evidence of chaos was obtained by Chatterjee in two preprints~\cite{Chatterjee2008,Chatterjee2009DisorderCA}, later combined into a book~\cite{Chatterjee_2014}. That the first-passage metric is chaotic was established only recently, in work of Ahlberg, Deijfen and Sfragara~\cite{ADS1}. To state this result, let $\pi_n^\eps$ denote the distance-minimising path between the origin and $n{\bf e}_1$ with respect to the perturbed weights $\omega^\eps$. In the setting when $F$ is continuous and has finite moment of order $2+\log$, for instance, it was proved in~\cite{ADS1} that for $\eps=\eps(n)\gg\frac1n\Var(T_n)$ we have
$$
\E\big[|\pi_n\cap\pi_n^\eps|\big]=o(n).
$$
A significantly more detailed analysis was carried out by Ganguly and Hammond~\cite{Ganguly2020StabilityAC}, showing that the transition from stability to chaos indeed occurs at $\eps\asymp\frac1n\Var(T_n)$ for a certain integrable model of last-passage percolation. Their model is known to obey KPZ-scaling, which establishes a transition at $\eps\asymp n^{-1/3}$; see also~\cite{ADS2} for a related result.

Our result differ from the above in that it addresses the sensitivity of the metric $T$ as opposed to the structure minimising $T$, and (to our knowledge) this result is the first of its kind for a (supercritical) spatial growth model. Note, however, related work of Damron, Hanson, Harper and Lam~\cite{DHHL23} that establish the existence of exceptional times in a dynamical version of {\em critical} first-passage percolation.

\subsection{A tale of influences}

A key result from the original paper of Benjamini, Kalai and Schramm~\cite{BKS99} gives a criterion for a sequence of Boolean functions to be noise sensitive in terms of the notion of influences. The influence of bits is central in computer science, and has its origin in social choice theory. The {\bf influence} of a bit $i\in\{1,2,\ldots,n\}$ of a function $f:\{0,1\}^n\to\{0,1\}$ is defined as the probability, under the uniform measure on $\{0,1\}^n$, that the bit is decisive for the outcome of the function, i.e.\
\begin{equation}\label{eq:inf_def}
\Inf_i(f):=\P\big(f(\omega)\neq f(\sigma_i\omega)\big),
\end{equation}
where $\sigma_i:\{0,1\}^n\to\{0,1\}^n$ is the operator that flips the entry at position $i$.
The criterion, which has come to be known as the BKS Theorem, states that if
\begin{equation}\label{eq:inf_cond}
    \sum_{i=1}^n\Inf_i(f_n)^2\to0\quad\text{as }n\to\infty,
\end{equation}
then the sequence $(f_n)_{n\ge1}$ is noise sensitive.\footnote{The BKS Theorem was in~\cite{BKS99} proved for the uniform measure on $\{0,1\}^n$, but remains true for biased measures~\cite{ABGM14,KK13}.}

Apart from the computation of influences, as the BKS Theorem invites to, there are other methods by which noise sensitivity may be established. The main development has occurred with applications to Bernoulli percolation in mind: A method involving the revealment of randomised algorithms was developed by Schramm and Steif~\cite{SS10}; The Fourier spectrum of critical percolation was analysed by Garban, Pete and Schramm~\cite{GPS10}; A probabilistic approach was taken by Tassion and Vanneuville~\cite{TV}, inspired by Kesten's scaling relations. Neither of these routes seem easy to follow in our context.
Moreover, for monotone functions (which we are concerned with here) the criterion in~\eqref{eq:inf_cond} is both necessary and sufficient for a sequence to be noise sensitive. So, either directly or indirectly, verifying~\eqref{eq:inf_cond} is inevitable. This will, hence, be the route we take.

Let us start with a general observation. For functions $f:\{0,1\}^n\to\R$ that are Lipschitz, i.e.\ have bounded differences $|f(\omega)-f(\sigma_i\omega)|\le c$ for some constant $c>0$ and all $i$, if changing the value of a bit $i$ takes $f$ from below to above its median $m$ (or vice versa), then $f$ must have been within distance $c$ from the median. In particular, we have the distributional bound
\begin{equation}\label{eq:dist_bound}
\Inf_i(\ind_{\{f>m\}})\le\P\big(|f-m|\le c\big).
\end{equation}
Standard variance bounds for functions that are Lipschitz (i.e.\ having bounded differences) with constant $c$ give $\Var(f)\le c^2n$; see~\cite[Corollary~3.2]{BLM13}. Hence, one should expect $f$ to have fluctuations at scale no larger than $\sqrt{n}$, and the above distributional bound to give an upper bound on the influences which is of order $1/\sqrt{n}$ at best. This would amount to an upper bound on the sum of influences squared being a non-vanishing constant. This heuristic suggests that it cannot in general be sufficient to bound the influences simply considering the distribution of $f$.

Note that, regardless if we consider $T_n$ or $\tau(n,k)$, changing the value of an edge may affect the observable by at most $\pm(b-a)$, meaning that they are both Lipschitz. Hence, a simple distributional bound as in~\eqref{eq:dist_bound} will not suffice.
Using an observation from~\cite{BKS03}, we may link influential edges to edges on the geodesic. Recall that $\pi_n$ is the path (a path in case of multiple) attaining the infimum in $T_n$.
Then,
\begin{equation}\label{eq:inf_firstbound}
\Inf_e(\ind_{\{T_n>m_n\}})=2\,\P\big(\omega_e=a,e\text{ pivotal}\big)\le2\,\P\big(e\in\pi_n,|T_n-m_n|\le b-a\big),
\end{equation}
since if $e$ is pivotal\footnote{The edge $e$ is \emph{pivotal} if changing the value of $\omega_e$ changes the outcome of the (Boolean) function in question.} and $\omega_e=a$, then $e$ belongs to every geodesic for $T_n$, and hence $e\in\pi_n$. The predictions from KPZ universality suggest that $|T_n-m_n|\le b-a$ should occur with probability order $n^{-1/3}$, and that a typical edge being on the geodesic has probability order $n^{-2/3}$. For \emph{most} edges within distance $n^{2/3}$ or the coordinate axis the influence is thus order $1/n$, and for edges further away it is negligible. This amounts to a bound on the sum of influences squared that vanishes with $n$.

The above heuristic is merely conjectural, and we are nowhere close to establish statements like this in first-passage percolation. It is generally not even known whether
$$
\P\big(|T_n-m_n|\le b-a\big)\to0\quad\text{as }n\to\infty
$$
for some median $m_n$ of $T_n$. However, a result by Pemantle and Peres~\cite{PP94} implies such a statement for exponentially distributed edge weights.

In a first attempt to simplify Benjamini's problem it is tempting to replace the point-to-point passage times with the left-right crossing times of rectangles.
Let $\mathcal{P}(n,m)$ denote the set of nearest-neighbour paths contained in the rectangle $R(n,m):=[0,n]\times[0,m-1]$ that connect the left side to the right. We define the crossing time of the rectangle $R(n,m)$ as
\begin{equation}\label{eq:t_rect}
t(n,m):=\inf\big\{T(\Gamma):\Gamma\in\mathcal{P}(n,m)\big\}.
\end{equation}
In particular, we let $t_n:=t(n,n)$ denote the crossing time of the `square' $R(n,n)$. The increasing level of symmetry attained in this way is manifested in that all horizontal and all vertical bonds of the square an effect of $t_n$ that is roughly equal.\footnote{Admittedly, the square may have to be replaced by a torus, and the crossing time of the square by the circumference of the torus, for this to be fully rigorous.}
From the linear upper bound on the length of a geodesic due to Kesten~\cite{kesten_1980}, a bound analogous to~\eqref{eq:inf_firstbound} would give
$$
\Inf_e(\ind_{\{t_n>m_n\}})\le C\,\frac1n\,\P\big(|t_n-m_n|\le b-a\big),
$$
and hence
\begin{equation}\label{eq:infsq_bound}
\sum_e\Inf_e(\ind_{\{t_n>m_n\}})^2\le C\,\P\big(|t_n-m_n|\le b-a\big)^2.
\end{equation}
This shows that even if we spread out the contribution coming from `being on the geodesic', only considering the contribution from the geodesic, and not the distributional properties of the crossing time, will not suffice in order to deduce noise sensitivity.

Again, it remains unknown whether the probability in the right-hand side of~\eqref{eq:infsq_bound} vanishes as $n\to\infty$. In fact, also the weaker question whether the variance of $t_n$ diverges as $n\to\infty$ remains unknown; the best lower bound gives a constant. We refer the reader to the recent work of Damron, Houdr\'e and \"Ozdemir~\cite{damronfluctuation} for a further discussion in this direction.

\subsection{Distributional control over restricted paths}

We shall circumvent the above mentioned difficulties in calculating the influences by imposing a restriction on the transversal fluctuations of the paths admissible for crossing the square $R(n,n)$.

The restriction on transversal fluctuations does not result in a lower asymptotic velocity by which the square is crossed, as long as the allowed fluctuations diverge with $n$; see~\cite{ahl15,Chatterjee2009CentralLT}. That is, for any diverging sequence $(k_n)_{n\ge1}$ we have, almost surely,
$$
\mu=\lim_{n\to\infty}\frac{T_n}{n}=\lim_{n\to\infty}\frac{t_n}{n}=\lim_{n\to\infty}\frac{\tau(n,k_n)}{n}.
$$
However, it is expected that the restriction does have an effect on a lower order. For $k$ fixed, on the other hand, one may show that there exists $\mu_k>\mu$ such that almost surely
$$
\lim_{n\to\infty}\frac{\tau(n,k)}{n}=\mu_k.
$$

To see how the transversal restriction will help in calculating the influences, let us consider the case when $k=1$. Note that $\tau(n,1)$ is the sum of $n$ independent binomial random variables with parameters $n$ and $1/2$. Using moderate deviation estimates of the binomial distribution we are able to compute the asymptotic behaviour of the quantiles of their minimum as well as the influences. 
Note how the case $k=1$ is reminiscent of the classical Tribes function, introduced by Ben-Or and Linial~\cite{BL85}, but with polynomial-sized tribes as opposed to logarithmic. We treat the case $k=1$ in detail in Section~\ref{sectribes}, as special case of a larger family of polynomial Tribes functions.

For $k\ge2$ we may express $\tau(n,k)$ as the minimum of $n-k+1$ identically distributed, but dependent, variables as follows. Let $R_i(n,k)$ denote the rectangle $[0,n]\times[i,i+k-1]$, and $t_i(n,k)$ the horizontal crossing time of $R_i(n,k)$. Since every path in $\mathcal{P}_k(n)$ may fluctuate vertically at most $k$, it has to be contained in $R_i(n,k)$ for some $i=0,1,\ldots,n-k$. It follows that
$$
\tau(n,k)=\min_{i=0,1,\ldots,n-k}t_i(n,k).
$$

For fixed $k$, the distribution of these variables is asymptotically Gaussian, as proved (in parallel) by Ahlberg~\cite{ahl15} and Chatterjee and Dey~\cite{Chatterjee2009CentralLT}. In fact, the latter paper shows that the asymptotic normality continues to hold for $k=k(n)$ growing slower than $n^{1/3}$. The asymptotic normality will not be sufficient in itself, as we will need to peak into the tail of the distribution, in that $\tau(n,k)$ is a minimum of a large number of variables.
For that reason, we shall need to combine the approach from~\cite{Chatterjee2009CentralLT} with a Cram\'er-type moderate deviations theorem for triangular arrays (Theorem~\ref{ldthm2}), in order to obtain a moderate deviations theorem for first-passage percolation across thin rectangles (Theorem~\ref{thm:mdfpp}). With the moderate deviations estimates at hand, we will be able to approximate the asymptotic behaviour of quantiles and influences for the restricted crossing time $\tau(n,k)$, and prove Theorem~\ref{thm:main}.

We remark that the asymptotic normality is in itself not central to our approach. The relevant part is that it allows us to bound the influence of an edge by a rare enough event, whose probability we may compute.
As a by-product of our proof we obtain the following estimate on the fluctuations on $\tau(n,k)$.

\begin{theorem}\label{thm:fluctuations}
Suppose that $F$ is uniformly distributed on $\{a,b\}$ for some $0<a<b<\infty$.
For any $\alpha<1/22$ and any sequence $(k_n)_{n\ge1}$ such that $k_n\le n^\alpha$ we have
$$
\sup_{x\ge0}\P\big(\tau(n,k_n)\in[x,x+c]\big)=o\Big(\frac{1}{n^{1/22}}\Big).
$$
\end{theorem}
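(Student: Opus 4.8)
The plan is to derive the anti-concentration bound from the moderate deviations theorem for the crossing times of thin rectangles (Theorem~\ref{thm:mdfpp}) together with the representation $\tau(n,k_n)=\min_{i=0,\ldots,n-k_n}t_i(n,k_n)$. The key point is that $\tau(n,k_n)$ is a minimum of roughly $n$ approximately Gaussian variables, so its distribution is concentrated in a window of width order $\sigma_n/\sqrt{\log n}$ around a location shifted by order $\sigma_n\sqrt{\log n}$ below the common mean, where $\sigma_n$ is the standard deviation of a single $t_i(n,k_n)$. Since $\sigma_n\asymp\sqrt{n}$ (up to the slowly varying corrections coming from $k_n\le n^\alpha$), the relevant window has width order $\sqrt{n/\log n}$, and the probability of landing in any fixed-length subinterval of it should be of order $\sqrt{\log n}/\sqrt n$, which is comfortably $o(n^{-1/22})$ once $\alpha<1/22$ is used to control the error terms in the moderate deviations expansion.

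The steps I would carry out are as follows. First, fix $x\ge0$ and write
$$
\P\big(\tau(n,k_n)\in[x,x+c]\big)=\P\big(\tau(n,k_n)\ge x\big)-\P\big(\tau(n,k_n)\ge x+c\big).
$$
Second, use independence of well-separated rectangles: although the $t_i(n,k_n)$ are dependent, the crossing times of rectangles $R_i$ and $R_j$ with $|i-j|\ge k_n$ are independent, so one can extract a subfamily of $m_n\asymp n/k_n$ mutually independent crossing times. This gives
$$
\P\big(\tau(n,k_n)\ge y\big)\le\P\big(t_0(n,k_n)\ge y\big)^{m_n},
$$
and a matching lower bound $\P(\tau(n,k_n)\ge y)\ge\P(t_0(n,k_n)\ge y)^{n-k_n+1}$ by a union bound on the complement. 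Third, feed the moderate deviations estimate of Theorem~\ref{thm:mdfpp} into the bound $\P(t_0(n,k_n)\ge y)=\exp(-(1+o(1))(y-\mu_n)^2/2\sigma_n^2)$ valid in the relevant moderate deviations range, so that $\P(\tau(n,k_n)\ge y)$ is pinned, up to subpolynomial factors, to a function of $(y-\mu_n)/\sigma_n$ that transitions from near $1$ to near $0$ over a $y$-window of width $\Theta(\sigma_n/\sqrt{\log m_n})$. Differencing at $y=x$ and $y=x+c$ then yields the claimed $o(n^{-1/22})$ bound after inserting $\sigma_n=\Theta(\sqrt n)$ (using $\alpha<1/22$ to absorb the error exponents).

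The main obstacle is quantitative control of the moderate deviations approximation uniformly over the whole range of $y$ that matters for the minimum. Since the minimum of $n/k_n$ variables probes the Gaussian tail out to roughly $\sqrt{\log n}$ standard deviations, I need Theorem~\ref{thm:mdfpp} to be accurate (with multiplicative $e^{o(\cdot)}$ error) on that entire range, and the error terms there degrade as $k_n$ grows — this is precisely where the constraint $\alpha<1/22$ enters and must be tracked carefully. A secondary technical point is that the difference $\P(\tau\ge x)-\P(\tau\ge x+c)$ can only be shown small when $x$ lies in the concentration window; for $x$ far below the window both probabilities are $1+o(1)$ and for $x$ far above both are $o(1)$ at a polynomial rate, so one must split into these three regimes and check that in the tails the individual probabilities (not just their difference) are already $o(n^{-1/22})$, which again follows from the moderate deviations estimate but requires a short case analysis. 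Once these pieces are in place, the theorem follows by combining the three regimes.
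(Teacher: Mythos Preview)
Your overall plan---representing $\tau$ as $\min_i t_i$, extracting independence from well-separated rectangles, and feeding in Theorem~\ref{thm:mdfpp}---is the right one, and it matches the paper. The gap is in the central ``differencing'' step. You propose to sandwich $\P(\tau\ge y)$ between $(1-p(y))^{m_n}$ with $m_n\asymp n/k_n$ (from disjoint rectangles) and $(1-p(y))^{n-k_n+1}$ (this follows from FKG, not from a union bound on the complement as you say), and then subtract. But these two exponents differ by a polynomial factor $k_n$: at any $y$ where one of the bounds is bounded away from $0$ and $1$, the other is already exponentially close to $0$ or $1$. So the sandwich does \emph{not} ``pin $\P(\tau\ge y)$, up to subpolynomial factors, to a function of $(y-\mu_n)/\sigma_n$'', and subtracting the upper bound at $x$ from the lower bound at $x+c$ gives something of order $1$ throughout the concentration window. (Also, $\sigma_n=\Theta(\sqrt n)$ is not known; Lemma~\ref{lma:CD_variance} only gives $cn/k_n\le\sigma_n^2\le Cn$.)

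The paper avoids this by bounding the interval probability directly rather than via a difference of survival functions. If $\tau\in[x-c,x]$ then some $t_i$ lies in $[x-c,x]$ while all others are at least $x-c$; choosing $\lfloor n/k\rfloor-1$ rectangles disjoint from the $i$th one and from each other gives the single inequality
\[
\P\big(\tau(n,k)\in[x-c,x]\big)\;\le\; n\,\P\big(t(n,k)\in[x-c,x]\big)\,\P\big(t(n,k)\ge x-c\big)^{\lfloor n/k\rfloor-1}.
\]
Now only two regimes are needed, split at $y=\E[t]-\sigma\sqrt{2\beta\log n}$ with $\beta=21/22$: for $x>y$ the last factor decays super-polynomially (since $\beta<1-\alpha$), while for $x\le y$ the moderate deviations estimate gives $\P(t\in[x-c,x])=O(n^{-(1+1/22)}/\sqrt{\log n})$, and the error term in Theorem~\ref{thm:mdfpp} is what forces $\alpha<1/22$ here. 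Your three-regime argument can be repaired along the same lines---replace the differencing in the window by the crude union bound $\P(\tau\in[x,x+c])\le n\,\P(t_0\in[x,x+c])$, which already suffices there---but as written the window step does not go through.
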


As a corollary, it follows from the above theorem that the probability that $\tau(n,k_n)$ is found in an interval of length $O(n^{1/22})$ is $o(1)$, and hence that for any $c<\infty$ and large enough $n\ge1$ we have $\Var\big(\tau(n,k_n)\big)\ge cn^{1/11}$ for any sequence $(k_n)_{n\ge1}$ satisfying the assumption of the theorem.

While we here focus on weight distributions supported on two points, we remark that our proof of Theorem~\ref{thm:fluctuations} goes through without change for weight distributions supported on $[a,b]$.
Apart from a result by Pemantle and Peres~\cite{PP94} for exponentially distributed edge weights, it remains unknown whether for every $c>0$ we have
\begin{equation}\label{eq:anticoncentration}
\sup_{x\ge0}\P\big(T_n\in[x,x+c]\big)\to0\quad\text{as }n\to\infty.
\end{equation}
It would be interesting to establish~\eqref{eq:anticoncentration} for a larger class of weight distributions.\footnote{Since the first appearance of this manuscript, Elboim~\cite{elboim} has established~\eqref{eq:anticoncentration} for weight distributions that are absolutely continuous with respect to Lebesgue measure. For discrete weight distributions it remains an open problem.}

The analogous problem for geodesics is the well-known `midpoint problem', which was posed by Benjamini, Kalai and Schramm in~\cite{BKS03}. Interestingly, this problem has been solved for continuous weight with finite mean by Ahlberg and Hoffman~\cite{AH}. Their result shows that for every edge $e$ we have
\begin{equation}\label{eq:midpoint}
\P\big(e\in\pi(-n{\bf e}_1,n{\bf e}_1)\big)\to0\quad\text{as }n\to\infty.
\end{equation}
Earlier work of Damron and Hanson~\cite{DH17} gave a conditional proof under plausible, but unverified, assumptions on the asymptotic shape. In more recent work, Dembin, Elboim and Peled~\cite{DEP} derive polynomial rates on the decay in~\eqref{eq:midpoint} for a more restrictive class of weight distributions. However, as mentioned above, without progress on the problem in~\eqref{eq:anticoncentration}, these results are insufficient for making further progress on Benjamini's problem.

\subsection{Organisation of the paper}

The rest of this paper is organised as follows. In Section~\ref{sectribes} we showcase out approach by considering a polynomial version of the classical Tribes function. In Section~\ref{sec:generalMD} we prove a Cram\'er-type moderate deviations theorem for triangular arrays. In Section~\ref{SecDistribution} we apply the moderate deviations theorem to prove a moderate deviations theorem for first-passage percolation across thin rectangles, which will allow us to analyse the asymptotic behaviour of the quantiles of our main observable. In Section~\ref{SecSideThm} we derive a preliminary version of our main theorem, in which we consider the minimum crossing time across disjoint rectangles. Finally, in Section~\ref{sec:main}, we prove our main results, and in Section~\ref{sec:conjectures} we elaborate upon some open problems.

\section{Polynomial Tribes}\label{sectribes}

In this section we illustrate our method in a simplified context. We shall prove that `being above the median' is a noise sensitive property for a class of functions that generalises the classical function known as Tribes, introduced in~\cite{BL85}. Since the classical Tribes function is a standard example in the analysis of Boolean functions, see e.g.~\cite{garban_steif_book}, and since our generalisation interpolates between the classical Tribes and the standard Majority function, we think that carrying out the analysis in some detail may be of independent interest. The reader should note that this section is independent of the rest of the paper, and can be thought of as a mere illustration of the general approach. However, there is a connection to first-passage percolation, as a special case of the class of functions introduced in this section coincides with $\tau(n,k)$ for $k=1$, i.e.\ when vertical displacement is not allowed, as explained below.

For every $\lambda \in (0,1)$ we partition $[n]:=\{1,2,\ldots,n\}$ into blocks of length $\ell_\lambda:=\floor{n^{\lambda}}$, and perhaps some leftover debris. We refer to each block as a \emph{tribe}. Given $\omega\in\{0,1\}^{[n]}$, sampled uniformly, we define $S_j$ as the sum of the coordinates of the $j$th tribe, for each of the $m_\lambda:=\floor{n/\ell_\lambda}$ tribes. Finally, let
\begin{equation}\label{GenTribes}
    S^{\lambda}:=\max_{1\leq j \leq m_\lambda} S_{j}
\end{equation}
denote the maximal number of 1s in any tribe. Note that we have suppressed the dependence on $n$ in the above notation. Note, moreover, that the choice $\lambda=1/2$ and $n=m^2$ corresponds to the weight of a left-right crossing of an $m\times m$-square among paths with no vertical displacement. The connection can be realised by interpreting $S^{1/2}$, for $n=m^2$, as the number of low-weight edges it is possible to use traversing an $m\times m$-square along a straight line, and hence $\tau(m,1)=aS^{1/2}+b(m-S^{1/2})$.

For each $\beta\in(0,1)$, let $q_{\lambda,\beta}$ denote any $\beta$-quantile of (the distribution of) $S^\lambda$. Define $f_{\lambda,\beta}$ to be the indicator function of the event $\{S^\lambda>q_{\lambda,\beta}\}$, i.e.\ that at least one tribe contains more than $q_{\lambda,\beta}$ 1s.

Naturally, our idea will be to study the behavior of $\P(f_{n,\beta}^{\lambda}=0)$, and if the sequence of functions $\{f_{n,\beta}^{\lambda}\}$ is noise sensitive. More precisely, we get the following result.

\begin{proposition}\label{maintribesthm}
For every $\lambda,\beta\in(0,1)$ we have, as $n\to\infty$, that $f_{\lambda,\beta}$ is noise sensitive and
$$
\P(f_{\lambda,\beta}=0)\to\beta.
$$
\end{proposition}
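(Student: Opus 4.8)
The plan is to reduce both assertions to standard large- and moderate-deviation estimates for the symmetric binomial distribution. Write $\ell:=\ell_\lambda=\floor{n^\lambda}$ and $m:=m_\lambda=\floor{n/\ell}$, so that $S^\lambda=\max_{1\le j\le m}S_j$ with $S_1,\dots,S_m$ independent copies of $\mathrm{Bin}(\ell,1/2)$, and note $\ell\to\infty$, $m\to\infty$ and $\ln m\asymp\ln n$. Everything rests on the identity $\P(S^\lambda\le q)=(1-p_q)^{m}$ with $p_q:=\P(\mathrm{Bin}(\ell,1/2)>q)$, together with knowing where a $\beta$-quantile $q_{\lambda,\beta}$ lies. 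Since $\P(S^\lambda\le q_{\lambda,\beta})\ge\beta$, the identity gives $p_{q_{\lambda,\beta}}\le 1-\beta^{1/m}\le\ln(1/\beta)/m$. Hoeffding's inequality $p_{\ell/2+t}\le e^{-2t^2/\ell}$ shows $\P(S^\lambda\le\ell/2+C\sqrt{\ell\ln m})\to1$ for $C$ a large constant, so $q_{\lambda,\beta}=\ell/2+s$ with $s\le C\sqrt{\ell\ln m}$; and the central limit theorem shows $p_q$ stays bounded away from $0$ whenever $q\le\ell/2+O(\sqrt\ell)$, which forces $s/\sqrt\ell\to\infty$. Hence in the relevant regime $\sqrt\ell=o(s)$, $s=O(\sqrt{\ell\ln m})$ and $m\,p_{q_{\lambda,\beta}}=O(1)$.

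The one substantial input is a moderate-deviation bound on a single binomial point mass: uniformly over $q=\ell/2+s$ with $\sqrt\ell=o(s)$ and $s=o(\ell)$, one has $\P(\mathrm{Bin}(\ell,1/2)=q)\le C'(s/\ell)\,\P(\mathrm{Bin}(\ell,1/2)\ge q)$. I would prove this by bounding the tail $\P(\mathrm{Bin}(\ell,1/2)\ge q)=\sum_{k\ge q}\P(\mathrm{Bin}(\ell,1/2)=k)$ from below by the sum of its first $\asymp\ell/s$ terms, each of which is a fixed fraction of $\P(\mathrm{Bin}(\ell,1/2)=q)$: consecutive point probabilities change by the factor $\tfrac{\ell-k}{k+1}$, which equals $1-\Theta(s/\ell)$ near $q$, and since $\sqrt\ell=o(s)$ guarantees $\ell/s=o(s)$ this factor stays $1-\Theta(s/\ell)$ over all $\asymp\ell/s$ of these steps, so the accumulated factor stays bounded below. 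Feeding in $\P(\mathrm{Bin}(\ell,1/2)\ge q_{\lambda,\beta})=p_{q_{\lambda,\beta}-1}=(1+o(1))p_{q_{\lambda,\beta}}=O(1/m)$ and $s=O(\sqrt{\ell\ln m})$ then yields $\P(\mathrm{Bin}(\ell,1/2)=q_{\lambda,\beta})=O\big(\sqrt{\ln m}/(m\sqrt\ell)\big)$. The quantile claim follows at once: $\P(S^\lambda=q_{\lambda,\beta})=(1-p_{q_{\lambda,\beta}})^m-(1-p_{q_{\lambda,\beta}-1})^m$, and since $p_{q_{\lambda,\beta}-1}-p_{q_{\lambda,\beta}}=\P(\mathrm{Bin}(\ell,1/2)=q_{\lambda,\beta})$ --- so that $m$ times this quantity is $O(\sqrt{\ln m/\ell})\to0$, using $\ell=n^\lambda\gg\ln n$ --- while both $p_{q_{\lambda,\beta}}$ and $p_{q_{\lambda,\beta}-1}$ are $O(1/m)$, this difference tends to $0$; hence $\P(S^\lambda=q_{\lambda,\beta})\to0$ and $\P(f_{\lambda,\beta}=0)=\P(S^\lambda\le q_{\lambda,\beta})\to\beta$.

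For noise sensitivity I would apply the BKS Theorem: $f_{\lambda,\beta}=\ind_{\{S^\lambda>q_{\lambda,\beta}\}}$ is monotone, so it suffices to check $\sum_i\Inf_i(f_{\lambda,\beta})^2\to0$. A bit $i$ lying in tribe $j$ is pivotal exactly when the sum of the other $\ell-1$ bits of tribe $j$ equals $q_{\lambda,\beta}$ and every other tribe sum is at most $q_{\lambda,\beta}$, so by independence $\Inf_i(f_{\lambda,\beta})=\P(\mathrm{Bin}(\ell-1,1/2)=q_{\lambda,\beta})\,(1-p_{q_{\lambda,\beta}})^{m-1}\le\P(\mathrm{Bin}(\ell,1/2)=q_{\lambda,\beta})$, using $(1-p_{q_{\lambda,\beta}})^{m-1}\le1$ and $\P(\mathrm{Bin}(\ell,1/2)=q)=\tfrac12\P(\mathrm{Bin}(\ell-1,1/2)=q)+\tfrac12\P(\mathrm{Bin}(\ell-1,1/2)=q-1)\ge\P(\mathrm{Bin}(\ell-1,1/2)=q)$ for $q$ past the mean. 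Since the $n-m\ell<\ell$ leftover bits belonging to no tribe have influence $0$,
\[
\sum_i\Inf_i(f_{\lambda,\beta})^2\le m\ell\cdot\P\big(\mathrm{Bin}(\ell,1/2)=q_{\lambda,\beta}\big)^2=O\Big(m\ell\cdot\frac{\ln m}{m^2\ell}\Big)=O\Big(\frac{\ln m}{m}\Big)\to0,
\]
because $m\asymp n^{1-\lambda}\to\infty$.

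I expect the main obstacle to be the moderate-deviation bound on $\P(\mathrm{Bin}(\ell,1/2)=q)$ above: the point is not that this probability is small --- it is trivially at most the tail $p_{q-1}=O(1/m)$ --- but that it is smaller by the extra factor $s/\ell\asymp\sqrt{\ln m/\ell}$, which is precisely what is needed to beat the $m\ell\asymp n$ summands in $\sum_i\Inf_i^2$. This is the elementary incarnation of the phenomenon, discussed around~\eqref{eq:dist_bound}--\eqref{eq:infsq_bound}, that a bare distributional bound on a Lipschitz observable never quite suffices; and it is exactly this step that, for the genuine first-passage observable $\tau(n,k)$ with $k\ge2$ --- where the rectangle crossing times are neither binomial nor independent --- must be upgraded to the Cram\'er-type moderate deviations theorem for triangular arrays and the ensuing moderate deviations theorem for first-passage percolation across thin rectangles developed in the later sections. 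Everything else --- the quantile asymptotics, the pivotality computation, and the appeal to the BKS Theorem --- is then routine.
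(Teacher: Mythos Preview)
Your proof is correct. It takes a somewhat different route from the paper's, and the difference is worth noting.

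The paper quotes Bahadur's precise moderate-deviation asymptotics~\eqref{eq:bin1}--\eqref{eq:bin2} to \emph{explicitly} locate the quantile: it defines a candidate $s_{\lambda,\beta}$ in closed form, proves $\P(S^\lambda\le s_{\lambda,\beta})\to\beta$ (Lemma~\ref{case1conv}), and then sandwiches any $\beta$-quantile between $s_{\lambda,\beta-\eps}$ and $s_{\lambda,\beta+\eps}$ (Lemma~\ref{tribesquantile}). The influence is then computed directly from~\eqref{eq:bin2}, yielding the sharp order $\Inf_1(f_{\lambda,\beta})\asymp\sqrt{\log n}/n^{1-\lambda/2}$.

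You instead bracket the quantile softly --- Hoeffding for the upper bound $s=O(\sqrt{\ell\ln m})$, the CLT to force $s/\sqrt\ell\to\infty$ --- and then derive from scratch the one estimate you actually need, namely that the point mass at $q$ is smaller than the tail by a factor $\Theta(s/\ell)$. Your ratio argument (summing the first $\asymp\ell/s$ terms of the tail, each a bounded fraction of the top term since $\ell/s=o(s)$) is an elementary proof of exactly the ratio $\eqref{eq:bin2}/\eqref{eq:bin1}\asymp x/\sqrt\ell$ that the paper imports from Bahadur. The rest of your argument --- showing $\P(S^\lambda=q_{\lambda,\beta})\to0$ via $m(p_{q-1}-p_q)\to0$, and bounding $\Inf_i$ by $\P(\mathrm{Bin}(\ell,1/2)=q_{\lambda,\beta})$ --- is the same as the paper's in spirit, only with inequalities in place of asymptotic equalities.

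What each buys: the paper's approach gives the exact location of the quantile and the exact order of the influences, which is informative but not needed for the proposition; your approach is self-contained (no external moderate-deviation reference) and isolates cleanly the one non-trivial ingredient --- the extra $s/\ell$ factor separating the point mass from the tail --- which, as you correctly observe in your final paragraph, is precisely the step that must be upgraded to Theorem~\ref{ldthm2} and Theorem~\ref{thm:mdfpp} for $k\ge2$.
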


Due to the connection between the generalised tribes function and the left-right crossing of a square among paths of vertical displacement at most $k=1$ (i.e.\ no vertical displacement), the reader can note that by Proposition~\ref{maintribesthm}, for $n=m^2$, $\lambda=1/2$, $\beta\in (0,1)$ and any $\beta$-quantile $q_{\beta}$ of $\tau(m,1)$, gives us $\P(\tau(m,1)\le q_{\beta}) \rightarrow \beta,$ and that the indicator $\mathbbm{1}_{\{\tau(m,1)> q_{\beta}\}}$ is noise sensitive as $m\to\infty$, and hence proves a special case of Theorem~\ref{thm:main}.

We now move to the proof of Proposition~\ref{maintribesthm}. Since the number of 1s in a tribe follows a binomial distribution, and since our function asks for the maximal number of 1s in any tribe, we shall in the proof of the proposition make use of known estimates on the tail of the centred binomial. Let $X_n$ denote a binomially distributed random variable with parameters $n$ and $1/2$. The following estimates are a reformulation of Theorem~2 in Bahadur~\cite{bahadur60} (together with an observation in the summary); see also Theorem~1.13 in~\cite{simonmoderate}: For any sequence $x_n$ satisfying $1\ll x_n\ll n^{1/6}$ we have, as $n\to\infty$, that
\begin{align}
\label{eq:bin1}
    \P\big(X_n\ge n/2+x_n\sqrt{n}/2\big)&=(1+o(1))\frac{1}{x_n\sqrt{2\pi}}\exp\big(-x_n^2/2\big),\\
\label{eq:bin2}
    \P\big(X_n=\floor{n/2+x_n\sqrt{n}/2}\big)&=(1+o(1))\frac{\sqrt{2}}{\sqrt{\pi n}}\exp\big(-x_n^2/2\big).
\end{align}

We begin with a couple of lemmas determining the correct order of the $\beta$-quantiles of the generalised tribes function. For $\lambda,\beta \in (0,1)$ and $n\ge1$, let $s_{\lambda,\beta}=s_{\lambda,\beta}(n)$ be defined as
\begin{equation*}
s_{\lambda,\beta}:=\frac{\ell_\lambda}{2} + \frac{\sqrt{\ell_\lambda}}{2}\sqrt{2(1-\lambda)\log n -\log\log n - 2\log\Big(\sqrt{4\pi\big(1-\lambda\big)}\log \beta^{-1}\Big)}.
\end{equation*}

\begin{lemma}\label{case1conv}
For every $\lambda,\beta\in(0,1)$ we have $\P(S^{\lambda}\le s_{\lambda,\beta}) \to \beta$ as $n\to\infty$.
\end{lemma}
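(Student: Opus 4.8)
The plan is to show that $S^\lambda$ is concentrated enough around $s_{\lambda,\beta}$, essentially via a first/second moment computation on the number of tribes whose sum exceeds a given threshold. The key point is that $S^\lambda > s$ if and only if $N_s := \#\{j \le m_\lambda : S_j > s\} \ge 1$, so $\P(S^\lambda \le s) = \P(N_s = 0)$. Since the tribes are disjoint blocks of coordinates, the sums $S_1, \ldots, S_{m_\lambda}$ are \emph{independent}, which makes the analysis of $N_s$ considerably cleaner than in the first-passage setting — $N_s$ is a sum of $m_\lambda$ i.i.d.\ Bernoulli variables.

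First I would choose the threshold. Write $s_{\lambda,\beta} = \ell_\lambda/2 + x\sqrt{\ell_\lambda}/2$ with $x = x(n)$ the square-root expression appearing in the definition; note $x \asymp \sqrt{2(1-\lambda)\log n}$, so in particular $1 \ll x \ll \ell_\lambda^{1/6} = n^{\lambda/6}$, which is exactly the range in which the Bahadur estimates \eqref{eq:bin1}–\eqref{eq:bin2} apply (with $n$ there replaced by $\ell_\lambda$). Applying \eqref{eq:bin1} to a single tribe gives
$$
p_s := \P(S_j \ge s_{\lambda,\beta}) = (1+o(1))\frac{1}{x\sqrt{2\pi}}\exp(-x^2/2).
$$
The definition of $s_{\lambda,\beta}$ is reverse-engineered precisely so that $m_\lambda\, p_s \to \log\beta^{-1}$: plugging in $x^2 = 2(1-\lambda)\log n - \log\log n - 2\log(\sqrt{4\pi(1-\lambda)}\log\beta^{-1})$ one gets $\exp(-x^2/2) = \frac{\sqrt{\log\log n}\cdot \sqrt{4\pi(1-\lambda)}\log\beta^{-1}}{n^{1-\lambda}}$ (up to the $\sqrt{\phantom{x}}$ bookkeeping), and combined with $x \sim \sqrt{2(1-\lambda)\log n}$ and $m_\lambda \sim n^{1-\lambda}$ the powers of $n$ cancel and the logarithmic factors combine to leave $m_\lambda p_s \to \log\beta^{-1}$. (This is the one genuinely fiddly computation; I'd carry it out carefully but it is just algebra.)

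Given that, I would finish by Poisson approximation: since $N_s \sim \mathrm{Bin}(m_\lambda, p_s)$ with $m_\lambda p_s \to \log \beta^{-1}$ and $p_s \to 0$, we have $\P(N_s = 0) = (1-p_s)^{m_\lambda} = \exp(-m_\lambda p_s + O(m_\lambda p_s^2)) \to e^{-\log\beta^{-1}} = \beta$. Here $m_\lambda p_s^2 \to 0$ is immediate since $p_s \to 0$. Therefore $\P(S^\lambda \le s_{\lambda,\beta}) = \P(N_s = 0) \to \beta$, as claimed. The main obstacle is purely the calibration algebra in the previous paragraph — making sure the $\log\log n$ term and the constant $\sqrt{4\pi(1-\lambda)}$ are exactly what is needed so that $m_\lambda p_s$ converges to $\log\beta^{-1}$ rather than to $0$ or $\infty$, and checking that the error term $o(1)$ from \eqref{eq:bin1} does not spoil this limit (it does not, since it multiplies a bounded quantity). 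Once the threshold is correctly identified, the independence of the tribes makes the rest routine; no second-moment argument is even needed, in contrast to what will be required once dependence enters in later sections.
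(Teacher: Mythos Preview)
Your approach is essentially identical to the paper's: both use independence of the tribes to write $\P(S^\lambda\le s_{\lambda,\beta})=(1-p_s)^{m_\lambda}$, apply the Bahadur tail estimate~\eqref{eq:bin1} to show $m_\lambda p_s\to\log(1/\beta)$, and conclude via $(1-p_s)^{m_\lambda}\to e^{-\log(1/\beta)}=\beta$. One small slip in your calibration sketch: $\exp(-x^2/2)$ produces a factor $\sqrt{\log n}$, not $\sqrt{\log\log n}$, which is exactly what is needed to cancel against $x\sim\sqrt{2(1-\lambda)\log n}$ in the denominator; with that corrected the algebra goes through as you describe.
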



\begin{proof}
For $\{S^\lambda\le s_{\lambda,\beta}\}$ to occur, we need that all tribes to contain at most $s_{\lambda,\beta}$ 1s. Since tribes are disjoint it follows by independence that 
$$
\P\big(S^{\lambda}\le s_{\lambda,\beta}\big) = \Big(1 - \P\big(X_{\ell_\lambda}> s_{\lambda,\beta}\big)\Big)^{m_\lambda}.
$$
By~\eqref{eq:bin1}, and since by~\eqref{eq:bin2} the probability of attaining the value $\floor{s_{\lambda,\beta}}$ is of a lower order, we have that
$$
\P\big(X_{\ell_\lambda}> s_{\lambda,\beta}\big)=(1+o(1))\frac{\log1/\beta}{n^{1-\lambda}}.
$$
Since $m_\lambda=(1+o(1))n^{1-\lambda}$, we thus obtain, as $n\to\infty$, that
$$
\P\big(S^{\lambda}\le s_{\lambda,\beta}\big)=\Big(1-(1+o(1))\frac{\log1/\beta}{n^{1-\lambda}}\Big)^{m_\lambda}\to\exp\Big(-\log\frac1\beta\Big)=\beta,
$$
as required.
\end{proof}

\begin{lemma}\label{tribesquantile}
For every $\lambda,\beta\in(0,1)$ and $\eps>0$ small enough we have that any $\beta$-quantile $q_{\lambda,\beta}$ of $S^\lambda$ satisfies, for all sufficiently large $n$, that
 $$
 s_{\lambda,\beta-\eps}< q_{\lambda,\beta}<s_{\lambda,\beta+\eps}.
 $$
 \end{lemma}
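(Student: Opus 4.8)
The plan is to deduce Lemma~\ref{tribesquantile} from Lemma~\ref{case1conv} together with the observation that $s_{\lambda,\beta}$ is, for each fixed $\lambda$, \emph{strictly monotone and continuous in $\beta$}. First I would note from the explicit formula that $\beta\mapsto s_{\lambda,\beta}$ is strictly decreasing in $\beta$ (since $\log\beta^{-1}$ decreases in $\beta$ and enters through $-2\log(\cdots\log\beta^{-1})$, which is increasing in $\log\beta^{-1}$ and hence decreasing in $\beta$), so for $\eps>0$ small enough $s_{\lambda,\beta-\eps}>s_{\lambda,\beta}>s_{\lambda,\beta+\eps}$ for all large $n$; in particular the three thresholds are genuinely ordered and the claimed inequalities are not vacuous.

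The main step is the following. Fix $\eps>0$ small (small enough that $\beta\pm\eps\in(0,1)$). By Lemma~\ref{case1conv} applied at parameter $\beta+\eps$ we have $\P(S^\lambda\le s_{\lambda,\beta+\eps})\to\beta+\eps$, so for all sufficiently large $n$ this probability exceeds $\beta$. By the definition of a $\beta$-quantile --- namely that $q_{\lambda,\beta}$ is a value with $\P(S^\lambda\le q_{\lambda,\beta})\ge\beta$ and $\P(S^\lambda\ge q_{\lambda,\beta})\ge1-\beta$ --- any quantile must satisfy $q_{\lambda,\beta}\le s_{\lambda,\beta+\eps}$ once $\P(S^\lambda\le s_{\lambda,\beta+\eps})>\beta$; otherwise $q_{\lambda,\beta}>s_{\lambda,\beta+\eps}$ would force $\P(S^\lambda<q_{\lambda,\beta})\ge\P(S^\lambda\le s_{\lambda,\beta+\eps})>\beta$, contradicting $\P(S^\lambda\ge q_{\lambda,\beta})\ge1-\beta$. (Here I should be slightly careful because $S^\lambda$ is integer-valued, so strict and non-strict inequalities at the threshold need to be tracked; since $s_{\lambda,\beta\pm\eps}$ is generically non-integer this causes no trouble, but one may phrase everything via $\P(S^\lambda\le\lfloor s_{\lambda,\beta+\eps}\rfloor)$ to be safe.) Symmetrically, Lemma~\ref{case1conv} at parameter $\beta-\eps$ gives $\P(S^\lambda\le s_{\lambda,\beta-\eps})\to\beta-\eps<\beta$, so for large $n$ this probability is $<\beta$, which forces $q_{\lambda,\beta}>s_{\lambda,\beta-\eps}$ by the other half of the quantile definition.

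Combining the two bounds gives $s_{\lambda,\beta-\eps}<q_{\lambda,\beta}<s_{\lambda,\beta+\eps}$ for all sufficiently large $n$, as claimed. I do not expect a serious obstacle here: the only delicate point is the interplay between the discreteness of $S^\lambda$ and the (essentially arbitrary) choice of $\beta$-quantile, which is handled by replacing $s_{\lambda,\beta\pm\eps}$ by their integer floors and invoking that, by~\eqref{eq:bin2} and the argument in the proof of Lemma~\ref{case1conv}, moving the threshold by $O(1)$ does not change the limiting probability --- so $\P(S^\lambda\le\lfloor s_{\lambda,\beta\pm\eps}\rfloor)$ also converges to $\beta\pm\eps$. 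Everything else is just monotonicity of the quantile relation.
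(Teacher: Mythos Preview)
Your approach is essentially the paper's: invoke Lemma~\ref{case1conv} at $\beta\pm\eps$ and use the quantile definition. Two small corrections are needed.

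First, your monotonicity claim is backwards. As $\beta$ increases, $\log\beta^{-1}$ decreases, so $-2\log(c\log\beta^{-1})$ \emph{increases} (not decreases as you write), and hence $s_{\lambda,\beta}$ is \emph{increasing} in $\beta$. This is consistent with the lemma's conclusion $s_{\lambda,\beta-\eps}<q_{\lambda,\beta}<s_{\lambda,\beta+\eps}$; with your stated ordering $s_{\lambda,\beta-\eps}>s_{\lambda,\beta+\eps}$ the claim would be empty. Fortunately your main argument never uses this, so the error is confined to the preamble.

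Second, your upper-bound argument only yields $q_{\lambda,\beta}\le s_{\lambda,\beta+\eps}$, and the ``generically non-integer'' appeal does not give the strict inequality for \emph{all} large $n$. The paper sidesteps this by interposing $s_{\lambda,\beta+\eps/2}$: since $s_{\lambda,\beta+\eps/2}<s_{\lambda,\beta+\eps}$ strictly, one has
\[
\P\big(S^\lambda\ge s_{\lambda,\beta+\eps}\big)\le\P\big(S^\lambda> s_{\lambda,\beta+\eps/2}\big)\to 1-\beta-\eps/2,
\]
so for large $n$ this is $<1-\beta$, and hence no $\beta$-quantile can satisfy $q\ge s_{\lambda,\beta+\eps}$. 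This gives the strict inequality without any integer-value discussion.
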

  \begin{proof}
 Fix $\lambda,\beta \in (0,1)$ and $\eps>0$ such that $0<\beta-\eps<\beta+\eps<1$. By Lemma~\ref{case1conv}, for large $n$ we have
 $$
 \P\big(S^{\lambda}\le s_{\lambda,\beta-\eps}\big) \le \beta-\eps/2, $$
 which implies that $q_{\lambda,\beta}>s_{\lambda,\beta-\eps}$. We similarly obtain, again from Lemma~\ref{case1conv}, that
$$
\P\big(S^{\lambda} \ge  s_{\lambda,\beta+\eps}\big)\le\P\big(S^{\lambda} >  s_{\lambda,\beta+\eps/2}\big)\le 1-\beta-\eps/4,
$$
which shows that $q_{\lambda,\beta}<s_{\lambda,\beta+\eps}$, for large values of $n$.
\end{proof}
 
 
 With these estimates at hand, we now prove Proposition \ref{maintribesthm}.

\begin{proof}[Proof of Proposition~\ref{maintribesthm}]
Fix $\lambda,\beta\in(0,1)$ and let $q_{\lambda,\beta}$ be any $\beta$-quantile of $S^\lambda$. By Lemmas~\ref{case1conv} and~\ref{tribesquantile} we have for small enough $\eps>0$ and all large $n$ that
$$
\P\big(f_{\lambda,\beta}=0\big)=\P\big(S^\lambda\le q_{\beta,\lambda}\big)\le\P\big(S^\lambda\le s_{\lambda,\beta+\eps}\big)\le\beta+2\eps.
$$
Analogously we obtain the lower bound
$$
\P\big(f_{\lambda,\beta}=0\big)=\P\big(S^\lambda\le q_{\beta,\lambda}\big)\ge\P\big(S^\lambda\le s_{\lambda,\beta-\eps}\big)\ge\beta-2\eps.
$$
Since $\eps>0$ was arbitrary, it follows that $\P(f_{\lambda,\beta}=0)\to\beta$ as $n\to\infty$.

To prove that the sequence is noise sensitive we aim to prove that the sum of square influences tends to zero as $n\to\infty$. Noise sensitivity will then follow from the BKS Theorem.

First note that bits not part of any tribe have zero influence. In addition, all remaining influences are equal due to symmetry. It will hence suffice to bound the influence of the first bit of the first tribe. For this bit to be decisive there have to be precisely $\floor{q_{\lambda,\beta}}$ 1s among the remaining $\ell_\lambda-1$ bits of the first tribe, as well as no other tribe with more than $q_{\lambda,\beta}$ 1s. Since a particular tribe is unlikely to exceed $q_{\lambda,\beta}$, the probability of the latter approaches $\beta$ as $n\to\infty$. Consequently, by independence between tribes,
$$
\Inf_1(f_{\lambda,\beta})=(\beta+o(1))\P\big(X_{\ell_\lambda-1}=\floor{q_{\lambda,\beta}}\big),
$$
where $X_n$ again is a centred binomial of $n$ trials. Using~\eqref{eq:bin2} and Lemma~\ref{tribesquantile} we obtain for fixed values of $\lambda,\beta\in(0,1)$ that
$$
\Inf_1(f_{\lambda,\beta})\asymp\frac{\sqrt{\log n}}{n^{1-\lambda/2}}.
$$
Squaring the influences thus gives that
$$
\sum_{i\in[n]}\Inf_i(f_{\lambda,\beta})^2\asymp\frac{\log n}{n^{1-\lambda}}.
$$
For fixed $\lambda,\beta\in(0,1)$ the BKS Theorem hence implies that $f_{\lambda,\beta}$ is noise sensitive, as $n\to\infty$.
\end{proof}



\section{Cramér-type moderate deviations for triangular arrays}\label{sec:generalMD}

In this section we state and prove a Cram\'er-type result for the moderate deviations of a sum of independent random variables. The result is different from Cram\'er's classical result in that it applies to triangular arrays of independent, but not necessarily identically distributed, random variables. In particular, the distributions of the existing random variables are allowed to vary as more variables are included. As mentioned in the introduction, this will be one of the key steps to prove noise sensitivity in the context of first-passage percolation. 

\begin{theorem}\label{ldthm2}
 For every $m\ge1$, let $X^{(m)}_{1},X_2^{(m)},\ldots,X^{(m)}_{m}$ be a sequence of independent random variables with mean zero and finite variance, and set
 $$
 \sigma_m:= \sqrt{\frac{\sum_{i=1}^{m}\Var(X^{(m)}_{i})}{m}}.
 $$
 Suppose that $\sigma_m\ge1$ and that there exist global constants $\delta\in[0,1)$ and $C\ge1$ such that for every $m\ge1$ and $i=1,2,\ldots,m$, and all $j\ge2$, we have
 \begin{equation}\label{eq:sigma_cond}
 \E\big[\big|X_i^{(m)}\big|^{j}\big] \leq j!\,(C\sigma_m)^{(1+\delta)j}.
 \end{equation}
 Let $F_{m}$ be the distribution function of the normalised sum $(X^{(m)}_{1}+...+X^{(m)}_{m})/(\sigma_m\sqrt{m})$.  Then, assuming $\sigma_m^\delta\ll m^{1/6}$, we have for  $1\ll x\ll m^{1/6}/\sigma_m^{\delta}$ that
$$
1-F_{m}(x)= \Big[1+O\Big(\sigma_m^{3\delta}\frac{x^{3}}{\sqrt{m}}\Big)\Big]\big[1-\Phi(x)\big],
$$
where $\Phi$ denotes the distribution function of the standard Gaussian distribution.
 \end{theorem}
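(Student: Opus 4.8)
The plan is to follow the classical Cram\'er route via the conjugate (exponential tilting) transform, but carefully tracking how every estimate degrades in the presence of the moment bound~\eqref{eq:sigma_cond}, which is weaker than the usual ``bounded exponential moment'' hypothesis by the factor $\sigma_m^{(1+\delta)j}$. First I would observe that the moment assumption~\eqref{eq:sigma_cond} guarantees that the moment generating function $\varphi_i^{(m)}(t):=\E[\exp(tX_i^{(m)})]$ is finite (and analytic) for $|t|<1/(2C\sigma_m^{1+\delta})$, so the cumulant generating function $\psi_m(t):=\sum_{i=1}^m\log\varphi_i^{(m)}(t)$ is well defined on that disc. Writing $S_m:=X_1^{(m)}+\cdots+X_m^{(m)}$ and normalising by $\sigma_m\sqrt m$, I would introduce the tilted measures $d\tilde\P_h\propto e^{hS_m}\,d\P$ and choose the tilting parameter $h=h(x)$ so that $\E_{\tilde\P_h}[S_m]=x\sigma_m\sqrt m$. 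The standard identity then gives
\begin{equation*}
1-F_m(x)=\P(S_m\ge x\sigma_m\sqrt m)=e^{\psi_m(h)-hx\sigma_m\sqrt m}\,\E_{\tilde\P_h}\big[e^{-h(S_m-x\sigma_m\sqrt m)}\mathbbm{1}_{\{S_m\ge x\sigma_m\sqrt m\}}\big],
\end{equation*}
and the remaining work is (a) a Taylor expansion of the exponent $\psi_m(h)-hx\sigma_m\sqrt m$ around $h=0$, and (b) a local central limit / Berry--Esseen estimate for $S_m$ under $\tilde\P_h$ to control the expectation factor.

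For step (a) I would expand $\psi_m(h)=\tfrac12 h^2\sigma_m^2 m+\tfrac16 h^3\Lambda_m^{(3)}+\dots$, where $\Lambda_m^{(3)}=\sum_i\E[(X_i^{(m)})^3]$ and higher cumulant sums are similarly controlled. The moment bound~\eqref{eq:sigma_cond} yields $|\Lambda_m^{(j)}|\le C_j\, m\,(C\sigma_m)^{(1+\delta)j}$, so each extra order in the expansion costs a factor $h\sigma_m^{1+\delta}$ (times a combinatorial constant). Solving $\psi_m'(h)=x\sigma_m\sqrt m$ perturbatively gives $h=\tfrac{x}{\sigma_m\sqrt m}\big(1+O(\text{correction})\big)$; since $x\ll m^{1/6}/\sigma_m^\delta$ we have $h\sigma_m^{1+\delta}=O(x\sigma_m^\delta/\sqrt m)=o(m^{-1/3})$, comfortably inside the radius of convergence, and all the higher-order corrections are controlled by powers of $x\sigma_m^\delta/\sqrt m$. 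Carrying the expansion one more term than naively needed, the exponent becomes $-\tfrac{x^2}{2}+O\big(\sigma_m^{3\delta}x^3/\sqrt m\big)$, which is exactly the exponent appearing in $1-\Phi(x)$ up to the claimed multiplicative error. This requires keeping track of the fact that an error of size $\eta$ in the exponent multiplies the probability by $1+O(\eta)$ only when $\eta=o(1)$, which is guaranteed by the range of $x$.

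For step (b), under the tilted law $\tilde\P_h$ the variables $X_i^{(m)}-\E_{\tilde\P_h}[X_i^{(m)}]$ are again independent, mean zero, with variance $\psi_i''(h)$ comparable (up to a $1+o(1)$ factor, by the smallness of $h\sigma_m^{1+\delta}$) to $\Var(X_i^{(m)})$, and third absolute moment bounded via~\eqref{eq:sigma_cond}. A Berry--Esseen bound then shows that $(S_m-x\sigma_m\sqrt m)/(\sigma_m\sqrt m)$ is approximately standard normal under $\tilde\P_h$, with error again controlled by $\sigma_m^{(1+\delta)\cdot 3/2}\cdot m\,/\,(\sigma_m\sqrt m)^3=O(\sigma_m^{3\delta/2}/\sqrt m)$; integrating $e^{-h y}\mathbbm{1}_{\{y\ge 0\}}$ against this near-Gaussian density (with $h\sigma_m\sqrt m\asymp x$) produces the factor $\tfrac{1}{h\sigma_m\sqrt m}\cdot(1+o(1))\asymp\tfrac1x(1+o(1))$, matching the $\tfrac{1}{x\sqrt{2\pi}}$ prefactor in the asymptotics $1-\Phi(x)\sim\tfrac{1}{x\sqrt{2\pi}}e^{-x^2/2}$. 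Assembling (a) and (b) gives $1-F_m(x)=[1+O(\sigma_m^{3\delta}x^3/\sqrt m)][1-\Phi(x)]$.

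The main obstacle, and where I would spend the most care, is the bookkeeping of the $\sigma_m$ factors throughout: because the moment hypothesis is inflated by $\sigma_m^{(1+\delta)j}$, one must verify that in the admissible range $x\ll m^{1/6}/\sigma_m^\delta$ every correction term — from the Taylor expansion of $\psi_m$, from inverting $\psi_m'$, and from the Berry--Esseen step — is genuinely $o(1)$ and, moreover, that after collecting them the dominant correction is precisely of order $\sigma_m^{3\delta}x^3/\sqrt m$ and not something larger. In particular one must check the interplay between the two smallness conditions $\sigma_m^\delta\ll m^{1/6}$ and $x\ll m^{1/6}/\sigma_m^\delta$: the former ensures the tilting parameter stays well inside the analyticity disc even for the largest allowed $x$, and the latter ensures the cubic correction term in the exponent is $o(1)$. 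A secondary technical point is justifying the local limit/Berry--Esseen estimate uniformly over the range of $h$, which reduces to a standard Esseen-type Fourier argument using the uniform lower bound $\sigma_m\ge 1$ and the moment bound~\eqref{eq:sigma_cond} to control the characteristic function of the tilted sum away from the origin.
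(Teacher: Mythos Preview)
Your proposal is correct and follows essentially the same route as the paper: exponential tilting (the paper frames it as the ``associated distribution'' \`a la Feller), a cumulant expansion to control the exponent $\psi_m(h)-hx\sigma_m\sqrt m$, and a Berry--Esseen bound under the tilted law (combined with integration by parts in the paper) to handle the remaining expectation/integral factor. The only slip is in your Berry--Esseen error exponent: since $\E|X_i^{(m)}|^3\le 6(C\sigma_m)^{3(1+\delta)}$, the ratio is $O(\sigma_m^{3\delta}/\sqrt m)$ rather than $O(\sigma_m^{3\delta/2}/\sqrt m)$, but this is harmless because the cubic cumulant term already dominates.
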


 Our proof will follow closely the proof of Cram\'er's Theorem as presented by Feller~\cite[Chapter~XVI.7]{feller-vol-2}. As is usual in the proof of theorems of this type, the proof will follow from the analysis of moment and cumulant generating functions. The moment generating function of a random variable $X$ is the function $f(s):=\E[e^{sX}]$, and the cumulant generating function is defined as $\psi(s):=\log f(s)$. These functions are not well defined for all random variables $X$, but when they are, in a vicinity of the origin, they provide useful information of the random variable.
 
 Before we tend to the proof of the above theorem, we prove a lemma regarding the regularity of the cumulant generating function of a random variable.

  \begin{lemma}\label{ldlma}
 Let $X$ be a random variable with mean zero, variance $\sigma^2$ and third moment $\mu_3$. Suppose there exists constants $\delta\ge0$ and $\gamma>0$ such that for all $j\ge2$
 \begin{equation}\label{eq:Xcond}
 \E\big[|X|^j\big]\le j!\,\gamma^{(1+\delta)j}.
 \end{equation}
 Then, the moment generating function $f(s)=\E[e^{sX}]$ and the cumulant generating function $\psi(s)=\log f(s)$ are well-defined and continuously differentiable of all orders for $|s|<1/\gamma^{1+\delta}$. Moreover, there exists a global constant $C>0$, not depending on the distribution of $X$, such that for $|s|\le1/(2\gamma^{1+\delta})$
 \begin{align*}
 \big|\psi(s)-\frac12\sigma^2s^2-\frac16\mu_3s^3\big|&\le C\gamma^{4(1+\delta)}|s|^4,\\
 \big|\psi'(s)-\sigma^2s-\frac12\mu_3s^2\big|&\le C\gamma^{4(1+\delta)}|s|^3,\\
 \big|\psi''(s)-\sigma^2-\mu_3s\big|&\le C\gamma^{4(1+\delta)}|s|^2.
 \end{align*}
 \end{lemma}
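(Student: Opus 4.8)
The plan is to control everything through a careful Taylor expansion of $\psi$ around the origin, using the moment bound~\eqref{eq:Xcond} to bound all the remainder terms uniformly in the distribution of $X$.

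\medskip

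First I would establish that $f(s)=\E[e^{sX}]$ is well-defined and analytic on the disk $|s|<1/\gamma^{1+\delta}$. Expanding $e^{sX}=\sum_{j\ge0}\frac{s^jX^j}{j!}$ and taking expectations, the hypothesis~\eqref{eq:Xcond} gives $\big|\E[s^jX^j/j!]\big|\le|s|^j\gamma^{(1+\delta)j}=(|s|\gamma^{1+\delta})^j$, so the series converges absolutely and uniformly on compact subsets of that disk; this justifies termwise differentiation of all orders and shows $f$ is smooth (indeed real-analytic) there, with $f(0)=1$, $f'(0)=\E[X]=0$, $f''(0)=\sigma^2$, $f'''(0)=\mu_3$. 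Since $f(0)=1$ and $f$ is continuous, $f(s)>0$ in a neighbourhood of $0$ — and in fact one can check $|f(s)-1|\le\sum_{j\ge1}(|s|\gamma^{1+\delta})^j\le\frac{|s|\gamma^{1+\delta}}{1-|s|\gamma^{1+\delta}}$, which is $<1$ (say $\le1/3$) once $|s|\le1/(2\gamma^{1+\delta})$ — so $\psi=\log f$ is well-defined and smooth of all orders on $|s|\le1/(2\gamma^{1+\delta})$ (I would shrink the radius slightly if needed to keep $f$ bounded away from $0$).

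\medskip

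Next, for the quantitative estimates, I would write $f(s)=1+g(s)$ with $g(s)=\frac12\sigma^2s^2+\frac16\mu_3s^3+r(s)$, where $r(s)=\sum_{j\ge4}\frac{\E[X^j]}{j!}s^j$. By~\eqref{eq:Xcond}, $|r(s)|\le\sum_{j\ge4}(|s|\gamma^{1+\delta})^j=\frac{(|s|\gamma^{1+\delta})^4}{1-|s|\gamma^{1+\delta}}\le 2\gamma^{4(1+\delta)}|s|^4$ on $|s|\le1/(2\gamma^{1+\delta})$, and similarly its first two derivatives are bounded by $C\gamma^{4(1+\delta)}|s|^3$ and $C\gamma^{4(1+\delta)}|s|^2$ (differentiating the series termwise and reindexing). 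I would also record the crude bound $|g(s)|\le C\gamma^{2(1+\delta)}|s|^2$ and, since $\gamma\ge$ a fixed constant is \emph{not} assumed — wait, we only know $\delta\ge0$, $\gamma>0$ — I should be careful: $\sigma^2=\E[X^2]\le2\gamma^{2(1+\delta)}$, so $\gamma^{2(1+\delta)}\ge\sigma^2/2$, meaning the $\gamma$-powers dominate the $\sigma$-powers and the bookkeeping is consistent. Then $\psi(s)=\log(1+g(s))=g(s)-\frac12g(s)^2+\frac13g(s)^3-\cdots$, and since $g(s)=O(\gamma^{2(1+\delta)}s^2)$, each term $g(s)^k$ for $k\ge2$ is $O(\gamma^{4(1+\delta)}|s|^4)$ (using $|s|\gamma^{1+\delta}\le1$ to absorb the extra factors), so $\psi(s)-g(s)=O(\gamma^{4(1+\delta)}|s|^4)$. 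Combined with $g(s)-\frac12\sigma^2s^2-\frac16\mu_3s^3=r(s)=O(\gamma^{4(1+\delta)}|s|^4)$, this gives the first inequality. The derivative estimates follow either by differentiating the identity $\psi'=f'/f$ and expanding, or by differentiating the power series $\psi(s)=\sum_k c_k s^k$ term by term after noting $c_2=\frac12\sigma^2$, $c_3=\frac16\mu_3$, and $\sum_{k\ge4}|c_k||s|^k=O(\gamma^{4(1+\delta)}|s|^4)$ on the disk of convergence (the cumulant coefficients $c_k$ inherit bounds of the form $|c_k|\le (C\gamma^{1+\delta})^k$ from the moment bounds, e.g.\ via the exponential formula relating cumulants to moments, or simply from Cauchy's estimate applied to $\psi$ on a slightly larger disk).

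\medskip

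The main obstacle is the last point: obtaining a \emph{uniform} (distribution-independent) bound on the tail of the cumulant series $\sum_{k\ge4}|c_k||s|^k$. The cleanest route is probably Cauchy's integral formula: since $f$ is analytic and bounded away from $0$ on $|s|\le 3/(4\gamma^{1+\delta})$ (with $|f|$ bounded above and below there by absolute constants, using the estimates above), $\psi$ is analytic there with $|\psi|\le C$ on that circle, so Cauchy's estimate gives $|c_k|=|\psi^{(k)}(0)|/k!\le C\,(4\gamma^{1+\delta}/3)^k$, and then $\sum_{k\ge4}|c_k||s|^k\le C\sum_{k\ge4}(4|s|\gamma^{1+\delta}/3)^k\le C'\gamma^{4(1+\delta)}|s|^4$ for $|s|\le1/(2\gamma^{1+\delta})$ since the ratio $4/3\cdot1/2=2/3<1$. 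Differentiating this series once or twice loses one or two powers of $|s|$ and introduces a harmless constant, yielding the second and third inequalities. An alternative, more hands-on route avoiding complex analysis is to use the Faà di Bruno / moments-to-cumulants formula directly, but the combinatorial bookkeeping there is exactly the routine calculation I would rather not grind through; the Cauchy-estimate argument packages it all at once.
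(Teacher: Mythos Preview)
Your approach is correct, and for the bound on $\psi$ itself it matches the paper's exactly: both write $f(s)=1+g(s)$, bound the tail of $g$ by a geometric series, and use $\log(1+g)=g+O(g^2)$. For the derivative bounds, however, the paper does not go through Cauchy estimates on the cumulant coefficients. Instead it takes the elementary route you mention only in passing: it first records the analogous truncation bounds for $f'$ and $f''$ directly from the moment series, namely $|f'(s)-\sigma^2s-\tfrac12\mu_3s^2|\le C'\gamma^{4(1+\delta)}|s|^3$ and $|f''(s)-\sigma^2-\mu_3s|\le C''\gamma^{4(1+\delta)}|s|^2$, and then combines these with the crude bounds $|f-1|\le2(\gamma^{1+\delta}|s|)^2$, $|f'|\le4\gamma^{2(1+\delta)}|s|$, $|f''|\le4\gamma^{2(1+\delta)}$ through the explicit formulas $\psi'=f'/f$ and $\psi''=f''/f-(\psi')^2$, using only $\tfrac{1}{1+x}=1+O(x)$. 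This keeps everything real and makes the constants transparent; your Cauchy route is conceptually slicker but imports analyticity in a complex disk.

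One slip in your Cauchy argument does need repair: on the circle $|s|=3/(4\gamma^{1+\delta})$ the bound $|f(s)-1|\le\sum_{j\ge2}(3/4)^j=9/4$ does \emph{not} keep $f$ away from zero, so you cannot conclude that $\psi=\log f$ is analytic there. (Your earlier looser bound $\sum_{j\ge1}(|s|\gamma^{1+\delta})^j$ is worse still, and in fact equals $1$, not $\le1/3$, at $|s|\gamma^{1+\delta}=1/2$.) The fix is easy: since $\E X=0$ the series for $f-1$ begins at $j=2$, so $|f(s)-1|\le\rho^2/(1-\rho)$ for $\rho=|s|\gamma^{1+\delta}$, and this is $<1$ exactly when $\rho<(\sqrt5-1)/2\approx0.618$. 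Taking the Cauchy circle at radius, say, $3/(5\gamma^{1+\delta})$ then gives $|f|$ bounded above and below by absolute constants, hence $|\psi|$ bounded there, and the ratio $\tfrac{1/2}{3/5}=\tfrac56<1$ makes the tail $\sum_{k\ge4}|c_k||s|^k$ (and its termwise derivatives) geometric as you intended.
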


 \begin{proof}
Let $\mu_j:=\E[X^j]$ denote the $j$th moment of $X$, so that, in particular, $\mu_1=0$, $\mu_2=\sigma^2$ and $\mu_3$ is the third moment, as defined above. Then, by~\eqref{eq:Xcond}, we have for $k\ge1$ and $|s|\le1/(2\gamma^{1+\delta})$
 \begin{equation}\label{eq:f_expansion}
 \Big|f(s)-1-\sum_{j=2}^k\frac{\mu_j}{j!}s^j\Big|\le\sum_{j\ge k+1}\frac{|\mu_j|}{j!}|s|^j\le\sum_{j\ge k+1}\big(\gamma^{1+\delta}|s|\big)^j\le2\big(\gamma^{1+\delta}|s|\big)^{k+1}.
 \end{equation}
In particular, it follows from the second-to-last expression in~\eqref{eq:f_expansion} that $f$ is analytic for $|s|<1/\gamma^{1+\delta}$, and hence well-defined and continuously differentiable of all orders. In the same way we find that there exist global constants $C',C''>0$ such that for $|s|\le1/(2\gamma^{1+\delta})$
 \begin{equation}\label{eq:fp_expansion}
     \big|f'(s)-\sigma^2s-\frac12\mu_3s^2\big|\le C'\gamma^{4(1+\delta)}|s|^3\quad\text{and}\quad\big|f''(s)-\sigma^2-\mu_3s\big|\le C''\gamma^{4(1+\delta)}|s|^2.
 \end{equation}

By~\eqref{eq:f_expansion} we have, in particular, that $|f(s)-1|\le2(\gamma^{1+\delta}|s|)^2$ and $|f(s)-1-\frac12\sigma^2s^2-\frac16\mu_3s^3|\le2(\gamma^{1+\delta}|s|)^4$. Since $\psi(s)=\log f(s)$ and $\log(1+x)=x+O(x^2)$ we obtain for $|s|\le1/(2\gamma^{1+\delta})$ that
$$
\big|\psi(s)-\frac12\sigma^2s^2-\frac16\mu_3s^3\big|\le\big|\psi(s)-(f(s)-1)\big|+\big|f(s)-1-\frac12\sigma^2s^2-\frac16\mu_3s^3\big|\le C(\gamma^{1+\delta}|s|)^4,
$$
for some constant $C$ not depending on the distribution of $X$. Moreover, differentiation yields
 $$
 \psi'(s)=\frac{f'(s)}{f(s)}\quad\text{and}\quad\psi''(s)=\frac{f''(s)f(s)-f'(s)^2}{f(s)^2}.
 $$
 Hence, since $\frac{1}{1+x}=1+O(x)$, and since $|f'(s)|\le4\gamma^{2(1+\delta)}|s|$, we obtain by~\eqref{eq:fp_expansion} that for $|s|\le1/(2\gamma^{1+\delta})$
 $$
 \big|\psi'(s)-\sigma^2s-\frac12\mu_3s^2\big|\le |f'(s)|\Big|\frac{1}{f(s)}-1\Big|+\big|f'(s)-\sigma^2s-\frac12\mu_3s^2\big|\le C\gamma^{4(1+\delta)}|s|^3,
 $$
 for some global constant $C$ not depending on the distribution of $X$. Finally, using~\eqref{eq:fp_expansion}, and that $|f''(s)|\le4\gamma^{2(1+\delta)}$ and $|\psi'(s)|\le4\gamma^{2(1+\delta)}|s|$, we obtain that for $|s|\le1/(2\gamma^{1+\delta})$
 $$
 \big|\psi''(s)-\sigma^2-\mu_3s\big|\le |f''(s)|\Big|\frac{1}{f(s)}-1\Big|+\big|f''(s)-\sigma^2-\mu_3s\big|+|\psi'(s)|^2\le C\gamma^{4(1+\delta)}|s|^2,
 $$
 for some global constant $C$ not depending on the distribution of $X$.
 \end{proof}

 \begin{proof}[Proof of Theorem~\ref{ldthm2}]
Although we will be working with triangular arrays, where the distribution of all variables are allowed to change in each step, we shall throughout the proof suppress the dependence on $m$ in order to ease the notation. For instance, we shall for a given value of $m\ge1$ and $i=1,2,\ldots,m$ denote by $G_i$ the distribution function of $X_i=X_i^{(m)}$, although the distribution is allowed to vary with $m$. Moreover, we shall let $f_i(s):=\E[e^{sX_i}]$ denote the moment generating function and $\psi_i(s):=\log f_i(s)$ the cumulant generating function of $G_i$. From Lemma~\ref{ldlma}, by assumption~\eqref{eq:sigma_cond}, it follows that $f_i(s)$ and $\psi_i(s)$ are well-defined and smooth for $|s|<(C\sigma_m)^{-(1+\delta)}$, and we let
 $$
 \psi(s):=\frac{1}{m}\sum_{i=1}^{m}\psi_{i}(s).
 $$

Note that for each $m\ge1$ and $1\le i\le m$ the first two derivatives of $\psi_{i}$ are again given by
$$
\psi_{i}'(s)=\frac{f_{i}'(s)}{f_{i}(s)}\quad\text{and}\quad\psi_{i}''(s)=\frac{f_{i}''(s)f_{i}(s)-f'_{i}(s)^2}{f(s)^2}.
$$
An application of the Cauchy-Schwarz inequality shows that
$$
f'_i(s)^2=\E\big[X_ie^{sX_i}\big]^2\le\E\big[|X_i|e^{sX_i}\big]^2\le\E\big[X_i^2e^{sX_i}\big]\E\big[e^{sX_i}\big]=f''_i(s)f_i(s),$$
so that $\psi_{i}''(s)\geq0$ on the domain where it is defined. In fact, since $G_i$ has mean zero, the first inequality is strict and $\psi_{i}''(s)>0$, unless $G_i$ also has zero variance. Since $\sigma_m^2\ge1$ by assumption, it follows that not all $G_i$ may have zero variance, and so that
$$
\psi''(s)=\frac{1}{m}\sum_{i=1}^{m}\psi''_{i}(s)>0
$$
on its domain. Since $\psi_{i}'(0)=0$ for each $i$ it follows that $\psi'(s)$ is positive and strictly increasing on the interval $(0,1/(C\sigma_m)^{1+\delta})$. Consequently, for $s>0$ and $x>0$ the relation
 \begin{equation}\label{eq:s-x_link}
 \sqrt{m}\psi'(s)=\sigma_m x, 
 \end{equation}
establishes a 1-1 correspondence between $s$ and $x$.
From Lemma~\ref{ldlma} we obtain that
$$
\Big|\frac{x}{\sigma_m\sqrt{m}}-s\Big|=\Big|\frac{1}{\sigma_m^2}\psi'(s)-s\Big|=O(\sigma_m^{1+3\delta}|s|^2),
$$
so that for $s=o(1/\sigma_m^{1+3\delta})$ we have
\begin{equation}\label{eq:s-x_approx}
\frac{x}{\sigma_m\sqrt{m}}=(1+o(1))s.
\end{equation}
We shall henceforth assume that $x$ and $s$ satisfy~\eqref{eq:s-x_link} and that $s=o(1/\sigma_m^{1+3\delta})$, so that also~\eqref{eq:s-x_approx} holds.

Following the steps of Feller, we next associate a new probability distribution $V_i$ with the distribution $G_i$ defined by 
\begin{equation}\label{eq:associated2}
V_i(dy) = e^{-\psi_{i}(s)}e^{sy}\,G_i(dy),
\end{equation}
where $s$ is chosen accordingly to our previous restrictions. Analogously to the function $f_{i}$, we define the moment generating function of $V_i$ as 
$$
\nu_{i}(\zeta):=\int e^{\zeta y}\,V_i(dy) = \frac{f_{i}(\zeta+s)}{f_{i}(s)}.
$$
It follows by differentiation that $V_i$ has expectation $\psi_{i}'(s)$ and variance $\psi_{i}''(s)$. Now, let $F_{m}^{\star}$ denote the the non-normalized version of $F_{m}$, i.e.\ the cumulative distribution function of the sum of the $m$ independent variables distributed as $G_1,\ldots,G_m$, and let $U_m^{\star}$ denote ditto for $m$ independent variables distributed as $V_1,\ldots,V_m$. Then $U_m^{\star}$ has expectation $m\psi'(s)$ and variance $m\psi''(s)$. Also, by comparing the moment generating functions, we observe that $U_m^\star$ and $F_{m}^{\star}$ satisfy a relation similar to~\eqref{eq:associated2} in that
$$
U_m^\star(dy) = e^{-m\psi(s)}e^{sy}\,F_m^\star(dy).
$$
It follows that
\begin{equation}\label{errorequation2}
    1-F_{m}(x) = 1-F_{m}^{\star}(x\sigma_m\sqrt{m}) = e^{m\psi(s)}\int_{x\sigma_m\sqrt{m}}^{\infty}e^{-sy}\,U_m^{\star}(dy).
\end{equation}

The proof will now proceed in two steps. We first analyse the expression obtained from~\eqref{errorequation2} when substituting $U_m^\star$ by the normal distribution with the same mean and variance. Second, we evaluate the relative error committed by this operation.
So, we define $A_{s}$ to be the quantity obtained by substituting $U_m^{\star}$ by $N(m\psi'(s),m\psi''(s))$ in the right-hand side of~\eqref{errorequation2}. Using the substitution of variables $y=m\psi'(s)+z\sqrt{m\psi''(s)}$, and the relation in~\eqref{eq:s-x_link}, we have that
\begin{equation}\label{eq:As3}
\begin{split}
    A_{s} &:= e^{m\psi(s)}\int_{x\sigma_m\sqrt{m}}^{\infty}e^{-sy}\frac{1}{\sqrt{2\pi m \psi''(s)}}e^{- (y-m\psi'(s))^{2}/(2m\psi''(s))}\,dy\\ 
    &\;=e^{m[\psi(s)-s\psi'(s)+\frac{1}{2}s^{2}\psi''(s)]}\frac{1}{\sqrt{2\pi}}\int_{0}^{\infty}e^{-(z+s\sqrt{m\psi''(s)})^{2}/2}\,dz.
    \end{split}
\end{equation}
We are now interested in the behavior of
$$
m\Big[\psi(s)-s\psi'(s)+\frac{1}{2}s^{2}\psi''(s)\Big]=\sum_{i=1}^m\Big[\psi_i(s)-s\psi_i'(s)+\frac{1}{2}s^{2}\psi_i''(s)\Big].
$$
Lemma~\ref{ldlma}, applied to each term in the sum, gives that for $s=o(1/\sigma_m^{1+3\delta})$
\begin{equation}\label{eq:exp_again}
m\Big[\psi(s)-s\psi'(s)+\frac{1}{2}s^{2}\psi''(s)\Big]=\frac{m}{6}\mu_3s^3+O(m\sigma_m^{4(1+\delta)}s^4)=O(m\sigma_m^{3(1+\delta)}s^3),
\end{equation}
where $\mu_3$ is the average of the third moments of the distributions $G_1,\ldots,G_m$. The above expression vanishes as $m\to\infty$ if $s=o(1/(m^{1/3}\sigma_m^{1+\delta}))$. We note that, under the assumption that $\sigma_m^\delta=o(m^{1/6})$, which is assumed, $s=o(1/(m^{1/3}\sigma_m^{1+\delta}))$ is a stronger condition than $s=o(1/\sigma_m^{1+3\delta})$.

Since $e^y=1+O(y)$ for small values of $|y|$, it follows from~\eqref{eq:As3} and~\eqref{eq:exp_again} that for $s=o(1/(m^{1/3}\sigma_m^{1+\delta}))$
$$
A_s=\Big(1+O\big(m\sigma_m^{3(1+\delta)}s^3\big)\Big)\big[1-\Phi(\bar x)\big],
$$
where $\bar x:=s\sqrt{m\psi''(s)}$. Hence, if $s=o(1/(m^{1/3}\sigma_m^{1+\delta}))$, which by~\eqref{eq:s-x_approx} is equivalent to $x=o(m^{1/6}/\sigma_m^{\delta})$, we obtain from~\eqref{eq:s-x_approx} that
\begin{equation}\label{eq:As4}
A_{s} = \bigg[1+O\bigg(\sigma_m^{3\delta}\frac{x^{3}}{\sqrt{m}}\bigg)\bigg]\big[1-\Phi(\bar{x})\big].
\end{equation}

We now want to verify that we can substitute $\bar{x}$ by $x$ in~\eqref{eq:As4}. Observe that, by~\eqref{eq:s-x_link}, we have
$$
|x-\bar{x}|= \sqrt{m}\,\Big|\frac{1}{\sigma_m}\psi'(s)-s\sqrt{\psi''(s)}\Big|,
$$
Using that $\sqrt{1+y}=1+\frac12y+O(y^2)$ for $|y|$ small, we obtain from Lemma~\ref{ldlma} that for $s=o(1/\sigma_m^{1+3\delta})$
$$
s\sqrt{\psi''(s)}=\sigma_ms\sqrt{1+\frac{\mu_3}{\sigma_m^2}s+O(\sigma_m^{2+4\delta}s^2)}=\sigma_ms+\frac{\mu_3}{2\sigma_m}s^2+O(\sigma_m^{3+4\delta}s^3)+O(\sigma_m^{3+6\delta}s^3).
$$
Another application of Lemma~\ref{ldlma} thus gives, for $s=o(1/\sigma_m^{1+3\delta})$,
$$
\Big|\frac{1}{\sigma_m}\psi'(s)-s\sqrt{\psi''(s)}\Big|=O(\sigma_m^{3+6\delta}s^3).
$$
Hence, for $s=o(1/(m^{1/3}\sigma_m^{1+\delta}))$, which by~\eqref{eq:s-x_approx} is equivalent to $x=o(m^{1/6}/\sigma_m^{\delta})$,
\eqref{eq:s-x_link} gives
\begin{equation}\label{eq:x-xbar}
|x-\bar x|=\sqrt{m}\,O(\sigma_m^{3+6\delta}s^3)=O\bigg(\sigma_m^{6\delta}\frac{x^{3}}{m}\bigg),
\end{equation}
from which we conclude that $|x-\bar x|=o(x)$.


Denote by $\varphi(y)$ the density of the standard normal distribution. Recall that as $y\rightarrow \infty$
\begin{equation}\label{eq:normal2}
\frac{\varphi(y)}{1-\Phi(y)}  = (1+o(1))y.
\end{equation}
Integrating the above expression between $x$ and $\bar{x}$ we find via~\eqref{eq:x-xbar} that for $x \rightarrow \infty$ such that 
$x=o(m^{1/6}/\sigma_m^{\delta})$,
$$ 
\Big|\log \frac{1-\Phi(\bar{x})}{1-\Phi(x)}\Big| = O\big(|x+\bar{x}| | \bar{x}-x|\big) = O\Big(\sigma_m^{6\delta}\frac{x^{4}}{m}\Big)=O\Big(\sigma_m^{3\delta}\frac{x^{3}}{\sqrt{m}}\Big),
$$
and finally, since $e^y=1+O(y)$ for $|y|$ small, we obtain that 
$$
\frac{1-\Phi(\bar{x})}{1-\Phi(x)} =
1+O\Big(\sigma_m^{3\delta}\frac{x^{3}}{\sqrt{m}}\Big).
$$
Now, together with $\eqref{eq:As4},$ we have that if $x\rightarrow\infty$ with $x=o(m^{1/6}/\sigma_m^{\delta})$, then
\begin{equation}\label{concludingeq2}
A_{s} = \bigg[1+O\bigg(\sigma_m^{3\delta}\frac{x^{3}}{\sqrt{m}}\bigg)\bigg]\big[1-\Phi(x)\big].
\end{equation}

It remains to estimate the error committed by substituting $U_m^\star$ by the $N(m\psi'(s),m\psi''(s))$ distribution in the right-hand side of~\eqref{errorequation2}. Let $Y_i$ denote a generic random variable distributed according to $V_i$. Recall that $Y_i$ has mean $\psi_{i}'(s)$ and variance $\psi_{i}''(s)$.
Let $\Phi_{s}$ denote the cumulative distribution function of the $N(m\psi'(s),m\psi''(s))$ distribution. By the Berry-Esseen Theorem (for non-identically distributed variables) we have that for all $y$ that
\begin{equation}\label{eq:BE}
\big|U_{m}^{\star}(y)-\Phi_{s}(y)\big|\le 3\big(m\psi''(s)\big)^{-3/2}\sum_{i=1}^{m}\E\big[|Y_i-\psi'_{i}(s)|^{3}\big]. 
\end{equation}
Integration by parts, using~\eqref{eq:s-x_link},~\eqref{errorequation2} and~\eqref{eq:BE}, gives
\begin{equation*}
\begin{split}
\big|1-F_{m}(x)-A_{s}\big|&= e^{m\psi(s)}\bigg|\int_{x\sigma_m\sqrt{m}}^\infty e^{-sy}\,U_m^{\star}(dy)-\int_{x\sigma_m\sqrt{m}}^\infty e^{-sy}\,\Phi_{s}(dy)\bigg| \\
&\le e^{m\psi(s)}\bigg(-\Big[e^{-sy}\big|U^{*}_{m}(y)-\Phi_{s}(y)\big|\Big]_{m\psi'(s)}^{\infty} + s\int_{m\psi'(s)}^{\infty}e^{-sy}\big|U^{*}_{m}(y)-\Phi_{s}(y)\big|\,dy\bigg) \\
 &\le6\big(m\psi''(s)\big)^{-3/2}\, e^{m[\psi(s)-s\psi'(s)]}\,\sum_{i=1}^{m}\E\big[|Y_{i}-\psi'_{i}(s)|^{3}\big].
\end{split}
\end{equation*}
We may bound the central absolute third moment, for $s=o(1/\sigma_m^{1+3\delta})$, as
$$
\E\big[|Y_i-\psi_{i}'(s)|^{3}\big]\le 2^3\big(\E\big[|Y_i|^3\big]+|\psi_{i}'(s)|^3\big)\le 8\,\E\big[Y_i^4\big]^{3/4}+O\big(\sigma_m^{3(1-\delta)}\big),
$$
where the second inequality follows from Jensen's inequality. By an expansion similar as before, we obtain for $s =o\big(1/\sigma_m^{1+3\delta}\big)$ that 
$f_i(s)=1+o(1)$ and $f_i^{(4)}(s)=O(\sigma_m^{4(1+\delta)})$, and hence that
$$
\E\big[Y_i^{4}\big]=\frac{f_{i}^{(4)}(s)}{f_{i}(s)}=O(\sigma_m^{4(1+\delta)}).
$$
Moreover, for $s =o\big(1/\sigma_m^{1+3\delta}\big)$, we have $\psi''(s)=\sigma_m^{2}(1+O(\sigma_m^{1+3\delta}s))=\sigma_m^2(1+o(1))$, which gives
\begin{equation}\label{eq:hendricks}
 \big|1-F_{m}(x)-A_{s}\big| = O(\sigma_m^{3\delta}/\sqrt{m})\cdot e^{m[\psi(s)-s\psi'(s)]}.
\end{equation}
 
Next, we recall from~\eqref{eq:As3} and the definition of $\bar x$ that
$$
A_{s} = e^{m[\psi(s)-s\psi'(s)]}\,e^{\bar{x}^{2}/2}\big[1-\Phi(\bar{x})\big]. 
$$
By~\eqref{eq:normal2}, and the observation that $x/\bar x=1+o(1)$ for $x=o(m^{1/6}/\sigma_m^{\delta})$, we find that
$$
A_{s}=(1+o(1))\frac{1}{\sqrt{2\pi}x}e^{m[\psi(s)-s\psi'(s)]}, 
$$
and hence, together with~\eqref{eq:hendricks}, that
$$
\big|1-F_{m}(x)-A_{s}\big|=O\Big(\sigma_m^{3\delta}\frac{x}{\sqrt{m}}\Big)A_{s}.
$$
In conclusion,
$$
1-F_{m}(x) = \Big[1+O\Big(\sigma_m^{3\delta}\frac{x}{\sqrt{m}}\Big)\Big]A_{s}, 
$$
which, together with~\eqref{concludingeq2} completes the proof.
\end{proof}

\section{Moderate deviations in first-passage percolation}\label{SecDistribution}

We now proceed to derive a moderate deviations theorem for first-passage percolation across thin rectangles. The result will follow from the moderate deviations theorem for triangular arrays (Theorem~\ref{ldthm2}) via an approach of Chatterjee and Dey~\cite{Chatterjee2009CentralLT}.

Recall the definition, in~\eqref{eq:t_rect}, that $t(n,k)$ denotes the left-right crossing time of the rectangle $R(n,k)=[0,n]\times[0,k]$. By the rectangle being `thin' refers to the height satisfying $k\ll n^\alpha$ for some $\alpha<2/3$. For the proof to go through, we will have to restrict the height even further.

Since we shall foremost be interested in the lower tail, i.e.\ deviations of $t(n,k)$ below its mean, we state the theorem accordingly. An analogous statement holds for the upper tail, i.e.\ for deviations above the mean.

\begin{theorem}\label{thm:mdfpp}
Suppose that $F$ is uniformly distributed on $\{a,b\}$ for some $0<a<b<\infty$.
Let $\alpha<1/10$ and suppose that $k_n\ll n^\alpha$. Then, for $1\ll x\ll n^{(1-10\alpha)/18}$ we have
\begin{equation*}
\P\Big(t(n,k_n)- \E[t(n,k_n)] <-x\sqrt{\Var(t(n,k_n))}\Big)= \Big[1-\Phi(x)\Big]\bigg[1+o\bigg(\frac{x^{3}}{n^{(1-10\alpha)/6}}\bigg)\bigg].
\end{equation*}
Moreover, the analogous statement holds for the upper tail.
\end{theorem}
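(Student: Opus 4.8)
The plan is to follow the approach of Chatterjee and Dey~\cite{Chatterjee2009CentralLT}, which expresses the crossing time $t(n,k_n)$ as a sum of (almost) independent contributions coming from disjoint vertical slabs of the rectangle, and then to feed this decomposition into our triangular-array moderate deviations result, Theorem~\ref{ldthm2}. Concretely, I would partition the rectangle $R(n,k_n)$ into $m\asymp n/L$ consecutive blocks of width $L=L(n)$, chosen so that $k_n\ll L\ll n$; the optimal crossing path, restricted to each block, has a weight $X_i^{(m)}$, and the key geometric input (from the restricted-fluctuation structure and the subadditivity/concentration estimates in~\cite{ahl15,Chatterjee2009CentralLT}) is that these block weights are, up to a small correction, independent and identically distributed, with the total crossing time satisfying $t(n,k_n)=\sum_{i=1}^m X_i^{(m)}+(\text{error})$, where the error has negligible contribution in the relevant moderate-deviations window. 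One sets $\sigma_m^2 = \frac1m\sum_i\Var(X_i^{(m)})$, which by the lower variance bound alluded to in the remark after Theorem~\ref{thm:main} is $\asymp L$, so $\sigma_m\asymp\sqrt L\ge1$.

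The second step is to verify the moment hypothesis~\eqref{eq:sigma_cond} of Theorem~\ref{ldthm2}, i.e.\ that $\E[|X_i^{(m)}|^j]\le j!\,(C\sigma_m)^{(1+\delta)j}$ for a suitable $\delta\in[0,1)$. Since each block weight is the minimum weight of a path crossing a rectangle of width $L$ and height $k_n$, it is bounded above by $bL$ deterministically, and below it concentrates around its mean $\mu_{k_n}L\asymp L$; the fluctuations are controlled by Kesten-type concentration (bounded-difference / Azuma, or the concentration results in~\cite{ahl15}) which give subexponential tails at scale $\sqrt{L}\cdot(\text{something})$. Matching this against the required bound forces a relation between the exponent $\delta$ and the admissible height $k_n\ll n^\alpha$: roughly, one needs $\sigma_m^{(1+\delta)j}$ to dominate the $j$th moment of a variable of size $O(L)$ and fluctuation $O(\sqrt L\cdot\mathrm{poly}(k_n))$, which produces $\delta$ of order a fixed fraction, and the constraint $\alpha<1/10$ in the theorem statement should emerge precisely from optimising the block width $L$ against this moment bound while keeping $\sigma_m^\delta\ll m^{1/6}$. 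Then Theorem~\ref{ldthm2} applied to $(X_i^{(m)}-\E X_i^{(m)})$ yields, for $1\ll x\ll m^{1/6}/\sigma_m^\delta$,
\begin{equation*}
\P\Big(\sum_i X_i^{(m)}-\E\Big[\sum_i X_i^{(m)}\Big]>x\,\sigma_m\sqrt m\Big)=\big[1-\Phi(x)\big]\Big[1+O\big(\sigma_m^{3\delta}x^3/\sqrt m\big)\Big],
\end{equation*}
and after substituting $m\asymp n/L$, $\sigma_m\asymp\sqrt L$, and the chosen value of $L$ as a power of $n$, the window $m^{1/6}/\sigma_m^\delta$ and the error term $\sigma_m^{3\delta}x^3/\sqrt m$ translate into $n^{(1-10\alpha)/18}$ and $x^3/n^{(1-10\alpha)/6}$ respectively. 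The lower-tail statement is obtained by the same argument applied to $-X_i^{(m)}$ (equivalently, noting the whole analysis is symmetric in sign of the deviation), and the "moreover" for the upper tail is immediate.

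The final step, which I expect to be the main obstacle, is controlling the \emph{dependence} between block weights and the discrepancy between $t(n,k_n)$ and the clean sum $\sum_i X_i^{(m)}$: the optimal restricted path does not decouple exactly across blocks, because where it enters and exits a block is itself random and correlated with neighbouring blocks, and one must show that conditioning on the "interface heights" (the at most $k_n$ possible entry/exit positions) makes the blocks conditionally independent, then average over the $O(k_n^m)$ — really $O(k_n)$ per interface — interface configurations while showing this averaging costs only a negligible factor in the moderate-deviations regime. This is exactly the technical heart of~\cite{Chatterjee2009CentralLT}, and the new feature here is that we need it not just at the central-limit scale but uniformly over the moderate-deviations window $x\ll n^{(1-10\alpha)/18}$, which is why the height restriction must be tightened from $n^{1/3}$ (as in~\cite{Chatterjee2009CentralLT}) to $n^{1/10}$: the number of interface configurations grows like $k_n^{n/L}$, and taming the associated entropy factor against the Gaussian tail $e^{-x^2/2}$ is what pins down the exponent. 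I would handle this by a careful union bound over interface profiles combined with the exponential concentration of each block weight, isolating a "good" event on which the path's transversal behaviour is regular, and absorbing the complement into the $o(\cdot)$ error using that the restricted crossing time has the anticoncentration we are simultaneously establishing (or, to avoid circularity, using only the a priori variance lower bound and Kesten-type upper tail bounds).
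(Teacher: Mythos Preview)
Your overall architecture (cut $R(n,k_n)$ into $m$ slabs, apply Theorem~\ref{ldthm2} to the slab contributions, then transfer back) is the same as the paper's, and your first two steps are on the right track. The third step, however, reveals a genuine misconception that makes the argument much harder than it is.

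You define $X_i^{(m)}$ as ``the optimal crossing path, restricted to each block'', and then spend the whole final step worrying about the dependence this creates: interface heights, conditioning on entry/exit points, a union bound over $k_n^{m}$ interface profiles, entropy factors competing with the Gaussian tail. None of this is needed, because that is the wrong choice of block variables. The paper instead takes $Y_i$ to be the \emph{left-right crossing time of the $i$-th sub-rectangle}, defined purely in terms of the edge weights inside that sub-rectangle. Since the sub-rectangles are edge-disjoint, the $Y_i$ are genuinely independent --- there is no dependence to disentangle. The link to $t(n,k_n)$ is then the elementary deterministic sandwich
\[
\sum_{i=1}^{m} Y_i \;\le\; t(n,k_n) \;\le\; \sum_{i=1}^{m} Y_i + b\,m\,k_n,
\]
because any path crossing the big rectangle crosses each sub-rectangle (lower bound), and concatenating the $m$ local geodesics with at most $k_n$ vertical edges at each interface gives a legal crossing (upper bound). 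This deterministic error $b\,m\,k_n$ is what you have to absorb, not an entropy factor; with $m\asymp n^{1-\gamma}$ and $\gamma=2/3$ one checks it is $o(\sigma_m\sqrt{m})$ for $\alpha<1/10$.

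Consequently, your explanation of where the threshold $\alpha<1/10$ comes from is also off. It does not come from controlling interface entropy. It comes from the \emph{gap between the upper and lower variance bounds} for the slab crossing times: Lemma~\ref{lma:CD_variance} only gives $c\,\ell_n/k_n\le\Var(Y_i)\le\ell_n/c$, so $\sigma_m$ is known only up to a factor $k_n^{1/2}$, which forces you to take $\delta=\alpha/(\gamma-\alpha)>0$ in the moment condition~\eqref{eq:sigma_cond} (via Lemma~\ref{lma:CD_moments}). The requirement $\sigma_m^\delta\ll m^{1/6}$, together with the need to swallow the deterministic error $b\,m\,k_n$, is what produces the exponents $(1-10\alpha)/18$ and $(1-10\alpha)/6$ after optimising in $\gamma$ (the paper simply sets $\gamma=2/3$).
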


While we here focus on weight distributions supported on two points, we mention that the proof of the above theorem goes through without modification for bounded weight distributions for which the lower bound in Lemma~\ref{lma:CD_variance} remains valid. (In general, the necessary condition for this is that $F(0)<1/2$. When $(k_n)_{n\ge1}$ is bounded, no additional condition is needed.)


\subsection{First-passage percolation across thin rectangles}

First-passage percolation on rectangular subsets of the square lattice have previously been considered by Ahlberg~\cite{ahl15} and Chatterjee and Dey~\cite{Chatterjee2009CentralLT}. In both papers the authors prove asymptotic normality for the crossing time of thin rectangles, though by different means. In~\cite{ahl15} the author adopts a regenerative approach that applies for fixed $k$, but fails for rectangles with height growing polynomially in $n$. In~\cite{Chatterjee2009CentralLT} the authors develop a different approximation scheme that works for rectangles with height $k_n=o(n^\alpha)$ for some $\alpha<1/3$. It is the latter approach, from~\cite{Chatterjee2009CentralLT}, that will be of interest to us here, as it will apply to rectangles of growing height.

The idea from~\cite{Chatterjee2009CentralLT} is to chop the rectangle $[0,n]\times[0,k-1]$ up into smaller pieces, and approximate the crossing time of the original rectangle with the sum of the crossing times of the shorter stubs. This approximates the crossing time of the original rectangle with a sum of independent variables with roughly the same distribution. Chatterjee and Dey show in~\cite{Chatterjee2009CentralLT} that if the number of independent variables is large in comparison to the width of the original rectangle, then the error committed in the approximation can be controlled.

We shall below adopt their approach in the proof of Theorem~\ref{thm:mdfpp}. Their argument will here require a somewhat stronger restriction on the rate at which the rectangle grows. This restriction arises from the gap in upper and lower bounds on the moments of the crossing times, which will force us to apply Theorem~\ref{ldthm2} with some $\delta>0$. The following two lemmas (from~\cite{Chatterjee2009CentralLT}) bound the central moments on rectangle crossing times, and will be used in the proof of Theorem~\ref{thm:mdfpp}.

\begin{lemma}\label{lma:CD_variance}
Then there exists $c>0$ such that for all $n\ge k\ge1$
\begin{equation*}
    c\frac{n}{k}\le\Var\big(t(n,k)\big)\le\frac1cn.
\end{equation*}
\end{lemma}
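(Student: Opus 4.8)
The plan is to establish the two bounds separately, since they rely on different tools. For the upper bound $\Var(t(n,k))\le n/c$, I would observe that $t(n,k)$ is a function of the edge weights inside the rectangle $R(n,k)$ with bounded differences: changing one edge weight changes the crossing time by at most $b-a$. The Efron--Stein inequality (or the bounded-differences martingale variance bound, cf.~\cite[Corollary~3.2]{BLM13}) then gives $\Var(t(n,k))\le (b-a)^2\cdot N$, where $N$ is the number of edges that can ever lie on an optimal crossing path. Naively $N$ is of order $nk$, which is too weak. To get down to order $n$, I would instead use the fact that the minimising path uses only $O(n)$ edges (it is a shortest path across a rectangle of width $n$, and by the same argument as Kesten's linear bound on geodesic length, restricted to the rectangle, its length is $O(n)$ with the implied constant uniform in $k$), and appeal to the sharper Efron--Stein-type estimate that bounds the variance by $(b-a)^2$ times the expected number of edges on \emph{the} chosen geodesic — which is $O(n)$. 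This yields $\Var(t(n,k))\le n/c$ with $c$ depending only on $a,b$.

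For the lower bound $\Var(t(n,k))\ge c\,n/k$, the natural approach is a martingale/orthogonal-decomposition argument along columns. Partition the rectangle into its $n$ vertical columns $C_1,\dots,C_n$, each containing $O(k)$ edges, and write $t(n,k)=\E[t(n,k)]+\sum_{j=1}^n \Delta_j$, where $\Delta_j:=\E[t(n,k)\mid \F_j]-\E[t(n,k)\mid \F_{j-1}]$ and $\F_j$ is the $\sigma$-algebra generated by the weights in columns $1,\dots,j$. By orthogonality of martingale increments, $\Var(t(n,k))=\sum_{j=1}^n \E[\Delta_j^2]$. The crux is then to show $\E[\Delta_j^2]\ge c/k$ for a positive proportion of the columns $j$. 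Here one argues that, conditionally on the environment outside column $j$, every crossing path must traverse column $j$ using at least one edge, and with probability bounded below (uniformly in $n,k$) resampling the $O(k)$ weights in column $j$ shifts $t(n,k)$ by a definite amount $\gtrsim (b-a)$: for instance, on the event that the conditional-optimal crossing is forced through a particular location in column $j$, flipping the relevant edge there from $a$ to $b$ costs $b-a$, and this event has probability $\gtrsim 1/k$ because there are $O(k)$ candidate vertical positions and the optimal one is essentially equidistributed. Averaging, $\E[\Delta_j^2]\gtrsim 1/k$, and summing over the $\Theta(n)$ columns gives $\Var(t(n,k))\ge c\,n/k$.

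The main obstacle is the lower bound, specifically making rigorous the claim that resampling a single column perturbs the crossing time by order $b-a$ with conditional probability of order $1/k$. The subtlety is that the optimal crossing path through column $j$ could pass through many different vertical locations depending on the rest of the environment, and one must rule out the degenerate scenario in which a column's contribution is dominated by a single edge that is $a$ with overwhelming probability regardless of resampling. One clean way to handle this is to use the Margulis--Russo-type identity relating $\partial\,\E[t(n,k)]/\partial$(weight of $e$) to the probability that $e$ lies on the geodesic, together with the fact that the geodesic must use at least $n$ horizontal edges spread across the $n$ columns; a pigeonhole/convexity argument then forces $\sum_{e}\P(e\in\pi)\gtrsim n$, and combining this with the bounded height $k$ of each column localises enough pivotal mass per column to drive the variance lower bound. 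It is here — in the uniformity of the constant $c$ as $k\to\infty$ with $k\le n$ — that care is needed, and where the hypothesis $F(a)<1/2$ (or more generally $F(0)<1/2$, as flagged in the remark following the theorem) enters: it guarantees that a column cannot be crossed "for free", so that a positive density of columns genuinely contribute.
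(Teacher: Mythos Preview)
Your upper bound sketch is fine and matches the paper's (which simply cites Proposition~5.1 of \cite{Chatterjee2009CentralLT}): Efron--Stein together with the $O(n)$ length of any geodesic gives $\Var(t(n,k))=O(n)$ uniformly in $k$.

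For the lower bound your column-martingale route differs from the paper's and has a genuine gap. The heuristic that the optimal vertical position in column $j$ is ``essentially equidistributed'' among $k$ choices is not justified, and your Margulis--Russo fallback does not close it: knowing $\sum_e\P(e\in\pi)\gtrsim n$ is not enough, because an edge $e\in\pi$ with $\omega_e=a$ contributes nothing to the variance (flipping it to $b$ need not change $t(n,k)$ if an alternate geodesic avoids $e$). What one actually needs is a linear lower bound on the number of weight-$b$ edges on the geodesic. The paper proceeds exactly this way: via a slight sharpening of Lemma~4.1 of \cite{Chatterjee2009CentralLT} one has
\[
\Var\big(t(n,k)\big)\;\ge\;\frac{(b-a)^2}{8kn}\,\Big(\E\big[\#\{e\in\pi(n,k):\omega_e=b\}\big]\Big)^2,
\]
so the whole problem reduces to showing $\E[\#\{e\in\pi(n,k):\omega_e=b\}]\ge cn$.

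This is also where your identification of the operative hypothesis is off. The weight distribution here is uniform on $\{a,b\}$, so $F(a)=\tfrac12$, \emph{not} $F(a)<\tfrac12$. At $F(a)=\tfrac12$ the $a$-edges form a critical bond percolation, and the standard subcritical argument (no long $a$-paths, hence the geodesic is forced onto many $b$-edges) is unavailable. The paper handles this critical case separately via Proposition~\ref{prop:empirical} in the appendix, adapting a van den Berg--Kesten coarse-graining and resampling argument under the weaker condition $F(a)<\vec p_c$; since $\vec p_c(2)>\tfrac12$, this covers the uniform case. Without this ingredient --- which is really the heart of the matter --- the lower bound is incomplete.
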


\begin{proof}
    The upper bound follows from Proposition~5.1 in~\cite{Chatterjee2009CentralLT}. The lower bound is a slight sharpening of Lemma~4.1 in~\cite{Chatterjee2009CentralLT}. Indeed, the proof of that lemma gives that
    $$
    \Var\big(t(n,k)\big)\ge\frac{1}{2kn}\E\bigg[\sum_{e\in\pi(n,k)}\E[(\omega_e-\eta)_+|\omega_e]\bigg]^2,
    $$
    where $\pi(n,k)$ denotes the set of edges belonging to some geodesic for $t(n,k)$, $\eta$ is an independent copy of $\omega_e$, and $x_+=\max\{x,0\}$. When $F$ put equal mass to $a$ and $b$, then
    $$
    \E[(\omega_e-\eta)_+|\omega_e]=\frac{\omega_e-a}{2},
    $$
    leading to the bound
    $$
    \Var\big(t(n,k)\big)\ge\frac{(b-a)^2}{8kn}\E\big[\#\{e\in\pi(n,k):\omega_e=b\}\big]^2.
    $$
    Hence, to complete the proof, it will suffice to argue that there exists a constant $c>0$ such that for all $n\ge k\ge1$ 
    \begin{equation}\label{eq:empiric}
    \E\big[\#\{e\in\pi(n,k):\omega_e=b\}\big]\ge cn.
    \end{equation}

    In~\cite{Chatterjee2009CentralLT}, the authors relied on standard percolation results to argue that when $F(a)<1/2$, then $\pi(n,k)$ would have to pick up a large number of weight $b$ edges, and hence that~\eqref{eq:empiric} holds. In Proposition~\ref{prop:empirical}, stated and proved in the appendix of this paper, we show that~\eqref{eq:empiric} remains true also when $F(a)=1/2$, which is the case under consideration here, by revisiting an argument of van de Berg and Kesten~\cite{vdBK93}. This completes the proof of the lemma.
\end{proof}

The next result, also from~\cite{Chatterjee2009CentralLT}, is a bound on central moments.

\begin{lemma}\label{lma:CD_moments}
There exists $C>0$ such that for all $j\ge2$, $n\ge1$ and $k\le\sqrt{n}$ we have
\begin{equation*}
    \E\Big[\big|t(n,k)-\E[t(n,k)]\big|^j\Big]\le(Cj)^jn^{j/2}.
\end{equation*}
\end{lemma}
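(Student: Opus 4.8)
The plan is to establish a sub-exponential concentration estimate for $t(n,k)$ at scale $\sqrt n$, via the martingale method of Kesten adapted to left--right crossings of a thin rectangle, and then to read off the moment bound. The structural fact that makes scale $\sqrt n$ (rather than $\sqrt{nk}$) attainable is that, since the weights lie in $\{a,b\}$, every geodesic realising $t(n,k)$ uses at most $bn/a$ edges: its total weight is at most $t(n,k)\le bn$, because the straight horizontal crossing of $R(n,k)$ uses $n$ edges of weight at most $b$ each, while each edge weight is at least $a$. Thus, although $t(n,k)$ is a function of $\Theta(nk)$ independent weights, at most $O(n)$ of them can influence its value; the hypothesis $k\le\sqrt n$ is comfortably more than enough for this.

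First, fix an enumeration $e_1,\dots,e_N$ of the edges of $R(n,k)$, put $\F_\ell:=\sigma(\omega_{e_1},\dots,\omega_{e_\ell})$, and consider the Doob martingale $U_\ell:=\E[t(n,k)\mid\F_\ell]$, with increments $\Delta_\ell:=U_\ell-U_{\ell-1}$, $U_0=\E[t(n,k)]$ and $U_N=t(n,k)$. Using the standard coupling representation of $\Delta_\ell$ through resampling $\omega_{e_\ell}$ by an independent copy, together with the fact that lowering a single weight changes $t(n,k)$ by at most $b-a$ and only when the affected edge lies on a geodesic of the modified configuration, one obtains $|\Delta_\ell|\le(b-a)\rho_\ell$ with $\rho_\ell\in[0,1]$ a conditional probability that $e_\ell$ lies on such a geodesic. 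The heart of the matter is then to show, using $\rho_\ell^2\le\rho_\ell$ and summing the ``$e_\ell$ lies on a geodesic'' events against the deterministic length bound $|\pi(n,k)|\le bn/a$, that
$$
\sum_{\ell=1}^N\E\big[\Delta_\ell^2\mid\F_{\ell-1}\big]\le Cn\qquad\text{and}\qquad\sum_{\ell=1}^N\E\big[|\Delta_\ell|^j\big]\le C(b-a)^jn.
$$

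With these two estimates in hand the conclusion follows from a standard martingale moment inequality. For instance, Burkholder--Davis--Gundy (or Rosenthal's inequality for martingales) gives
$$
\E\big[|t(n,k)-\E[t(n,k)]|^j\big]\le(Cj)^{j/2}\,\E\Big[\Big(\sum_{\ell=1}^N\E[\Delta_\ell^2\mid\F_{\ell-1}]\Big)^{j/2}\Big]+(Cj)^j\sum_{\ell=1}^N\E\big[|\Delta_\ell|^j\big],
$$
and since the bracketed sum is at most $Cn$ almost surely while $n\le n^{j/2}$ for $j\ge2$, the right-hand side is at most $(Cj)^jn^{j/2}$. (Equivalently, the same two estimates give via Freedman's inequality that $\P(|t(n,k)-\E[t(n,k)]|\ge\lambda)\le 2\exp\big(-c\min\{\lambda^2/n,\,\lambda/(b-a)\}\big)$ for all $\lambda\ge0$, and then $\E[|Y|^j]=\int_0^\infty j\lambda^{j-1}\P(|Y|>\lambda)\,d\lambda$ gives the bound.)

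The step I expect to be the main obstacle is the first displayed estimate: almost-sure (or high-moment) control of $\sum_\ell\E[\Delta_\ell^2\mid\F_{\ell-1}]$ by $Cn$. A crude bound sees only the $\Theta(nk)$ edges of $R(n,k)$ and yields $Cnk$, which would give $n^{3j/4}$ rather than $n^{j/2}$; one genuinely needs Kesten's argument that the conditional variances are governed by the edges a geodesic can occupy, whose number is $O(n)$ uniformly in $k\le n$. Everything else is routine. Alternatively, since this lemma is quoted from~\cite{Chatterjee2009CentralLT}, one may simply cite the corresponding moment bound there, whose proof applies here without change.
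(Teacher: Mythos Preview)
Your proposal is correct and matches the paper's approach: the paper simply cites Chatterjee--Dey~\cite{Chatterjee2009CentralLT} (Proposition~5.1 via their Lemmas~5.4 and~5.5), and those lemmas carry out precisely the Kesten-style martingale argument you sketch---a Burkholder/Rosenthal-type moment inequality for the Doob martingale combined with control of the predictable quadratic variation through the deterministic geodesic-length bound $|\pi(n,k)|\le bn/a$. You correctly flag the almost-sure bound $\sum_\ell\E[\Delta_\ell^2\mid\F_{\ell-1}]\le Cn$ as the nontrivial step, and your final remark that one may simply cite~\cite{Chatterjee2009CentralLT} is exactly what the paper does.
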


\begin{proof}
    This is more precise version of Proposition~5.1 in~\cite{Chatterjee2009CentralLT}, which is obtained by combining Lemmas~5.4 and~5.5 of the same paper.
\end{proof}

\subsection{Proof of Theorem~\ref{thm:mdfpp}}

Fix $\alpha<1/10$ and suppose that $k_n\le n^\alpha$ for large values of $n$. Let $\gamma\in(1/2,1)$ be a parameter to be determined later, and set $m_n:=\lfloor n^{1-\gamma}\rfloor$ and $\ell_n:=\lfloor n/m_n\rfloor$. We partition the interval $[0,n]$ into $m_n$ subintervals of length either $\ell_n$ or $\ell_n+1$ (where consecutive intervals share endpoints). Denote the intervals by $I_1,I_2,\ldots,I_{m_n}$ and let $Y_i$ denote the left-right crossing time of the rectangle $I_i\times[0,k_n-1]$. Since the intervals are disjoint (except for their boundary points) the resulting variables $Y_1,Y_2,\ldots,Y_{m_n}$ are independent and distributed as $t(\ell_n,k_n)$ or $t(\ell_n+1,k_n)$, depending on the length of the corresponding interval. 

Since every path crossing $[0,n]\times[0,k_n-1]$ from left to right can be partitioned into paths crossing the intervals $I_1,I_2,\ldots,I_{m_n}$, it follows that the sum of the $Y_i$' s is a lower bound on $t(n,k_n)$.
Moreover, since the edge weights are bounded by $b>0$, and since there are no more that $k_n$ edges along the boundary between two consecutive rectangles $I_i\times[0,k_n-1]$ and $I_{i+1}\times[0,k_n-1]$, we obtain that
\begin{equation}\label{eq:Tkn_approx}
\sum_{i=1}^{m_n}Y_i\;\le\; t(n,k_n)\;\le\;\sum_{i=1}^{m_n}Y_i+
b\,m_nk_n.
\end{equation}
Let $X_i:=Y_i-\E[Y_i]$ be the centered version of $Y_i$, and set $S_n:=\sum_{i=1}^{m_n}X_i$. Taking expectation in~\eqref{eq:Tkn_approx}, and subtracting the result from the same, yields
\begin{equation}\label{eq:Tkn-E_approx}
    S_n-b\,m_nk_n\;\le\; t(n,k_n)-\E[t(n,k_n)]\;\le\;S_n+b\,m_nk_n.
\end{equation}

For $n\ge1$, let
$$
\sigma_n:=\sqrt{\frac{1}{m_n}\Var(S_n)}.
$$
Since $\ell_n\sim n^\gamma$ and $k=o(n^\alpha)$, it follows from Lemma~\ref{lma:CD_variance} that there exists $c>0$ so that for all $n\ge1$
$$
c\,n^{\gamma-\alpha}\le\Var(X_i)\le\frac1c\,n^\gamma,
$$
and hence that
\begin{equation}\label{eq:sigma_bound}
\sqrt{c}\,n^{(\gamma-\alpha)/2}\le\sigma_n\le\frac{1}{\sqrt{c}}\,n^{\gamma/2}.
\end{equation}
Moreover, by the reverse triangle inequality,
$$
\bigg|\sqrt{\E\big[(t(n,k_n)-\E[t(n,k_n)])^{2}\big]}- \sqrt{\E\big[S_n^{2}\big]} \bigg| \leq \sqrt{\E\Big[\big(t(n,k_n)-\E[t(n,k_n)] -S_n\big)^2\Big]}\le b\,m_nk_n,
$$
which with~\eqref{eq:sigma_bound} gives
\begin{equation}\label{eq:var_sigm}
\bigg|\frac{\sqrt{\Var(t(n,k_n))}}{\sigma_n\sqrt{m_n}}-1\bigg|\le\frac{b\,k_n\sqrt{m_n}}{\sqrt{c}\,n^{(\gamma-\alpha)/2}}=O\Big(\frac{1}{n^{(2\gamma-1-3\alpha)/2}}\Big).
\end{equation}
The above is $o(1)$ under the condition that $2\gamma>1+3\alpha$.

From Lemma~\ref{lma:CD_moments} and~\eqref{eq:sigma_bound} we obtain in turn (since $\gamma>2\alpha$) that
\begin{equation}\label{eq:mom_bound}
    \E\big[|X_i|^j\big]\le(Cj)^jn^{\gamma j/2}\le j!(Ce)^j\sigma_n^{(1+\alpha/(\gamma-\alpha))j}.
\end{equation}
In particular, this means that $X_1,X_2,\ldots,X_{m_n}$ satisfy~\eqref{eq:sigma_cond} with $\delta=\alpha/(\gamma-\alpha)$. We note, in addition, that
\begin{equation}\label{eq:error_b}
\frac{\sigma_n^{\alpha/(\gamma-\alpha)}}{m_n^{1/6}}=O\Big(\frac{n^{(\gamma/2)\alpha/(\gamma-\alpha)}}{n^{(1-\gamma)/6}}\Big)
=O\Big(\frac{1}{n^{(1-\gamma)/6-\alpha\gamma/[2(\gamma-\alpha)]}}\Big).
\end{equation}

Let $\beta_1:=(2\gamma-1-3\alpha)/2$ and $\beta_2:=(1-\gamma)/6-\alpha\gamma/[2(\gamma-\alpha)]$ denote the exponents in the right-hand sides of~\eqref{eq:var_sigm} and~\eqref{eq:error_b}, respectively. Now, set $\gamma=2/3$. This gives
$$
\beta_1=\frac{1-9\alpha}{6}\quad\text{and}\quad\beta_2>\frac{1-10\alpha}{18},
$$
which for $\alpha<1/10$ are strictly positive. Hence, Theorem~\ref{ldthm2} applies and gives that for $1\ll x\ll n^{\beta_2}$ that
\begin{equation}\label{eq:md_app}
    \P\bigg(\frac{S_n}{\sigma_n\sqrt{m_n}}<-x\bigg)=\Big[1+O\Big(\frac{x^3}{n^{3\beta_2}}\Big)\Big]\big[1-\Phi(x)\big].
\end{equation}

Now, let
$$
x_\pm:=x\bigg(1\pm\frac{b\,m_nk_n}{x\sqrt{\Var(t(n,k_n))}}\bigg)\frac{\sqrt{\Var(t(n,k_n))}}{\sigma_n\sqrt{m_n}}.
$$
By~\eqref{eq:Tkn-E_approx} we see that
$$
\P\bigg(\frac{t(n,k_n)-\E[t(n,k_n)]}{\sqrt{\Var(t(n,k_n))}}<-x\bigg)\le\P\bigg(\frac{S_n}{\sigma_n\sqrt{m_n}}<-x_-\bigg)=\Big[1+O\Big(\frac{x_-^3}{n^{3\beta_2}}\Big)\Big]\big[1-\Phi(x_-)\big].
$$
Analogously, we obtain
$$
\P\bigg(\frac{t(n,k_n)-\E[t(n,k_n)]}{\sqrt{\Var(t(n,k_n))}}<-x\bigg)\ge\P\bigg(\frac{S_n}{\sigma_n\sqrt{m_n}}<-x_+\bigg)=\Big[1+O\Big(\frac{x_+^3}{n^{3\beta_2}}\Big)\Big]\big[1-\Phi(x_+)\big].
$$

Finally, by~\eqref{eq:var_sigm}, we have
$$
x_\pm=x\Big(1+O\Big(\frac{1}{n^{\beta_1}}\Big)\Big),
$$
which give $1-\Phi(x_\pm)=[1+O(n^{-\beta_1})][1-\Phi(x)]$, which by the established bounds on $\beta_1$ and $\beta_2$ completes the proof.

\section{The analysis of independent rectangle crossings}\label{SecSideThm}

In this section we formulate and prove a preliminary version of our main theorem, concerning the minimum crossing time of a large number of {\em disjoint} rectangles. Since the rectangles are disjoint, the corresponding crossing times are independent, which facilitates the analysis.

Recall that $t(n,k)$ denotes the crossing time of the rectangle $[0,n]\times[0,k-1]$, and that $t_i(n,k)$ denotes the translation of $t(n,k)$ along the vector $(0,i)$, so that $t_i(n,k)$ is the crossing time of the rectangle $[0,n]\times[i,i+k-1]$. Finally, set
$$
\tau^\star(n,k):=\min\big\{t_{(i-1)k}(n,k):i=1,2,\ldots,\lfloor n/k\rfloor\big\}.
$$
Note that the different rectangles are disjoint and together tile the square $[0,n]\times[0,n-1]$. Consequently, $\tau^\star(n,k)$ is the minimum of $\lfloor n/k\rfloor$ independent copies of $t(n,k)$, and this independence will facilitate the analysis of $\tau^\star(n,k)$. We remark that for the choice $k=1$ we have $\tau^\star(n,k)=\tau(n,k)$, and the two are equivalent to the polynomial Tribes function on $n^2$ bits with $\lambda=1/2$.

\begin{theorem}\label{thm:preliminary}
    Suppose that $F$ is supported on $\{a,b\}$ for some $0<a<b<\infty$.
    Suppose that $k_n=o(n^{1/22})$. For every $\beta\in(0,1)$, and any $\beta$-quantile $q_\beta$ of $\tau^\star(n,k_n)$, we have
    $$
    \lim_{n\to\infty}\P\big(\tau^\star(n,k_n)<q_\beta\big)=1-\lim_{n\to\infty}\P\big(\tau^\star(n,k_n)>q_\beta\big)=\beta,
    $$
    and the function $f_n^\star:=\ind_{\{\tau^\star(n,k_n)>q_\beta\}}$ is noise sensitive.
\end{theorem}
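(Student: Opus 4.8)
The plan is to exploit the fact that $\tau^\star(n,k_n)$ is a minimum of $N_n := \lfloor n/k_n\rfloor$ \emph{independent} copies of $t(n,k_n)$, and to feed the moderate deviations estimate of Theorem~\ref{thm:mdfpp} into the analysis of this minimum. The strategy mirrors the treatment of the polynomial Tribes function in Section~\ref{sectribes}, with $t(n,k_n)$ playing the role of the tribe-crossing variable. Writing $\mu_n := \E[t(n,k_n)]$, $v_n := \sqrt{\Var(t(n,k_n))}$, and $G_n$ for the distribution function of the normalised variable $(t(n,k_n)-\mu_n)/v_n$, independence gives
$$
\P\big(\tau^\star(n,k_n) \ge \mu_n - x\,v_n\big) = \big(G_n(-x)\big)^{N_n} = \big(1-(1-G_n(-x))\big)^{N_n}.
$$
So the first step is to locate the scale $x=x_n$ at which $1-G_n(-x_n)\asymp 1/N_n$: since $N_n \asymp n^{1-\alpha}$ (roughly), and $1-\Phi(x)\approx \frac{1}{x\sqrt{2\pi}}e^{-x^2/2}$, one expects $x_n \sim \sqrt{2\log N_n} \sim \sqrt{2(1-\alpha)\log n}$, well inside the admissible window $1\ll x\ll n^{(1-10\alpha)/18}$ from Theorem~\ref{thm:mdfpp} once $\alpha<1/22$ (this is where the $1/22$ threshold is consumed, together with the $1/22$ loss from the anticoncentration of $t(n,k_n)$ needed to control lattice effects, cf.\ Lemma~\ref{lma:CD_variance}). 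Using Theorem~\ref{thm:mdfpp} to replace $1-G_n(-x)$ by $(1+o(1))(1-\Phi(x))$ uniformly on this range, one then defines an explicit threshold $s_{n,\beta}$ (the analogue of $s_{\lambda,\beta}$ in Section~\ref{sectribes}), namely the value of $x$ for which $N_n(1-\Phi(x)) = \log(1/\beta)$, and shows by the same computation as in Lemma~\ref{case1conv} that $\P(\tau^\star(n,k_n) \le \mu_n - s_{n,\beta}v_n) \to \beta$. A squeezing argument identical to Lemma~\ref{tribesquantile} then sandwiches any genuine $\beta$-quantile $q_\beta$ between $\mu_n - s_{n,\beta\mp\eps}v_n$, which yields $\P(\tau^\star(n,k_n)<q_\beta)\to\beta$ and $\P(\tau^\star(n,k_n)>q_\beta)\to\beta$ as in the proof of Proposition~\ref{maintribesthm}.

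For noise sensitivity, I would again verify the BKS criterion $\sum_e \Inf_e(f_n^\star)^2 \to 0$. Fix an edge $e$ lying in rectangle number $i_0$; by symmetry across rectangles it suffices to bound a representative influence and multiply by the total edge count $\asymp n^2$. For $e$ to be pivotal one needs $t_{i_0}(n,k_n)$ to be within $b-a$ of the threshold $q_\beta$ \emph{and} (with probability tending to $\beta$, by independence of the other rectangles, exactly as in the Tribes computation) none of the other rectangles to beat $q_\beta$; moreover within rectangle $i_0$ the edge $e$ must actually be decisive, which forces $e$ to lie on a (near-)geodesic of that rectangle and forces $t_{i_0}(n,k_n)$ into a window of width $O(1)$ around $q_\beta$. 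Thus
$$
\Inf_e(f_n^\star) \le (\beta+o(1))\,\P\big(e\in\pi(n,k_n),\ t(n,k_n)\in[q_\beta-(b-a),\,q_\beta+(b-a)]\big),
$$
and summing the ``$e\in\pi(n,k_n)$'' factor over the $n$ edges in the width direction uses Kesten's linear bound on geodesic length to give a factor $O(1/n)$ per rectangle, while the anticoncentration bound of Theorem~\ref{thm:fluctuations}, $\sup_x\P(t(n,k_n)\in[x,x+c]) = o(n^{-1/22+o(1)})$ (or more simply $o(1)$), controls the distributional factor. The upshot is $\sum_e \Inf_e(f_n^\star)^2 = O(n^2)\cdot O(n^{-2})\cdot o(1) = o(1)$, so BKS gives noise sensitivity.

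The main obstacle is the \textbf{uniformity of the moderate deviations approximation at the relevant scale}: one needs Theorem~\ref{thm:mdfpp} to hold with an error term that is $o(1)$ (in fact $o(1/\log n)$, since it gets raised to the power $N_n$ and must survive) precisely at $x_n \asymp \sqrt{\log n}$, which is why the error $x^3/n^{(1-10\alpha)/6}$ must be controlled and why the admissible $\alpha$ shrinks from the $1/10$ of Theorem~\ref{thm:mdfpp} down to $1/22$. A secondary subtlety is handling \emph{lattice effects}: $t(n,k_n)$ is a discrete variable, so ``$\P(\tau^\star < q_\beta)$'' versus ``$\P(\tau^\star \le q_\beta)$'' and the exact placement of $q_\beta$ among atoms must be argued not to matter; this is where Theorem~\ref{thm:fluctuations} (anticoncentration of $t(n,k_n)$ on intervals of length $O(1)$) is essential, both to show the atoms are too small to disturb the quantile asymptotics and to bound the distributional factor in the influence estimate. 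The combinatorial bookkeeping (translating ``$e$ pivotal'' into ``$e$ on a geodesic of its rectangle and that rectangle's crossing time near $q_\beta$ and no other rectangle beating $q_\beta$'') is routine given the independence of rectangles, but care is needed because changing $\omega_e$ shifts $t_{i_0}(n,k_n)$ by at most $b-a$, so the pivotality window has width $2(b-a)$ rather than being a single atom.
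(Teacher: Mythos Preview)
Your treatment of the quantile asymptotics is correct and matches the paper: locate an explicit threshold $u_\beta$ at scale $x_n\sim\sqrt{2\log N_n}$ via Theorem~\ref{thm:mdfpp}, show $\P(\tau^\star<u_\beta)\to\beta$, then squeeze $q_\beta$ between $u_{\beta\mp\eps}$. This is exactly Lemmas~\ref{lma:nottrivial}--\ref{lma:quantiles}.

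The influence argument, however, has a genuine gap. First, the geodesic factor is \emph{not} $O(1/n)$: within a rectangle of dimensions $n\times k_n$, the geodesic crosses each vertical column, so for a typical edge $e$ one has $\P(e\in\pi(n,k_n))\asymp 1/k_n$, not $1/n$ (Kesten's bound gives $\sum_{e\in R}\P(e\in\pi)=O(n)$ over $O(nk_n)$ edges). Second, Theorem~\ref{thm:fluctuations} concerns $\tau(n,k_n)$, not $t(n,k_n)$, so you cannot invoke it here; and even granting an $o(n^{-1/22})$ anticoncentration bound for $t(n,k_n)$, combining it with a $O(1/k_n)$ geodesic factor gives only $\Inf_e=o(k_n^{-1}n^{-1/22})$, hence $\sum_e\Inf_e^2=O(n^2)\cdot o(k_n^{-2}n^{-1/11})$, which diverges for bounded $k_n$. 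Your accounting $O(n^2)\cdot O(n^{-2})\cdot o(1)$ does not follow from your stated ingredients.

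The paper's fix is simpler: drop the geodesic factor altogether and instead prove a \emph{strong} anticoncentration bound for $t(n,k_n)$ itself near $q_\beta$, namely
\[
\sup_{x\le u_{\beta+\eps}}\P\big(t(n,k_n)\in[x-c,x)\big)=o\big(n^{-23/22}\big)
\]
(this is Lemma~\ref{lma:interval}, obtained directly from Theorem~\ref{thm:mdfpp} by differencing the tail approximation and using $\Var(t(n,k_n))\ge cn/k_n$). Then $\Inf_e(f_n^\star)\le 2\,\P(t(n,k_n)\in[q_\beta-(b-a),q_\beta))=o(n^{-23/22})$ uniformly over edges, and $\sum_e\Inf_e^2\le Cn^2\cdot o(n^{-23/11})=o(n^{-1/11})$. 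The geodesic plays no role. Incidentally, the $1/22$ threshold is not consumed in this theorem (which the paper notes holds for $k_n=o(n^{1/11})$); it is imposed only so that Lemma~\ref{lma:interval} is strong enough to be reused in the proof of Theorem~\ref{thm:main}.
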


We remark that the above theorem remains true for $k_n=o(n^{1/11})$. However, in order to be able to use one of the lemmas below (Lemma~\ref{lma:interval}) also in the proof of Theorem~\ref{thm:main} in the next section, we impose the restriction $k_n=o(n^{1/22})$ for the conclusion of the lemma to be stronger.

Similarly as in Section~\ref{sectribes}, when analysing the generalised tribes function, we split the proof of Theorem~\ref{thm:preliminary} into several lemmas. These lemmas will also be important in the deduction of Theorem~\ref{thm:main} in the next section. 

Given a sequence $(k_n)_{n\ge1}$, set $\ell_n:=\lfloor n/k_n\rfloor$. For $\beta\in(0,1)$ let
$$
d(n,\beta):=\sqrt{2\log\ell_n-\log(2\log\ell_n)-2\log\big(\sqrt{2\pi}\log(1/\beta)\big)},
$$
and set
\begin{equation}\label{eq:t_beta}
u_\beta=u_\beta(n):=\E[t(n,k_n)]-d(n,1-\beta)\cdot\sqrt{\Var(t(n,k_n))}.
\end{equation}
The first couple of lemmas determine the asymptotic growth of the quantiles of $\tau^\star(n,k_n)$.

 \begin{lemma}\label{lma:nottrivial}
Suppose that $k_n=o(n^{1/22})$. For every $\beta\in(0,1)$, with $u_\beta$ as defined in~\eqref{eq:t_beta}, we have $$\lim_{n\to\infty}\P\big(\tau^\star(n,k_n)<u_\beta\big)=\beta.$$
 \end{lemma}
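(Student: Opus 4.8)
\textbf{Proof plan for Lemma~\ref{lma:nottrivial}.} The plan is to exploit the independence of the $\lfloor n/k_n\rfloor$ rectangle crossing times that define $\tau^\star(n,k_n)$, exactly as in the proof of Lemma~\ref{case1conv} for the polynomial Tribes function, but now feeding in the moderate deviations theorem for thin rectangles (Theorem~\ref{thm:mdfpp}) in place of the elementary binomial estimates~\eqref{eq:bin1}--\eqref{eq:bin2}. Writing $t_1,\ldots,t_{\ell_n}$ for the independent copies of $t(n,k_n)$ across the disjoint tiling rectangles, we have
$$
\P\big(\tau^\star(n,k_n)\ge u_\beta\big)=\Big(\P\big(t(n,k_n)\ge u_\beta\big)\Big)^{\ell_n}=\Big(1-\P\big(t(n,k_n)<u_\beta\big)\Big)^{\ell_n},
$$
so the task reduces to showing that $\ell_n\cdot\P\big(t(n,k_n)<u_\beta\big)\to\log(1/\beta)$ as $n\to\infty$, after which the standard $(1-c/\ell_n)^{\ell_n}\to e^{-c}$ limit gives $\P(\tau^\star(n,k_n)<u_\beta)\to 1-\beta$. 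Wait --- I should double-check the direction: the lemma asserts $\P(\tau^\star<u_\beta)\to\beta$, and $u_\beta$ is defined with $d(n,1-\beta)$, so I expect the bookkeeping to land on $\ell_n\P(t(n,k_n)<u_\beta)\to\log(1/(1-\beta))$, giving $\P(\tau^\star\ge u_\beta)\to(1-\beta)$ and hence $\P(\tau^\star<u_\beta)\to\beta$; the precise constant inside $d(n,\cdot)$ is exactly engineered to make this work, so I would track it carefully but not belabour it.

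The main computation is the asymptotics of $\P\big(t(n,k_n)<u_\beta\big)$. By definition~\eqref{eq:t_beta}, the event is $\{t(n,k_n)-\E[t(n,k_n)]<-d(n,1-\beta)\sqrt{\Var(t(n,k_n))}\}$, i.e.\ a lower-tail deviation at level $x=d(n,1-\beta)$. Since $d(n,1-\beta)\sim\sqrt{2\log\ell_n}$ and $\ell_n=\lfloor n/k_n\rfloor\ge n^{1-o(1)}$, we have $x\asymp\sqrt{\log n}$, which certainly satisfies $1\ll x\ll n^{(1-10\alpha)/18}$ for any admissible $\alpha$ (here $k_n=o(n^{1/22})$ comfortably allows taking $\alpha$ close to $1/22<1/10$). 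Theorem~\ref{thm:mdfpp} therefore applies and gives
$$
\P\big(t(n,k_n)<u_\beta\big)=\big[1-\Phi(d(n,1-\beta))\big]\Big[1+o\Big(\tfrac{x^3}{n^{(1-10\alpha)/6}}\Big)\Big]=(1+o(1))\big[1-\Phi(d(n,1-\beta))\big].
$$
Then I would use the Gaussian tail asymptotic $1-\Phi(y)=(1+o(1))\frac{1}{y\sqrt{2\pi}}e^{-y^2/2}$ together with the explicit form $d(n,1-\beta)^2=2\log\ell_n-\log(2\log\ell_n)-2\log(\sqrt{2\pi}\log(1/(1-\beta)))$; the first term produces the $1/\ell_n$ decay, the $-\log(2\log\ell_n)$ term exactly cancels the $\frac{1}{y\sqrt{2\pi}}$ prefactor (note $y=d(n,1-\beta)\sim\sqrt{2\log\ell_n}$), and the constant term produces the factor $\log(1/(1-\beta))$. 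This yields $\P(t(n,k_n)<u_\beta)=(1+o(1))\frac{\log(1/(1-\beta))}{\ell_n}$, hence $\ell_n\P(t(n,k_n)<u_\beta)\to\log(1/(1-\beta))$, and plugging back in completes the proof.

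\textbf{Main obstacle.} The genuinely non-routine ingredient is simply verifying that Theorem~\ref{thm:mdfpp} is applicable with the right range of $x$ and that the error term is negligible --- but this is largely a matter of checking that $x=d(n,1-\beta)\asymp\sqrt{\log n}$ sits safely inside the permitted window $x\ll n^{(1-10\alpha)/18}$, which it does by a wide polynomial margin. The one point requiring a little care is the interplay between $\E[t(n,k_n)]$, $\Var(t(n,k_n))$ and $u_\beta$: Theorem~\ref{thm:mdfpp} is stated in terms of normalisation by the \emph{true} mean and variance of $t(n,k_n)$, and $u_\beta$ in~\eqref{eq:t_beta} is defined using exactly those quantities, so no further approximation (such as replacing the variance by $\sigma_n^2 m_n$) is needed here --- this is presumably why the authors chose to phrase~\eqref{eq:t_beta} in terms of $\E[t(n,k_n)]$ and $\Var(t(n,k_n))$ directly. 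Beyond that, the remaining steps --- the Gaussian tail expansion and the $(1-c/\ell_n)^{\ell_n}$ limit --- are entirely standard, and the proof should be short.
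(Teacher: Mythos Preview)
Your proposal is correct and follows essentially the same approach as the paper: use independence to write $\P(\tau^\star\ge u_\beta)=(1-\P(t(n,k_n)<u_\beta))^{\ell_n}$, apply Theorem~\ref{thm:mdfpp} at level $x=d(n,1-\beta)\asymp\sqrt{\log n}$, use the Gaussian tail asymptotic to extract $\P(t(n,k_n)<u_\beta)=(1+o(1))\log(1/(1-\beta))/\ell_n$, and conclude via $(1-c/\ell_n)^{\ell_n}\to e^{-c}$. Your bookkeeping on the $\beta$ versus $1-\beta$ constant is correct, and your additional remarks on the applicability of Theorem~\ref{thm:mdfpp} and the normalisation in~\eqref{eq:t_beta} are accurate elaborations that the paper leaves implicit.
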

 
  \begin{proof}
Note first that due to independence we have
$$
\P\big(\tau^\star(n,k_n)\ge u_\beta\big)=\big[1-\P\big(t(n,k_n)<u_\beta\big)\big]^{\ell_n}.
$$
Since $d(n,1-\beta)$ grows logarithmically in $n$, Theorem~\ref{thm:mdfpp} applies and gives that
$$
\P\big(t(n,k_n)<u_\beta\big)=\big(1+o(1)\big)\big[1-\Phi\big(d(n,1-\beta)\big)\big].
$$
By the tail behaviour of the Gaussian distribution, in~\eqref{eq:normal2}, we obtain
$$
\P\big(t(n,k_n)<u_\beta\big)=\big(1+o(1)\big)\frac{1}{\sqrt{2\pi}\,d(n,1-\beta)}e^{-d(n,1-\beta)^2/2}=\big(1+o(1)\big)\frac{\log(1/(1-\beta))}{\ell_n}.
$$
We conclude that
$$
\P\big(\tau^\star(n,k_n)\ge u_\beta\big)=\bigg[1-\big(1+o(1)\big)\frac{\log(1/(1-\beta))}{\ell_n}\bigg]^{\ell_n}\to1-\beta
$$
as $n\to\infty$, as required.
 \end{proof}

Next we relate the quantiles of $\tau^\star(n,k_n)$ with $u_\beta$.

\begin{lemma}\label{lma:quantiles}
Suppose that $k_n=o(n^{1/22})$. Fix $\beta\in(0,1)$ and $\eps>0$ so that $0<\beta-\eps<\beta+\eps<1$. Then, for any $\beta$-quantile $q_\beta$ of $\tau^\star(n,k_n)$, for large $n$ we have
$$
u_{\beta-\eps}<q_\beta<u_{\beta+\eps}.
$$
\end{lemma}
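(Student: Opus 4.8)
The plan is to deduce this from Lemma~\ref{lma:nottrivial} exactly as Lemma~\ref{tribesquantile} was deduced from Lemma~\ref{case1conv} in the Tribes section. Fix $\beta\in(0,1)$ and $\eps>0$ with $0<\beta-\eps<\beta+\eps<1$. For the lower bound $q_\beta>u_{\beta-\eps}$: by Lemma~\ref{lma:nottrivial} applied with the quantile parameter $\beta-\eps$, we have $\P(\tau^\star(n,k_n)<u_{\beta-\eps})\to\beta-\eps$, so for all large $n$ this probability is at most $\beta-\eps/2<\beta$. Since any $\beta$-quantile $q_\beta$ must satisfy $\P(\tau^\star(n,k_n)<q_\beta)\le\beta$ from the left and $\P(\tau^\star(n,k_n)\le q_\beta)\ge\beta$, and in particular $\P(\tau^\star(n,k_n)<q_\beta)$ cannot drop below the target by more than a boundary atom, the estimate $\P(\tau^\star(n,k_n)<u_{\beta-\eps})<\beta$ forces $u_{\beta-\eps}<q_\beta$ (if instead $q_\beta\le u_{\beta-\eps}$ then $\P(\tau^\star(n,k_n)<q_\beta)\le\P(\tau^\star(n,k_n)<u_{\beta-\eps})<\beta$, which is consistent, so one must be slightly more careful — see below).

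To make the one-sided comparisons clean I would, as in the proof of Lemma~\ref{tribesquantile}, also invoke Lemma~\ref{lma:nottrivial} at slightly shifted parameters. For the lower bound: pick $\eps'\in(0,\eps)$; then $\P(\tau^\star(n,k_n)\le u_{\beta-\eps})\le\P(\tau^\star(n,k_n)<u_{\beta-\eps'})\to\beta-\eps'<\beta$, so for large $n$ we get $\P(\tau^\star(n,k_n)\le u_{\beta-\eps})<\beta$, which by definition of a $\beta$-quantile gives $q_\beta>u_{\beta-\eps}$. For the upper bound: $\P(\tau^\star(n,k_n)\ge u_{\beta+\eps})\le\P(\tau^\star(n,k_n)>u_{\beta+\eps/2})=1-\P(\tau^\star(n,k_n)\le u_{\beta+\eps/2})\le 1-\P(\tau^\star(n,k_n)<u_{\beta+\eps/2})\to 1-(\beta+\eps/2)$, hence for large $n$ it is at most $1-\beta-\eps/4<1-\beta$, i.e.\ $\P(\tau^\star(n,k_n)<u_{\beta+\eps})>\beta$, which forces $q_\beta<u_{\beta+\eps}$. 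The monotonicity of $u_\gamma$ in $\gamma$ (clear from~\eqref{eq:t_beta}, since $d(n,1-\gamma)$ is decreasing in $\gamma$ for fixed large $n$, so $u_\gamma$ is increasing in $\gamma$) is what lets these shifted values be sandwiched correctly; I would note this monotonicity explicitly, at least for $n$ large enough that $d(n,\cdot)$ is well-defined and the logarithms inside are positive.

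There is essentially no obstacle here: the content is entirely in Lemma~\ref{lma:nottrivial} (which in turn rests on Theorem~\ref{thm:mdfpp} and the moderate deviations machinery), and this lemma is a soft extraction of quantile bounds from convergence of distribution functions at a one-parameter family of points. The only mild care needed is the usual subtlety that $\beta$-quantiles need not be unique and that $\tau^\star(n,k_n)$ is a discrete random variable, so one should phrase the quantile inequalities in terms of both $\P(\cdot<q_\beta)$ and $\P(\cdot\le q_\beta)$ and use the strict inequalities $\beta-\eps<\beta<\beta+\eps$ to absorb any boundary atoms; choosing the auxiliary shifts $\eps/2$, $\eps'$ as above handles this uniformly in $n$.
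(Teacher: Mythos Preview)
Your proposal is correct and follows essentially the same route as the paper: both deduce the quantile sandwich directly from Lemma~\ref{lma:nottrivial} by evaluating the distribution at the shifted parameters $\beta\pm\eps$ (and an intermediate $\beta\pm\eps/2$ or $\beta-\eps'$) to absorb possible atoms, then invoking the definition of a $\beta$-quantile. You are in fact a bit more explicit than the paper in noting the monotonicity of $\gamma\mapsto u_\gamma$ needed for the comparison $\P(\tau^\star\le u_{\beta-\eps})\le\P(\tau^\star<u_{\beta-\eps'})$, which the paper uses tacitly.
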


\begin{proof}
    Fix $\beta\in(0,1)$ and $\eps>0$ so that $0<\beta-\eps<\beta+\eps<1$. By Lemma~\ref{lma:nottrivial} we have for all large $n$ that
    $$
    \P\big(\tau^\star(n,k_n)\le u_{\beta-\eps}\big)\le\P\big(\tau^\star(n,k_n)<u_{\beta-\eps/2}\big)\le\beta-\eps/4.
    $$
    Consequently, for large values of $n$ we have that $u_{\beta-\eps}$ is too small to be a $\beta$-quantile of $\tau^\star(n,k_n)$, and hence that $u_{\beta-\eps}<q_\beta$. Similarly, again by Lemma~\ref{lma:nottrivial}, we have for large $n$ that
    $$
    \P\big(\tau^\star(n,k_n)\ge u_{\beta+\eps}\big)\ge \beta+\eps/2,
    $$
    and hence that $q_\beta< u_{\beta+\eps}$.
\end{proof}

Our final lemma is a uniform bound on the probability that a the left-right crossing time of a rectangle is contained in a bounded interval.

\begin{lemma}\label{lma:interval}
    Suppose that $k_n=o(n^{1/22})$. For every $\beta\in(0,1)$ and $c>0$ we have
    $$
    \sup_{x\le u_{\beta}}\P\Big(t(n,k_n)\in[x-c,x)\Big)=o\Big(\frac{1}{n^{23/22}}\Big).
    $$
\end{lemma}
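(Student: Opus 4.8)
The plan is to reduce the interval probability $\P(t(n,k_n)\in[x-c,x))$ to a comparison of two tail probabilities for the crossing time, and then apply the moderate deviations theorem (Theorem~\ref{thm:mdfpp}) to each. Write the interval probability as a difference of lower-tail probabilities: with $y := \E[t(n,k_n)]$ and $v := \sqrt{\Var(t(n,k_n))}$, we have
$$
\P\big(t(n,k_n)\in[x-c,x)\big)=\P\big(t(n,k_n)<x\big)-\P\big(t(n,k_n)<x-c\big).
$$
For $x\le u_\beta$, the relevant normalised deviation $x^\ast:=(y-x)/v$ satisfies $x^\ast \ge d(n,1-\beta) \gg 1$, since $x\le u_\beta$ means $y-x\ge d(n,1-\beta)\cdot v$ by the definition~\eqref{eq:t_beta}. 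We also need $x^\ast\ll n^{(1-10\alpha)/18}$ so that Theorem~\ref{thm:mdfpp} applies; for $x^\ast$ beyond this window the probability $\P(t(n,k_n)<x)$ is already smaller than any polynomial decay we need (one can truncate at, say, $x^\ast = n^{(1-10\alpha)/18}/\log n$ and handle the far tail crudely by monotonicity, noting $1-\Phi(x^\ast)$ is super-polynomially small there while the $n^{23/22}$ we must beat is only polynomial — here one uses $k_n = o(n^{1/22})$ so that $\alpha$ can be taken smaller than $1/10$ and the exponent $(1-10\alpha)/18$ stays bounded away from values that would make the window too short).

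The core of the argument is the following: by Theorem~\ref{thm:mdfpp},
$$
\P\big(t(n,k_n)<x\big)=\big[1-\Phi(x^\ast)\big]\Big[1+o\big(x^\ast{}^3/n^{(1-10\alpha)/6}\big)\Big],
$$
and similarly with $x$ replaced by $x-c$, i.e.\ with $x^\ast$ replaced by $x^\ast+c/v$. Subtracting, the interval probability is (up to the $o(1)$ relative errors from the moderate deviations theorem, which are harmless since $1-\Phi(x^\ast)$ is itself small)
$$
\big[\Phi(x^\ast+c/v)-\Phi(x^\ast)\big]\cdot(1+o(1)) + (\text{error terms}).
$$
Now $v=\sqrt{\Var(t(n,k_n))}\asymp\sqrt{n/k_n}\ge\sqrt{n^{1-1/22}}=n^{21/44}$ by Lemma~\ref{lma:CD_variance} together with $k_n=o(n^{1/22})$, so $c/v = O(n^{-21/44})$ is tiny. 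By the mean value theorem, $\Phi(x^\ast+c/v)-\Phi(x^\ast)\le (c/v)\varphi(x^\ast)$, and using the Gaussian tail asymptotic~\eqref{eq:normal2} we have $\varphi(x^\ast)=(1+o(1))x^\ast\big(1-\Phi(x^\ast)\big)$. Since $x\le u_\beta$ forces $1-\Phi(x^\ast)\le 1-\Phi(d(n,1-\beta)) = \Theta(1/\ell_n) = O(k_n/n)$ by the computation in the proof of Lemma~\ref{lma:nottrivial}, we obtain
$$
\Phi(x^\ast+c/v)-\Phi(x^\ast) = O\!\Big(\frac{c\,x^\ast}{v}\cdot\frac{k_n}{n}\Big) = O\!\Big(\frac{k_n\,\log n}{v\, n}\Big),
$$
where I used $x^\ast = O(\sqrt{\log \ell_n}) = O(\sqrt{\log n})$ over the range where the bound $1-\Phi(x^\ast)\lesssim 1/\ell_n$ is tight (for larger $x^\ast$ the bound only improves). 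Plugging in $v\ge n^{21/44}$ and $k_n=o(n^{1/22})$ gives a bound of order $n^{1/22}\cdot n^{-21/44}\cdot n^{-1}\cdot\log n = o(n^{-1-21/44+1/22})=o(n^{-23/22})$, after checking the arithmetic of the exponents ($1+21/44-1/22 = 1 + 21/44 - 2/44 = 1+19/44 > 23/22 = 46/44$; one verifies $1+19/44 = 63/44 > 46/44$, giving room to spare). One must also confirm that the additive moderate-deviations error terms, of relative size $o(x^\ast{}^3/n^{(1-10\alpha)/6})$ applied to a quantity of size $O(k_n/n)$, are themselves $o(n^{-23/22})$, which follows from the same exponent bookkeeping once $\alpha<1/10$ is chosen appropriately small given $k_n=o(n^{1/22})$.

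The main obstacle I anticipate is the exponent arithmetic: one must carefully track how the variance lower bound $\Var(t(n,k_n))\ge c\,n/k_n$, the restriction $k_n=o(n^{1/22})$, and the width of the moderate-deviations window $x\ll n^{(1-10\alpha)/18}$ interact, to confirm that everything beats $n^{-23/22}$ with room to spare, and in particular to fix a valid choice of $\alpha<1/10$ compatible with $k_n=o(n^{1/22})$. The supremum over $x\le u_\beta$ is handled uniformly because the bound $1-\Phi(x^\ast)=O(k_n/n)$ holds for \emph{all} such $x$ (it only gets smaller as $x$ decreases), and because the Gaussian density factor $\varphi(x^\ast)$ times $c/v$ is monotone decreasing in $x^\ast$ in the regime $x^\ast\gg1$; so the worst case is $x$ close to $u_\beta$, which is precisely the boundary case we have estimated. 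A secondary technical point is justifying the far-tail truncation: for $x$ with $x^\ast$ exceeding the moderate-deviations window we cannot invoke Theorem~\ref{thm:mdfpp}, but there $\P(t(n,k_n)<x)\le\P(t(n,k_n)<\tilde x)$ for the truncation point $\tilde x$ is already $o(n^{-23/22})$ by the estimate at $\tilde x$, and monotonicity of the distribution function finishes the bound.
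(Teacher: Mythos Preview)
Your proposal is correct and follows essentially the same approach as the paper: write the interval probability as a difference of lower-tail probabilities, apply Theorem~\ref{thm:mdfpp} (the paper fixes $\alpha=1/22$), bound the main Gaussian increment via the density and the variance lower bound from Lemma~\ref{lma:CD_variance}, control the moderate-deviations error term separately, and observe that the supremum over $x\le u_\beta$ is attained at $x=u_\beta$ by monotonicity. The only cosmetic difference is that the paper truncates the far tail at $x^\ast=\sqrt{4\log n}$ (yielding $\P(t(n,k_n)<v)=O(n^{-2})$ directly) rather than near the edge of the moderate-deviations window, which keeps the exponent bookkeeping slightly cleaner.
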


\begin{proof}
Let $v=v(n):=\E[t(n,k_n)]-\sqrt{4\log n}\cdot\sqrt{\Var(t(n,k_n))}$. By Theorem~\ref{thm:mdfpp} and~\eqref{eq:normal2} we have that
$$
\P\big(t(n,k_n)<v\big)=\big(1+o(1)\big)\big[1-\Phi(\sqrt{4\log n})\big]=O\Big(\frac{1}{n^2}\Big).
$$
Hence, it remains to show that
$$
\sup_{x\in[v,u_{\beta}]}\P\big(t(n,k_n)\in[t-c,t)\Big)=o\Big(\frac{1}{n^{23/22}}\Big).
$$

We first rewrite
$$
\P\big(t(n,k_n)\in[x-c,x)\big)=\P\big(t(n,k_n)<x\big)-\P\big(t(n,k_n)<x-c\big).
$$
Next we introduce a scaling function $h$ as
\begin{equation}\label{eq:h}
h(x)=(x-\E[t(n,k_n)])/\sqrt{\Var(t(n,k_n))},
\end{equation}
and note that $h(x)$ is negative on $[v,u_\beta]$. Applying Theorem~\ref{thm:mdfpp} (twice) with $\alpha=1/22$ shows that the above expression equals
$$
\Big(1+o\Big(\frac{1}{n^{1/11}}\Big)\Big)\big[1-\Phi\big(-h(x)\big)\big]-\Big(1+o\Big(\frac{1}{n^{1/11}}\Big)\Big)\big[1-\Phi\big(-h(x-c)\big)\big]
$$
Rearranging the terms above gives
$$
\frac{1}{\sqrt{2\pi}}\int_{-h(x)}^{-h(x-c)}e^{-y^2/2}\,dy+o\Big(\frac{1}{n^{1/11}}\Big)\big[1-\Phi(-h(x))\big].
$$
The above expression is increasing in $x$, and maximal over the given interval for $x=u_\beta$. This gives the further upper bound
$$
\frac{c}{\sqrt{\Var(t(n,k_n))}}e^{-d(n,1-\beta)^2/2}+o\Big(\frac{1}{n^{1/11}}\Big)\big[1-\Phi\big(d(n,1-\beta)\big)\big].
$$
Since $\ell\sim n/k_n$ and $\Var(t(n,k_n))\ge n/k_n$, by Lemma~\ref{lma:CD_variance}, we obtain by definition of $d$ and~\eqref{eq:normal2} that
$$
\sup_{x\in[v,u_{\beta}]}\P\big(t(n,k_n)\in[x-c,x)\big)=O\Big(\frac{\sqrt{\log n}}{n^{30/22}}\Big)+o\Big(\frac{1}{n^{23/22}}\Big),
$$
as required.
\end{proof}

 We now finally have the tools to prove Theorem~\ref{thm:preliminary}.
 
\begin{proof}[Proof of Theorem \ref{thm:preliminary}]
Given $\eps>0$ we have, by Lemmas~\ref{lma:nottrivial} and~\ref{lma:quantiles}, that for large $n$
$$
\beta-2\eps\le\P\big(\tau^\star(n,k_n)<u_{\beta-\eps}\big)\le\P\big(\tau^\star(n,k_n)<q_\beta\big)\le\P\big(\tau^\star(n,k_n)\le q_\beta\big)\le\P\big(\tau^\star(n,k_n)<u_{\beta+\eps}\big)\le\beta+2\eps.
$$
Since $\eps>0$ was arbitrary, we conclude that
$$
\lim_{n\to\infty}\P\big(\tau^\star(n,k_n)< q_\beta\big)=\lim_{n\to\infty}\P\big(\tau^\star(n,k_n)\le q_\beta\big)=\beta.
$$

In order to prove that the function (or, more precisely, sequence of functions) $f_n^\star=\ind_{\{\tau^\star(n,k_n)> q_\beta\}}$ is noise sensitive, we aim to bound the influences of the individual edges, to show that the sum of influences squared tends to zero as $n\to\infty$. The conclusion will then follow from the BKS Theorem.

First note that since the $\ell_n$ rectangles are disjoint, each edge is contained in at most one rectangle. Moreover, changing the value of an edge may affect the crossing time of the rectangle it is contained in, but not the crossing times of the remaining rectangles. In particular, edges not contained in any rectangle have influence zero.
Since all rectangles are of equal dimensions, it will suffice to bound the influence of an edge contained in the first rectangle $[0,n]\times[0,k_n-1]$. So, fix an edge $e$ in this rectangle.

To estimate the influence of $e$, note that since being pivotal (meaning that changing the value of $\omega_e$ will change the outcome of the function) does not depend on the value of the edge itself, we have
$$
\Inf_e(f_n^\star)=2\,\P(e\text{ pivotal})\,\P(\omega_e=a)=2\,\P(e\text{ pivotal},\omega_e=a).
$$
Next, note that if there exists a left-right distance-minimising path of the rectangle {\em not} containing $e$, then increasing the weight at $e$ has no effect on $t_1(n,k_n)$. However, if every left-right distance-minimising path of the rectangle contains $e$, then increasing $\omega_e$ from $a$ to $b$ will change $t_0(n,k_n)$ by an amount at most $b-a$. Hence, on the event that $e$ is pivotal and $\omega_e=a$, we have that $t_0(n,k_n)\in[q_\beta-(b-a),q_\beta)$, while the remaining rectangles all have crossing time at least $q_\beta$.
It follows, in particular, that
$$
\Inf_e(f_n^\star)\le2\,\P\big(t(n,k_n)\in[q_\beta-(b-a),q_\beta)\big).
$$
Next, fix $\eps>0$ such that $\beta+\eps<1$. By Lemma~\ref{lma:quantiles}, we have $q_\beta<u_{\beta+\eps}$ for large $n$. Consequently, it follows from Lemma~\ref{lma:interval} that
\begin{equation}\label{eq:inf_indiv}
\Inf_e(f_n^\star)=o\Big(\frac{1}{n^{23/22}}\Big).
\end{equation}


The $\ell_n$ rectangles are all contained in the square $[0,n]\times[0,n-1]$. Since the square consists of $O(n^2)$ edges, there exists a constant $C>0$ such that

$$\sum_{e\in E}\Inf_e(f_n^\star)^2\le Cn^2\max_{e\in E}\Inf_e(f_n^\star)^2.$$
Together with~\eqref{eq:inf_indiv} we get that
$$
\sum_{e\in E}\Inf_e(f_n^\star)^2= n^2\cdot o\Big(\frac{1}{n^{23/11}}\Big)=o\Big(\frac{1}{n^{1/11}}\Big).
$$
The desired conclusion now follows from the BKS Theorem.
\end{proof}

\section{Proof of main results}\label{sec:main}

We won't be able to derive an as precise description of the asymptotics for $\beta$-quantiles of $\tau(n,k)$ as for those of $\tau^\star(n,k)$. Nevertheless, having done much of the ground work in the previous section, we will be able to finish up the proof of Theorem~\ref{thm:main} without much effort.

\begin{proof}[Proof of Theorem~\ref{thm:main}]
    By definition of a quantile we have for any $\beta$-quantile $q_\beta$ of $\tau(n,k_n)$ that
    $$
    \P\big(\tau(n,k_n)\le q_\beta\big)\ge\beta\quad\text{and}\quad\P\big(\tau(n,k_n)< q_\beta\big)\le\beta.
    $$
    Since by definition we have $\tau(n,k_n)\le \tau^\star(n,k_n)$, it follows that for every $\beta$-quantile $q_\beta$ of $\tau(n,k_n)$ there exists a $\beta$-quantile $q_\beta^\star$ of $\tau^\star(n,k_n)$ such that $q_\beta\le q_\beta^\star$.
    Fix $\eps>0$ so that $\beta+\eps<1$. Then, $q_\beta<u_{\beta+\eps}$ for large $n$ by Lemma~\ref{lma:quantiles}. It thus follows, for every $c>0$, that
    $$
    \beta\le\P\big(\tau(n,k_n)\le q_\beta\big)\le\P\big(\tau(n,k_n)<q_\beta\big)+\sup_{x\le u_{\beta+\eps}}\P\big(\tau(n,k_n)\in[x-c,x)\big).
    $$
    The definition of a quantile, the union bound, and Lemma~\ref{lma:interval} give the upper bound
    $$
    \beta+n\cdot\sup_{x\le u_{\beta+\eps}}\P\big(t(n,k_n)\in[x-c,x)\big)=\beta+o\Big(\frac{1}{n^{1/22}}\Big).
    $$
    This proves the first part of the theorem.

    In order to prove the second part of the second part we aim once again to bound the individual influences. Let $e$ be an edge, and recall that
    $$
    \Inf_e(f_n)=2\,\P(e\text{ pivotal})\,\P(\omega_e=a)=2\,\P(e\text{ pivotal},\omega_e=a).
    $$
    Each edge in the square $[0,n]\times[0,n-1]$ is contained in at most $k_n$ translates of the rectangle $[0,n]\times[0,k_n-1]$. Changing the value of the edge may affect the crossing time of the rectangles it is contained in, but not the crossing times of the remaining rectangles. More precisely, increasing the weight at $e$ from $a$ to $b$ will affect $t_i(n,k_n)$ if and only if every left-right distance minimising path of the rectangle $(0,i)+[0,n]\times[0,k_n-1]$ contains $e$. In this case, the change can result in an increase of at most $b-a$. It follows by the union bound that
    $$
    \Inf_e(f_n)\le2k_n\,\P\big(t(n,k_n)\in[q_\beta-(b-a),q_\beta)\big),
    $$
    which by Lemma~\ref{lma:interval} gives
    $$
    \max_{e\in E}\Inf_e(f_n)=o(1/n).
    $$
    Consequently,
    $$
    \sum_{e\in E}\Inf_e(f_n)^2\le Cn^2\max_{e\in E}\Inf_e(f_n)^2=o(1).
    $$
    Thus, the conclusion of the theorem follows from the BKS Theorem.
\end{proof}

Although not necessary, let us also provide a rough estimate on the quantiles of $\tau(n,k_n)$. For $\beta\in(0,1)$ let $u_\beta$ be defined as in~\eqref{eq:t_beta} and set
$$
\bar u_\beta:=\E[t(n,k_n)]-\sqrt{\Var(t(n,k_n))}\sqrt{2\log n-\log(2\log n)-2\log\big(\sqrt{2\pi}\beta\big)}.
$$
We claim that for fixed $\beta\in(0,1)$ and $\eps>0$ so that $0<\beta-\eps<\beta+\eps<1$, any $\beta$-quantile $q_\beta$ of $\tau(n,k_n)$ satisfies for large $n$ that
\begin{equation}\label{eq:quantile}
\bar u_{\beta-\eps}<q_\beta<u_{\beta+\eps}.
\end{equation}

Recall that, by construction, we have $\tau(n,k_n)\le \tau^\star(n,k_n)$. So that for every $\beta$-quantile $q_\beta$ there exists a $\beta$-quantile $q^\star_\beta$ of $\tau^\star(n,k_n)$ such that $q_\beta\le q_\beta^\star$. The upper bound in~\eqref{eq:quantile} is thus immediate from Lemma~\ref{lma:quantiles}.


    For the lower bound, let $N_\beta$ denote the number of rectangles with crossing time less than $\bar u_\beta$, i.e.\ let $N_\beta:=\#\{i=1,2,\ldots,n-k_n:t_{i-1}(n,k_n)<\bar u_\beta\}$. Then, using Markov's inequality,
    $$
    \P\big(\tau(n,k_n)<\bar u_\beta\big)=\P(N_\beta\ge1)\le n\,\P\big(t(n,k_n)<\bar u_\beta\big).
    $$
    Theorem~\ref{thm:mdfpp} and~\eqref{eq:normal2} gives that
    $$
    \P\big(t(n,k_n)<\bar u_\beta\big)=\big(1+o(1)\big)\frac{\beta}{n}.
    $$
    Hence we obtain for large $n$ that
    $$
    \P\big(\tau(n,k_n)\le \bar u_{\beta-\eps}\big)\le\P\big(\tau(n,k_n)<\bar u_{\beta-\eps/2}\big)\le\beta-\eps/4.
    $$
    This shows that $\bar u_{\beta-\eps}$ is too small to be a $\beta$-quantile for $\tau(n,k_n)$ when $n$ is large, and hence proves the lower bound in~\eqref{eq:quantile}.

\begin{proof}[Proof of Theorem~\ref{thm:fluctuations}]
We begin with the observation that
$$
\P\big(\tau(n,k)\in[x-c,x]\big)=\P\bigg(\bigcup_{i=0}^{n-k-1}\big\{t_i(n,k)\in[x-c,x]\big\}\cap\big\{t_j(n,k)\ge x-c,\forall j\neq i\big\}\bigg).
$$
For each $i$ we may find $\lfloor n/k\rfloor-1$ indices $j$ for which the rectangles corresponding to the variables $t_j(n,k)$ are disjoint, and disjoint of the rectangle corresponding to $t_i(n,k)$. The corresponding crossing times are thus independent, and exercising the union bound, we obtain that
\begin{equation}\label{eq:superb}
\P\big(\tau(n,k)\in[x-c,x]\big)\le n\,\P\big(t(n,k)\in[x-c,x]\big)\,\P\big(t(n,k)\ge x-c\big)^{\lfloor n/k\rfloor-1}.
\end{equation}
We shall bound both probabilities in the above right-hand side using Theorem~\ref{thm:mdfpp}.

Fix $\alpha<1/22$ and set $\beta=1-1/22$ so that $\beta<1-\alpha$. Let
$$
y:=\E[t(n,k_n)]-\sqrt{\Var(t(n,k_n))}\cdot\sqrt{2\beta\log n}.
$$
Then, Theorem~\ref{thm:mdfpp} and~\eqref{eq:normal2} give
\begin{equation}\label{eq:beta_bound}
\P\big(t(n,k_n)\ge y-c\big)= 1-(1+o(1))\frac{1}{\sqrt{4\pi\beta\log n}\cdot n^{\beta}},
\end{equation}
and hence that
\begin{equation}\label{eq:superpoly}
\P\big(t(n,k_n)\ge y-c\big)^{\lfloor n/k_n\rfloor-1}\le \exp\Big(-n^{1-\alpha-\beta}/\sqrt{4\pi\beta\log n}\Big),
\end{equation}
which decays faster than any polynomial since $\beta<1-\alpha$.

The reminder of the proof will closely follow that of Lemma~\ref{lma:interval}. Let again $h(x)$ be defined as in~\eqref{eq:h}. Then, by an analogous calculation as that leading to~\eqref{eq:beta_bound}, we obtain that
\begin{equation}\label{eq:supereroi}
\P\big(t(n,k_n)<h^{-1}(\sqrt{4\log n})\big)=O\Big(\frac{1}{n^2}\Big).
\end{equation}
A calculation analogous to those in Lemma~\ref{lma:interval} gives that for $h^{-1}(\sqrt{4\log n})\le x\le y$ we have
$$
\P\big(t(n,k_n)\in[x-c,x]\big)=\frac{1}{\sqrt{2\pi}}\int_{-h(x)}^{-h(x-c)}e^{-z^2/2}\,dz+o\Big(\frac{1}{n^{1/11}}\Big)\big[1-\Phi(-h(x))\big],
$$
which is maximal for $x=y$. Together with~\eqref{eq:supereroi} we thus get, for some constant $C<\infty$, that
\begin{equation}\label{eq:supernormal}
\sup_{x\le y}\P\big(t(n,k_n)\in[x-c,x]\big)\le \frac{C}{n^{1/11+\beta}\sqrt{\log n}}=\frac{C}{n^{1+1/22}\sqrt{\log n}}.
\end{equation}
Finally, combining~\eqref{eq:superb},~\eqref{eq:superpoly} and~\eqref{eq:supernormal}, considering the cases $x\le y$ and $x>y$ separately, we obtain that
$$
\sup_{x\ge0}\P\big(\tau(n,k)\in[x-c,x]\big)\le\frac{C}{n^{1/22}\sqrt{\log n}},
$$
as required.
\end{proof}

\section{Further directions}\label{sec:conjectures}

We will devote this last section to indicate some future directions of research and open problems related to Benjamini's problem and the work of this paper.

We started out with the problem of whether `being above the median' is a noise sensitive property for the point-to-point passage time $T_n=T(0,n{\bf e}_1)$. Due to the limited understanding of fluctuations in first-passage percolation, we have had to resort to restricting the problem in order to make progress. This led us, in the introduction, to call for
$$
\sup_{x\ge0}\P\big(T_n\in[x,x+c]\big)\to0\quad\text{as }n\to\infty,
$$
for every $c>0$.

More precise results regarding the nature of fluctuations have been established in related models of spatial growth, such as increasing subsequences in the plane, last-passage percolation with exponential or geometric weights, Brownian last-passage percolation, as well as for the largest eigenvalue of random matrices. It appears as if these results are in themselves insufficient to answer Benjamini's question. In addition, these settings do not fit into the framework of Boolean functions. Hence, solving Benjamini's problem in these settings remains an interesting open problem.

Another relevant question regards the relation between noise sensitivity of being above a certain quantile of some real-valued sequence of functions $f_n:\{0,1\}^n\to\R$, and the asymptotic independence of $f_n(\omega)$ and $f_n(\omega^\eps)$. In particular, given that
\begin{equation}\label{eq:nsm}
\P\big(f_n(\omega)>q,f_n(\omega^\eps)>q\big)-\P\big(f_n(\omega^\eps)>q\big)^2\to0\quad\text{as }n\to\infty
\end{equation}
for every quantile $q$ of $f_n$, is it then also true that
\begin{equation}\label{eq:nsc}
\textup{Corr}\big(f_n(\omega),f_n(\omega^\eps)\big)\to0\quad\text{as }n\to\infty?
\end{equation}
For many sequences it is natural to expect that the mean of $f_n$ corresponds to one of its quantiles, and thus that if~\eqref{eq:nsm} holds, then the signs of $f_n(\omega)-\E[f_n]$ and $f_n(\omega^\eps)-\E[f_n]$ are asymptotically independent, and hence that~\eqref{eq:nsc} should hold. We do not know whether this is true in general.

Finally, the reader may wonder why we consider the restricted square crossing time $\tau(n,k)$ now that already the rectangle crossing time $t(n,k)$ is known to obey a Gaussian central limit theorem. Well, for fixed $k$ we expect that $t(n,k)$ being above its median is a {\em noise stable} property, and hence not noise sensitive. Indeed, the case $k=1$ coincides with the classical Majority function on $n$ bits, which is well-known to be noise stable; see e.g.~\cite{garban_steif_book}. For diverging sequences $(k_n)_{n\ge1}$ we conjecture that `being above the median' is a noise sensitive property for $t(n,k_n)$. We motivate this by an heuristic calculation similar to~\eqref{eq:inf_firstbound}, which suggests that for a given edge $e$
$$
\Inf_e(\ind_{\{t(n,k)>m\}})\,\asymp\, \P\big(e\in\pi_n\big)\,\P\big(|t(n,k)-m|\le b-a\big)\,\asymp\, \frac{1}{k}\frac{1}{\sqrt{\Var(t(n,k))}},
$$
and hence that (there are about $nk$ influential edges)
$$
\sum_{e}\Inf_e(\ind_{\{t(n,k)>m\}})^2\asymp \frac{n}{k}\frac{1}{\Var(t(n,k))}.
$$
It is believed that $\Var(t(n,k))\asymp n/\sqrt{k}$ whenever $k=o(n^{2/3})$, and this has been proved to be the case in a related model by Dey, Joseph and Peled~\cite{DJP}. However, in first-passage percolation, the best bounds only give $\Var(t(n,k))\ge n/k$, which would give a constant upper bound on the sum of influences squared; see~\cite{Chatterjee2009CentralLT}. Hence, one would need to improve upon the variance bound in order to establish noise sensitivity of the rectangle crossing variables.

\appendix

\section{On the empirical distribution of cylinder crossing times}

In this section we revisit an argument due to van den Berg and Kesten~\cite{vdBK93}, regarding the empirical distribution of edges along a geodesic, which is required to complete the proof of Lemma~\ref{lma:CD_variance} in the main text. Their argument has been revisited in the works of Marchand~\cite{M02}, Gorski~\cite{gorski24} and Cid~\cite{cid23}. We shall here show how the argument extends from point-to-point travel times in the plane to cylinder crossing times. The presentation will follow that of~\cite{cid23} closely, which in turn was inspired by~\cite{gorski24}. Although the argument extends to dimensions $d\ge2$, here we give the argument in the planar setting.

In this section we consider the $\Z^2$ nearest neighbour lattice equipped with i.i.d.\ weights $(\omega_e)$ with common distribution $F$ supported on $[0,\infty)$. Let $r:=\inf\{x\ge0:F(x)>0\}$ denote the infimum of the support of $F$. We shall here consider any weight distribution $F$ satisfying
\begin{equation}\label{eq:useful}
F(0)<p_c\quad\text{and}\quad F(r)<\vec{p}_c,
\end{equation}
where $p_c=1/2$ denotes the critical probability for percolation and $\vec{p}_c>1/2$ the critical probability for oriented percolation on $\Z^2$. As in~\eqref{eq:t_rect}, we let $t(n,k)$ denote the left-right crossing time of the rectangle $[0,n]\times[0,k-1]$, with respect to the weight configuration $\omega=(\omega_e)$, and denote by $\pi(n,k)$ the set of edges belonging to some geodesic for $t(n,k)$.

\begin{proposition}\label{prop:empirical}
    For any weight distribution $F$ satisfying~\eqref{eq:useful} and any interval $I=[\alpha,\beta]$ such that $\P(\omega_e\in I)=F(\beta)-F(\alpha-)>0$ there exists a constant $c=c(I)>0$ such that for all $n\ge k\ge1$
    $$
    \E\big[\#\{e\in\pi(n,k):\omega_e\in I\}\big]\ge cn.
    $$
\end{proposition}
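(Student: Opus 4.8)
The plan is to transport the van den Berg--Kesten renormalisation argument, in the form used by Gorski~\cite{gorski24} and Cid~\cite{cid23}, from point-to-point distances to horizontal crossings of the cylinder $[0,n]\times[0,k-1]$. The only combinatorial input about $\pi(n,k)$ that is needed is that any geodesic realising $t(n,k)$ joins the two vertical sides and hence has at least $n$ edges, together with the matching upper bound of $Cn$ edges for some $C=C(F)$, which holds under~\eqref{eq:useful} exactly as in the linear bound of Kesten~\cite{kesten_1980} for point-to-point geodesics. It then suffices to exhibit a single geodesic that uses at least $cn$ edges of weight in $I$.

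The heart of the argument is a local estimate. Fix once and for all a large integer $L=L(F,I)$ and consider a box $B$ of side $L$ (truncated to height $k$ if $k<L$). One shows there is $\delta=\delta(L,F,I)>0$ and a ``good'' set $\mathcal G$ of weight configurations on the edges of $B$, with $\P\big((\omega_e)_{e\in B}\in\mathcal G\big)\ge\delta$, such that on $\mathcal G$ any geodesic of the whole cylinder that makes an excursion through $B$ between two opposite faces of $B$ is forced to cross a short ``corridor'' of $I$-edges planted inside $B$, and hence to use an edge of $B$ with weight in $I$. The construction of $\mathcal G$ plants this corridor and, around it, a finite pattern that destroys every \emph{cheap} way of circumventing the corridor inside the fixed box $B$; verifying that this has probability bounded below is where~\eqref{eq:useful} enters, with $F(0)<p_c$ ruling out a weight-$0$ percolating bypass and $F(r)<\vec p_c$ ruling out the oriented, minimal-weight bypasses that the geometry of the corridor would otherwise permit. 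This is precisely the local input of~\cite{gorski24,cid23}; only the ambient geometry (a cylinder rather than the full plane) changes, and that change is cosmetic.

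Given the local estimate, tile $[0,n]\times[0,k-1]$ by disjoint boxes of side $L$. Since a geodesic has at least $n$ edges and each box contains at most $2L^2$ edges, a geodesic meets at least $n/(2L^2)$ of the boxes. For each box $B$ one reveals the geodesic by an exploration started from the boundary of the cylinder, following~\cite{cid23}, which discovers whether and how the geodesic enters $B$ while leaving the weights inside $B$ unexamined; conditionally on this, the local estimate gives probability at least $\delta$ that $B$ is good, and on that event the geodesic pays an $I$-edge inside $B$. As distinct boxes supply distinct $I$-edges, summing over the boxes met by the geodesic yields $\E\big[\#\{e\in\pi(n,k):\omega_e\in I\}\big]\ge \delta\cdot n/(2L^2)=:cn$, which is the claim.

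I expect the main obstacle to be the conditioning in the last step: the box that the geodesic traverses, and the faces through which it enters and leaves, are themselves measurable functions of the weights, so one cannot condition naively on the complement of a fixed box. The fix is the standard explore-the-geodesic device of~\cite{gorski24,cid23}, but making it rigorous requires a union bound over the $O(n)$ possible positions of the geodesic's excursion near a given box, together with the insistence, built into the statement of the local estimate, that the good configuration forces an $I$-edge \emph{for every} assignment of weights outside $B$. A secondary point, specific to the cylinder, is that every constant must be uniform in $k$; this is arranged by renormalising with boxes of \emph{bounded} side $L$, so that each probability appearing in the local estimate is $k$-free, the only $k$-dependent ingredient being the combinatorial fact that a path of length $\ge n$ meets at least $n/(2L^2)$ of the tiles no matter the height.
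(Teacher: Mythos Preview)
Your overall plan---coarse-grain into boxes, prove a local estimate, sum over boxes traversed by the geodesic---matches the paper's structure, but the central technical step has a genuine gap. The difficulty you flag in your last paragraph is real and your proposed fix does not work: whether the geodesic enters a given box $B$, and through which faces, depends on the weights \emph{inside} $B$ as well as outside, so there is no exploration that ``discovers whether and how the geodesic enters $B$ while leaving the weights inside $B$ unexamined''. The geodesic is a global optimum; you cannot reveal it piecewise while keeping a region fresh. The paper (following~\cite{vdBK93,gorski24,cid23}) avoids this entirely by a \emph{resampling} argument rather than conditioning: take independent configurations $\omega,\omega'$, set $\omega^\ast$ equal to $\omega$ outside an annulus $B(z,mL)\setminus B(z,L)$ and equal to $\omega'$ on it. The events $\{\omega\text{-geodesic visits }B(z,L)\}\cap A_{L,m}(z)$ and $\{\omega'\in G_{L,m}(z)\}$ are then genuinely independent, and on their intersection one checks directly that the $\omega^\ast$-geodesic must cross $\partial B(z,mL)$ and hence pick up an $I$-edge; since $\omega^\ast\overset{d}{=}\omega$, this gives the bound with no conditioning.

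There is also a structural mismatch in how the local event is built and how~\eqref{eq:useful} enters. The paper does not plant a corridor inside a single box. It uses two scales $L\ll mL$: condition~\eqref{eq:useful} is invoked (via~\cite[Lemma~5.5]{vdBK93}) to show that the annulus-crossing event $A_{L,m}(z)$---any path from $B(z,L)$ to $\partial B(z,mL)$ costs at least $(m-1)L(r+\delta)$---holds with high probability for large $L$. The good event $G_{L,m}(z)$ then forces all edges on $\partial B(z,mL)$ to have weight in $I$ and all other annulus edges to be cheap ($<r+\delta/2$). The point is that, on $A_{L,m}(z)$, the original geodesic's traversal of the annulus was expensive, while after resampling a short detour through the cheap annulus is strictly faster (this is the inequality~\eqref{eq:m_cond}), so the $\omega^\ast$-geodesic is still forced into $B(z,mL)$ and must use a boundary $I$-edge. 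Your account of~\eqref{eq:useful} as ruling out bypasses of a planted corridor is not how the argument actually runs.
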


As mentioned, the analogous result for $T(0,n \boldsymbol{e}_1)$ is a well-known result due to van den Berg and Kesten~\cite[Equation (2.16)]{vdBK93}. The first condition in~\eqref{eq:useful} is necessary, since the optimal path otherwise is known to contain $O(\log n)$ edges with positive weight. The second condition is believed to be superfluous, which in~\cite{M02} was shown to be the case for $T(0,n{\bf e}_1)$.

The approach involves two main steps: The first is a coarse-graining step, and the second involves a resampling argument. The first step is identical to the full-plane setting, so we only sketch the proof of Lemma~\ref{lma:grains}. The second step requires some minor modifications, so the proof of Lemma~\ref{lma:resampling} and the conclusion of the proof are given in detail.

Let $\|\cdot\|$ denote $\ell^1$-distance on $\R^2$. For the coarse-graining step, given an integer $L\ge1$, we tile $\Z^2$ by (closed) $\ell_1$-balls $B(z,L):=\{x:\|x-z\|\le L\}$, centred at the points of the form $(i2L+jL,jL)$ for $i,j\in\Z$. We denote by $V_L$ this set of vertices.
For $m\ge2$ and $z\in V_L$, we let $A_{L,m}(z)$ denote the event that any path connecting $B(z,L)$ to $\partial B(z,mL)$ picks up weight at least $(m-1)L(r+\delta)$. Under the condition~\eqref{eq:useful} it is well-known (see e.g.~\cite[Lemma~5.5]{vdBK93}) that there exists $\delta>0$ such that
\begin{equation}\label{eq:grains}
\P\big(A_{L,m}(z)\big)\to1\quad\text{as }L\to\infty.
\end{equation}
For the remainder of this section, we fix $\delta>0$ accordingly.

\begin{lemma}\label{lma:grains}
For every $m\ge2$ and all $L\ge1$ sufficiently large, there exist constants $\theta,\gamma>0$ and $C<\infty$ such that for all $x,y\in\Z^2$ we have
$$
\P\big(\exists\text{ path $\pi:x\leftrightarrow y$ visiting at most $\theta\|y-x\|$ tiles for which $A_{L,m}(z)$ occurs}\big)<C\exp\big(-\gamma\|y-x\|\big).
$$
\end{lemma}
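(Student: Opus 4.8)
The plan is to follow the standard coarse-graining argument of van den Berg and Kesten, adapted to the cylinder geometry, in essentially the same way as in the full-plane setting. The statement to be proved is a large-deviation bound: with exponentially small probability, there is \emph{no} path from $x$ to $y$ that passes through more than $\theta\|y-x\|$ tiles $z\in V_L$ on which the ``good event'' $A_{L,m}(z)$ holds. Equivalently, I would prove that with exponentially small probability every path from $x$ to $y$ has many tiles on which $A_{L,m}(z)$ \emph{fails}, and then take complements. The key mechanism is that the bad tiles form a dependent percolation process that is stochastically dominated by a Bernoulli percolation whose parameter can be made arbitrarily small by taking $L$ large, thanks to~\eqref{eq:grains}.

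Concretely, I would proceed as follows. First, fix $m\ge2$ and choose $L$ large enough that $\P(A_{L,m}(z)^c)$ is as small as desired. Declare a tile $z\in V_L$ \emph{bad} if $A_{L,m}(z)$ fails. Since $A_{L,m}(z)$ depends only on the weights of edges inside $B(z,mL)$, and each such ball meets only a bounded number of the others (a number depending only on $m$, not on $L$), the family $\{\Ind_{A_{L,m}(z)^c}\}_{z\in V_L}$ is a finite-range dependent field; by the Liggett--Schonmann--Stacey domination theorem it is dominated by an i.i.d.\ Bernoulli field of parameter $p=p(L)\to0$ as $L\to\infty$. Second, I would observe that any nearest-neighbour path $\pi$ from $x$ to $y$ in $\Z^2$ must traverse a sequence of consecutive tiles forming a $\ast$-connected (or suitably connected) lattice path in $V_L$ from the tile of $x$ to the tile of $y$, of combinatorial length at least a constant multiple of $\|y-x\|/L$. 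If $\pi$ visits at most $\theta\|y-x\|$ \emph{good} tiles, then a positive fraction (depending on $\theta$, $L$) of the tiles along this path-of-tiles are bad. Third, a standard Peierls-type counting argument bounds the number of such tile-paths of length $N$ by $C^N$, and the probability that a fixed such path has a fixed fraction of bad tiles by $\binom{N}{\rho N}p^{\rho N}$; choosing $p$ (hence $L$) small makes $C\binom{N}{\rho N}p^{\rho N}\le e^{-\gamma' N}$ for some $\gamma'>0$, and summing over $N\gtrsim\|y-x\|/L$ gives the claimed bound $C\exp(-\gamma\|y-x\|)$, with $\theta$ chosen small enough at the outset so that the required fraction $\rho$ of bad tiles is bounded below.

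The one place where the cylinder geometry enters, and hence where care is needed beyond quoting the full-plane argument, is in the combinatorial lemma that every path from $x$ to $y$ induces a tile-path in $V_L$ of length comparable to $\|y-x\|/L$, together with the deterministic estimate that a path crossing $m$ concentric annuli around a good tile $z$ picks up weight at least $(m-1)L(r+\delta)$. Both facts are purely geometric and local, and the tiling by $\ell^1$-balls was chosen precisely so that they hold regardless of whether we work in the full plane or in a cylinder $[0,n]\times[0,k-1]$; indeed a path inside a cylinder is still a path in $\Z^2$, so the full-plane statement applies verbatim to $x,y\in\Z^2$ as written. For this reason I expect the main obstacle to be merely bookkeeping: making sure the constants $\theta,\gamma,C$ are uniform in $x,y$ and independent of the eventual application to $\pi(n,k)$, and verifying that the chosen $L$ (large, fixed) is compatible with the regime $n\ge k\ge1$ in Proposition~\ref{prop:empirical}. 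Since this first step ``is identical to the full-plane setting'' as noted in the text, the write-up can legitimately be kept to a sketch, citing~\cite{vdBK93,gorski24,cid23} for the details of the domination and Peierls arguments, and only spelling out the translation of the geometric input to the cylinder.
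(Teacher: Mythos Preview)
Your proposal is correct and follows the same Peierls-type strategy as the paper's sketch, which likewise cites \cite{vdBK93} for the details. The one substantive difference is how you handle the dependence among the events $A_{L,m}(z)$: you invoke Liggett--Schonmann--Stacey domination to compare the bad-tile field to an i.i.d.\ Bernoulli field, whereas the paper instead extracts from each tile-path a sparse subsequence of tiles (of density $1/m^2$) for which the balls $B(z,mL)$ are pairwise disjoint, so that the corresponding events are genuinely independent and a binomial large-deviation bound applies directly. Both routes are standard and lead to the same exponential bound; the paper's is slightly more elementary in that it avoids the LSS theorem, while yours is a bit more systematic. Your final paragraph on the cylinder geometry is unnecessary here: the lemma is stated for arbitrary $x,y\in\Z^2$ and the cylinder plays no role until Lemma~\ref{lma:resampling}, as you yourself note.
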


\begin{proof}[Sketch of proof]
The statement follows from a standard Peierls argument; see~\cite[Lemma~5.2]{vdBK93} for details. Each path $\pi$ between $x$ and $y$ visits at least $\frac{1}{4L}\|y-x\|$ distinct tiles $B(z,L)$ and, considering a subsequence, we can find $\frac{1}{4mL}\|y-x\|$ tiles $B(z,L)$ for which the larger regions $B(z,mL)$ are disjoint, and so that the events $A_{L,m}(z)$ are independent. For a proportion $\P(A_{L,m}(z))$ of these the event $A_{L,m}(z)$ occurs, and this proportion is by~\eqref{eq:grains} close to 1 if $L$ is large. A large deviation estimate of the binomial distribution gives that it is exponentially unlikely (in $\|y-x\|$) that $\pi$ picks up fewer than $\frac{1}{8mL}\|y-x\|$ tiles $B(z,L)$ for which the event $A_{L,m}(z)$ occurs.
\end{proof}

The next lemma is proved via a resampling argument. It will require that $m\ge2$ is fixed so that
\begin{equation}\label{eq:m_cond}
    \beta+(m+3)\Big(r+\frac\delta2\Big)<(m-1)(r+\delta).
\end{equation}
For ease of notation, let $D_{L,m,k}(z)$ denote the event that both endpoints of $\pi(n,k)$ lie outside $B(z,mL)$ and that $\pi(n,k)$ visits some vertex in $B(z,L)$.

\begin{lemma}\label{lma:resampling}
    For $m$ satisfying~\eqref{eq:m_cond} and all $L\ge1$ there exists $c=c(L,m)$ such that for all $z\in V_L$ and $k\ge 2L$ we have
    \begin{align*}
    &\P\big(\exists e\in\pi(n,k)\cap B(z,mL)\text{ such that }\omega_e\in I\big)\ge c\,\P\big(D_{L,m,k}(z)\cap A_{L,m}(z)\big).
    \end{align*}
\end{lemma}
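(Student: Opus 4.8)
The plan is to establish the inequality through a resampling (local modification) argument in the spirit of van den Berg and Kesten. Fix $z \in V_L$ and condition on the configuration off the ball $B(z,mL)$. On the event $D_{L,m,k}(z) \cap A_{L,m}(z)$, the geodesic $\pi(n,k)$ enters $B(z,L)$ but has both endpoints outside $B(z,mL)$, so it must cross the annulus $B(z,mL) \setminus B(z,L)$ at least twice, and by the definition of $A_{L,m}(z)$ each such crossing of the configuration \emph{as it stands} costs weight at least $(m-1)L(r+\delta)$ — so the portion of $\pi(n,k)$ inside $B(z,mL)$ is expensive. The idea is to show that this expensive excursion can be \emph{cheaply rerouted} once we resample the weights inside $B(z,mL)$ in a favourable way, and that the rerouting creates at least one edge with weight in $I$.

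First I would make precise the resampling event: let $\mathcal{G}$ denote the event, measurable with respect to the weights inside $B(z,mL)$, that there is a low-weight ``highway'' connecting the two faces where $\pi(n,k)$ exits (or more robustly, connecting $\partial B(z,L)$ to itself around, or a deterministic low-cost path through the ball), where every edge on it has weight at most $r + \delta/2$, together with at least one designated edge having weight in $I = [\alpha,\beta]$. By the condition $F(0) < p_c$, $F(r) < \vec p_c$ and $\P(\omega_e \in I) > 0$, such a configuration has probability bounded below by some $c_1 = c_1(L,m) > 0$, uniformly in the boundary conditions (this is where $k \ge 2L$ enters: the ball $B(z,mL)$ fits inside the slab $[0,n]\times[0,k-1]$ so that the rerouting stays within the rectangle). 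Since the weights inside $B(z,mL)$ are independent of those outside, conditionally on the outside configuration we have $\P(\mathcal{G} \mid \mathcal{F}_{\text{out}}) \ge c_1$.

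Next I would argue that on $D_{L,m,k}(z) \cap A_{L,m}(z) \cap \mathcal{G}$, the true geodesic $\pi(n,k)$ for the (modified) full configuration must use an edge with weight in $I$ inside $B(z,mL)$. The point is a cost comparison: the old route through $B(z,mL)$ costs at least $2(m-1)L(r+\delta)$ (actually at least one crossing, but one suffices), whereas the rerouted path through the highway costs at most roughly $(m+3)L(r+\delta/2)$ plus the contributions of the at most a handful of edges in $I$, each bounded by $\beta$; condition~\eqref{eq:m_cond}, namely $\beta + (m+3)(r+\delta/2) < (m-1)(r+\delta)$, is exactly what guarantees the rerouted path is strictly cheaper. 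Hence any geodesic must avoid the expensive $A_{L,m}(z)$-type excursion; a careful version of this shows it must in fact travel along (a near-optimal part of) the highway, which forces it to pick up the designated $I$-weighted edge — or, if one prefers a cleaner formulation, one shows directly that on this event $\#\{e \in \pi(n,k) \cap B(z,mL) : \omega_e \in I\} \ge 1$. Taking $c = c_1$ then gives
$$
\P\big(\exists e \in \pi(n,k)\cap B(z,mL) : \omega_e \in I\big) \ge \E\big[\ind_{D_{L,m,k}(z)\cap A_{L,m}(z)}\,\P(\mathcal{G}\mid \mathcal{F}_{\text{out}})\big] \ge c\,\P\big(D_{L,m,k}(z)\cap A_{L,m}(z)\big).
$$

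The main obstacle I anticipate is the deterministic rerouting/cost-comparison step: one must verify that the highway built by the resampling genuinely connects the right parts of the geodesic (so that splicing it in yields an admissible left-right path of the rectangle of strictly smaller weight), that the spliced path stays inside $[0,n]\times[0,k-1]$, and — the subtler point — that the optimality of the new geodesic \emph{forces} it through an $I$-weighted edge rather than merely making the old excursion suboptimal. This is handled as in~\cite{cid23, gorski24, vdBK93} by arranging the resampled configuration so that \emph{every} sufficiently cheap traversal of $B(z,mL)$ must use such an edge (e.g.\ by surrounding the ball's interior with high-weight edges except along the highway), but getting the geometry right in the cylinder rather than the full plane — particularly near the top and bottom boundary of the slab — requires the minor modifications alluded to before the lemma, and is where I would spend the most care.
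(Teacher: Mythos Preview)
Your overall strategy---resample the weights in a neighbourhood of $z$, build a cheap alternative route, and use the cost comparison coming from~\eqref{eq:m_cond}---is the same as the paper's. Two specific points in your execution differ from the paper's argument and would need repair.

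\emph{The forcing mechanism.} You place a single designated $I$-edge on a low-weight highway and then hope the new geodesic uses it. As you yourself note, this does not follow: the geodesic may detour around that one edge or use a different cheap traversal of the ball. The paper sidesteps this by resampling only the annulus $B(z,mL)\setminus B(z,L)$ and defining the favourable event $G_{L,m}(z)$ so that \emph{every} edge with one endpoint on $\partial B(z,mL)$ and one in the interior has weight in $I$, while all other annulus edges have weight below $r+\delta/2$. Then any path entering the interior of $B(z,mL)$ is forced through an $I$-edge, and the cost comparison (your step using~\eqref{eq:m_cond}) shows that the new geodesic must enter the interior. This replaces your ``highway with one $I$-edge'' by a ``barrier of $I$-edges'', and is exactly what makes the implication deterministic rather than probabilistic.

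\emph{The role of $k\ge 2L$.} You say $k\ge 2L$ ensures $B(z,mL)$ fits in the slab; it does not (its vertical extent is $2mL$). In the paper, the annulus path $\gamma$ from the first to the last crossing point of $\partial B(z,mL)$ may leave the rectangle $[0,n]\times[0,k-1]$. The fix is to reroute $\gamma$ by circling around the inner ball $B(z,L)$, at the cost of at most $8L$ extra edges; $k\ge 2L$ is precisely what guarantees that this circling stays inside the rectangle. The extra $8L$ edges are absorbed into the constant in~\eqref{eq:m_cond}, which is why that condition reads $(m+3)$ rather than $(m-1)$.

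Finally, your displayed inequality $\E[\ind_{D\cap A}\,\P(\mathcal G\mid\mathcal F_{\text{out}})]$ is a bit dangerous as written, since $D_{L,m,k}(z)\cap A_{L,m}(z)$ is not $\mathcal F_{\text{out}}$-measurable. The clean way (and the paper's) is to take two independent configurations $\omega,\omega'$, evaluate $D\cap A$ on $\omega$ and $G$ on $\omega'$, form $\omega^\ast$ by splicing, and use that $\omega^\ast\stackrel{d}{=}\omega$ together with the deterministic inclusion $\{\omega\in D\cap A\}\cap\{\omega'\in G\}\subseteq\{\omega^\ast\text{ has an $I$-edge on its geodesic in }B(z,mL)\}$.
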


\begin{proof}
    Fix $z\in V_L$ such that $B(z,L)$ intersects the rectangle $[0,n]\times[0,k-1]$. Let $\omega,\omega'$ be two independent weight configurations on the $\Z^2$ lattice, both with marginal distribution $F$. We define a new weight configuration $\omega^\ast=(\omega_e^\ast)$ where
    \begin{equation}
        \omega_e^\ast:=\left\{
        \begin{aligned}
            \omega'_e & & \text{for }e\in B(z,mL)\setminus B(z,L)\\
            \omega_e & & \text{otherwise}.
        \end{aligned}
        \right.
    \end{equation}
    We may think of $\omega^\ast$ as the configuration obtained after resampling the original weight configuration $\omega$ on $B(z,mL)\setminus B(z,L)$. We let $G_{L,m}(z)$ denote the event that $\omega'_e\in I$ for every edge $e$ with one endpoint in the interior of $B(z,mL)$ and one on the boundary, and $\omega'_e<(r+\delta/2)$ for every other edge in $B(z,mL)\setminus B(z,L)$.

    Note that on the event $D_{L,m,k}(z)$ the path $\pi(n,k)$ crosses $\partial B(z,mL)$, visits the tile $B(z,L)$, and then crosses $\partial B(z,mL)$ again. Let $x$ and $y$ denote the first and last points on $\partial B(z,mL)$ visited by $\pi(n,k)$, when oriented from left to right. Let $\gamma$ be any path from $x$ to $y$ contained inside $B(z,mL)\setminus B(z,L)$ of minimal length. Note that it is possible that $\gamma$ is not fully contained in the rectangle $[0,n]\times[0,k-1]$. In this case we replace $\gamma$ by a path $\gamma'$ by circling around $B(z,L)$, adding up to $8L$ additional edges. (Circling around $B(z,L)$ is possible inside the rectangle since $k\ge 2L$, by assumption.) Note that both $\gamma$ and $\gamma'$ can be chosen to have length at most $2(m-1)L+8L$.
    
    On the event $D_{L,m,k}(z)\cap A_{L,m}(z)$, the segment of $\pi(n,k)$ between $x$ and $y$ picks up weight at least $2(m-1)L(r+\delta)$, whereas on the event $G_{L,m}(z)$, the paths $\gamma$ and $\gamma'$ pick up weight at most $2\beta+\big(2(m-1)L+8L\big)(r+\delta/2)$, which by assumption~\eqref{eq:m_cond} is strictly smaller. That is, if $G_{L,m}(z)$ occurs after `resampling' the weight configuration on $B(z,mL)\setminus B(z,L)$, the paths $\gamma$ and $\gamma'$ are at least as `fast' as the corresponding segment of $\pi(n,k)$ was before resampling. Since no edges outside $B(z,mL)\setminus B(z,L)$ are resampled, it follows that after resampling there also exists a path that visits the interior of $B(z,mL)$ that is `faster' than any path that does not. Consequently, the geodesic will continue to visit the interior of $B(z,mL)$, and hence pick up an edge inside $B(z,mL)$ with weight in $I$.

    More formally, the above discussion amounts to the statement
    $$
    \{\omega\in D_{L,m,k}(z)\cap A_{L,m}(z)\}\cap\{\omega'\in G_{L,m}(z)\}\subseteq\big\{\omega^\ast\in\{\exists e\in\pi(n,k)\cap B(z,mL)\text{ with weight in }I\}\big\}.
    $$
    By independence of $\omega$ and $\omega'$, and since $\omega$ and $\omega^\ast$ are equal in distribution, we have that
    \begin{align*}
    \P\big(\exists e\in\pi(n,k)\cap B(z,mL)\text{ with weight in }I\big)&\ge\P\big(\omega\in D_{L,m,k}(z)\cap A_{L,m}(z),\omega'\in G_{L,m}(z)\big)\\
    &=\P\big(D_{L,m,k}(z)\cap A_{L,m}(z)\big)\P\big(G_{L,m}(z)\big),
    \end{align*}
    as required.
\end{proof}

We are now ready to complete the proof of the proposition.

\begin{proof}[Proof of Proposition~\ref{prop:empirical}]
    Fix $m$ so that~\eqref{eq:m_cond} holds and $L$ so that the conclusion of Lemma~\ref{lma:grains} is valid. Note that it suffices to prove the proposition for large $n$, as for small $n$ we can always lower bound the expectation by $n$ times the probability that all edges in the rectangle $[0,n]\times[0,k-1]$ take values in $I$. Moreover, it will suffice to prove the proposition for $k\ge2L$, since for $k$ bounded the number of weights in $I$ picked up by the geodesic can be lower bounded by the number of vertical lines where all edges reaching beyond the line have weight in $I$. We hence take $n\ge2mL$ and $n\ge k\ge2L$.

    Since each edge belongs to $B(z,mL)$ for $m^2$ different $z\in V_L$, we have that
    $$
    \E\big[\#\{e\in\pi(n,k):\omega_e\in I\}\big]=\frac{1}{m^2}\E\bigg[\sum_{z\in V_L}\sum_{e\in B(z,mL)}\ind_{\{e\in\pi(n,k)\text{ and }\omega_e\in I\}}\bigg],
    $$
    which by Lemma~\ref{lma:resampling} is lower bounded by
    $$
    \frac{1}{m^2}\sum_{z\in V_L}\P\big(\exists e\in\pi(n,k)\cap B(z,mL)\text{ such that }\omega_e\in I\big)\ge\frac{1}{m^2}\sum_{z\in V_L}c\,\P\big(D_{L,m,k}(z)\cap A_{L,m}(z)\big).
    $$
    The fact that each endpoint of $\pi(n,k)$ can be in $B(z,mL)$ for at most $m^2$ different $z\in V_L$ gives the further lower bound
    $$
    \frac{c}{m^2}\Big(\E\big[\#\{z\in V_L:\pi(n,k)\text{ visits $B(z,L)$ and $A_{L,m}(z)$ occurs}\}\big]-2m^2\Big).
    $$
    Finally, since there are $k$ different starting and endpoints for $\pi(n,k)$, and $k\le n$, we obtain from Lemma~\ref{lma:grains} that
    $$
    \E\big[\#\{e\in\pi(n,k):\omega_e\in I\}\big]\ge\frac{c}{m^2}\Big(\theta n\big(1-n^2C\exp(-\gamma n)\big)-2m^2\Big),
    $$
    which for large $n$ gives a lower bound of the form $c'n$, for a constant $c'=c'(L,m)$ not depending on $k$ and $n$, as required.
\end{proof}

\printbibliography

\end{document}

The reader may ask themselves, why we dealt with $V_{k}^{n\star}$ instead of directly looking at $T_{k}^{n}.$ We begin this answer by looking at the case when $k$ is a constant. Observe that, the case $k=1$ coincides with the Majority function, which we know is Noise Stable in the following sense (see \cite{BKS99}, \cite{garban_steif_book}, \cite{odonnell_2014} for more results regarding the Majority function).
The sequence $g_{n}$ is Noise Stable if 
$$ \lim_{\eps\rightarrow0}\sup_{n} \P(f_{n}(\omega) \neq f_{n}(\omega_{\eps}) )=0. $$
One may also expect (via a correlation with the Majority function argument) that when $k$ is a constant $k\geq2$ the sequence $\mathbbm{1}_{T^{n}_{k}<m}$ is also Noise Stable, where we denote by $m,$ the median of $T^{n}_{k}$. However, the most interesting case is when $k(n) \rightarrow \infty$. First, observe that 
\begin{equation*}
\begin{split}
    Inf_{e}(\mathbbm{1}_{T^{n}_{k}<m}) &= 2\P(\omega_{e}=b,e\text{\ is pivotal})\\
    &\approx 2\P\big(e \in Geo,T_{k}^{n}\in [m,m+b-a)\big)\\
    &\approx\P\big(e\in Geo\big)\cdot\P\big(T_{k}^{n} \in [m,m+b-a)\big).
\end{split}
\end{equation*}
It is reasonable to expect a Local Limit Theorem to $T_{k}^{n},$ that is 
$$ P\big(T_{k}^{n} \in [m,m+b-a)\big) = O\bigg(\frac{b-a}{\sqrt{\Var(T_{k}^{n})}}\bigg). $$
Kesten proved in \cite{kesten_1980} that the length of the Geodesic of $T_{k}^{n},$ here shortened by Geo, is of order $n$. So, it is also reasonable to expect that 
$$\P(e\in Geo) = O\bigg(\frac{1}{k}\bigg). $$
So, we would get that 
$$  Inf_{e}(\mathbbm{1}_{T^{n}_{k}<m})= O\bigg(\frac{1}{k\sqrt{\Var(T_{k}^{n})}}\bigg), $$
and since the number of edges is of order $n\cdot k,$ we have 
$$\sum_{e}Inf_{e}^{2}(\mathbbm{1}_{T^{n}_{k}<m}) = O\bigg(\frac{n}{k}\cdot\frac{1}{\Var(T_{k}^{n})}\bigg). $$
In particular, to apply the BKS Theorem, we would need that $$\Var(T_{k}^{n}) \gg \frac{n}{k}.$$
The problem is that the current bounds of \cite{Chatterjee2009CentralLT} are not strong enough. However, the conjecture regarding the behavior of this variance states that 
$$ \Var(T_{k}^{n}) \approx \frac{n}{\sqrt{k}}. $$
So, this indicates that $T_{k}^{n}$ should be Noise Sensitive for any $k\rightarrow \infty$. Though confirming this conjecture is still a challenge, recently, Damron, Houdré, and Özdemir \cite{damronfluctuation} made advancements regarding the case in which the weights are exponentially distributed.
\par
Another reasonable question is if we could extend the results of this paper for different models. For instance, Itai Benjamini conjectured this when the two endpoints are fixed, instead of considering a left to right crossing like us. How would this change our proof method?
Also fix some equations that are beyond the required length.

\section{Earlier introduction}

 In 1999, Benjamini, Kalai, and Schramm \cite{BKS99} introduced the concept of Noise Sensitivity, which initiated a new dimension of study in stochastic systems. Their paper, defined and characterized Noise Sensitivity in the context of Boolean functions, that is, functions of the type $f:\{0,1\}^{n}\rightarrow\{0,1\},$ as well as developing methods to deduce Noise Sensitivity. More specifically, one of the most important applications of their result concerns the Noise Sensitivity of crossings for Bernoulli percolation on $\Z^{2}.$\par
 Itai Benjamini asked on Daniel Ahlberg's thesis defense (September 30th 2011) if `being above the median' is a Noise Sensitive property. Even though since then, many works further developed the Noise Sensitivity theory introduced in \cite{BKS99}, there are few examples of Noise Sensitivity beyond connectivity in planar percolation models. For instance,  Sourav Chatterjee presented a rigorous theory regarding the notion of chaos \cite{Chatterjee2008},  that deal with a type of Noise Sensitivity but in a slightly different way. Our work goes in the direction of analyzing Noise Sensitivity, when the observed value is a real number, in a closer context to the original BKS paper. Of course we may argue which notion of Noise Sensitivity or chaos is more relevant, but in this paper, we intend to investigate a direction in Noise Sensitivity that was not yet studied. Inspired by Itai Benjamini's original conjecture, we will prove in the setting of First Passage percolation, that if you break a $n$ by $n$ square into $n-k+1$ rectangles of height $k$, and consider random variable $V_{k}^{n\star},$ as the indicator that the minimum over all box diameters $T_{j,k}$, and then, consider the indicator of $V_{k}^{n\star}$  being smaller than the median $m$; this sequence of functions is Noise Sensitive. The outline of the proof will be to prove a Cram\'er type Moderate Deviations result to determine the distribution of the diameter of a single rectangle $T_{j,k},$ when it is far from its mean. With that, we will estimate that the median is non trivial, an upper bound for the influences, and finally apply the BKS Theorem in order to prove that this sequence of events is Noise Sensitive.
\par 
For readers interested in the notion of chaos, Sourav Chatterjee has it detailed in his book \cite{Chatterjee_2014} as well as more results can be found in his article \cite{Chatterjee2009DisorderCA}. Furthermore, Ganguly and Hammond \cite{Ganguly2020StabilityAC} have worked in this direction with a model closer to ours.\par
Other important results for Noise Sensitivity theory, came from a number of conjectures left in \cite{BKS99}. A very natural one asked if we could attain stronger quantitative bounds on the Noise Sensitivity of crossings in Bernoulli percolation. Analyzing Bernoulli percolation on the triangular lattice, Schramm, and Steif \cite{SS10} proved quantitative Noise Sensitivity results, and Garban, Pete, and Schramm \cite{GPS10} proved the sharp Noise Sensitivity theorem. More recently, Tassion and Vanneuville \cite{TV} proved the same sharp Noise Sensitivity Theorem without relying on spectral tools as in \cite{GPS10}.\par
Going in another direction, some papers extended the original Noise Sensitivity results for percolation models in the continuum. For instance, Ahlberg, Broman, Griffiths, Morris \cite{ABGM14}, proved the this type of result for the first time when analyzing the Poisson Boolean model. Furthermore, Ahlberg, Morris, Griffiths, and Tassion \cite{QuenchedVoronoi} confirmed a conjecture left in \cite{BKS99} regarding another continuum model, known as Vornoi percolation, which was further studied by Ahlberg, de la Riva, and Griffiths \cite{dAG}. Garban and Vanneuville \cite{BargmannFock} studied Noise Sensitivity for Bergman-Fock percolation and more recently; Last, Peccati, and Yogeshwaran \cite{LPY}, proved new results or the Poisson Boolean model and planar Confetti percolation model.
\par 
Now, let turn ourselves back to properly state our main results. Consider the grid $\Z^{2}$ and assign for each edge a weight that equals $a$ with probability $1/2$ and $b$ with probability $1/2,$ where $0\leq a<b<\infty$. We denote by $\P$, the probability measure associated with this distribution, and by $\E$ the corresponding expectation.  Now, consider the square $[0,n]\times[0,n]$. Divide it into rectangles with length $n$ and $k$ stripes,  where $k\geq 1$ is a fixed or a polynomial. So, define $T^{n}_{j,k}$ to be the minimum distance to cross the horizontal rectangle from $\{0\}\times[j,j+k)$ to $\{n\}\times[j,j+k)$. Furthermore, define the random variable $$V^{n\star}_{k}= \min_{0\leq j\leq n-k+1}T^{n}_{j,k}.$$
Next, we define the median of a random variable X, as any value $m$ such that $\P(X\geq m)\geq 1/2,$ and $\P(X \leq m)\geq1/2.$ In this paper, we will be interested in proving two results. The first one, that the event of being below any median $m^{\star}(V^{n\star}_{k})$ of $V_{k}^{n\star},$ which we will simplify the notation for $m^{\star}$, is a non trivial median. We are actually able to do more than that and prove in Section \ref{secnoise} that for any median $m^{\star}$ we have $\P(V_{k}^{n\star} < m^{\star} ) \rightarrow 1/2$. Our second result shows that the sequence of functions $\mathbbm{1}_{V_{k}^{n\star} < m^{\star}}$ is Noise Sensitive in the usual sense 
$$\E[\mathbbm{1}_{V_{k}^{n\star} < m^{\star}}\mathbbm{1}_{V_{\eps}^{n\star} < m^{\star}}]- \E[\mathbbm{1}_{V_{k}^{n\star} < m^{\star}}]^{2} \rightarrow 0, $$
where $V_{\eps}^{n\star}$ is a $\eps>0 $ re-sampling of $V_{k}^{n\star}$. Notice that our first result guarantees that this sequence is Noise Sensitive in a non-degenerate sense.\par
To be more precise, our main result generalizes the indicators beyond the median. In order to properly state it, we also define for each $\beta \in (0,1),$ $q_{\beta}(X)$, the $\beta-$quantile of a random variable $X,$ as any real value satisfying $\P(X \geq q_\beta ) \geq 1-\beta,$ and $\P(X\leq q_\beta ) \geq \beta.$ Observe that when $\beta=1/2,$ this coincides with the definition of a median. Our main result proved in Section \ref{secfinal} states that

\begin{theorem}\label{maintheorem}
Let $k\geq 1$ be a constant or a polynomial $n^{\alpha}$ with $0<\alpha < \frac{1}{11} $. Also, let $\beta\in(0,1)$. For any value $q^{\star}_{\beta},$ a $\beta-$quantile of $V_{k}^{n\star}$,  we have that $\P(V_{k}^{n\star}<q^{\star}_{\beta})\rightarrow\beta $ and the sequence of functions $h_{n}^{\star}:=\mathbbm{1}_{\big\{V^{n\star}_{k}<q^{\star}_{\beta}\big\}}$ is Noise Sensitive. 
\end{theorem}

In order to prove this result, we will take a step back. Instead of considering all the rectangles of height $k,$ we will only consider the disjoint ones, which will make them independent simplifying our proof. So, define the random variable $$V^{n}_{k}= \min_{0\leq j\leq \floor{\frac{n}{k}}-1}T^{n}_{jk,k}$$
and note that the possible leftover debris (when $n$ is not a multiple of $k$) is irrelevant. So, analogously to Theorem \ref{maintheorem}
\begin{theorem}\label{sidetheorem}
 Let $k\geq 1$ be a constant or a polynomial $n^{\alpha}$ with $0<\alpha < \frac{1}{11} $. Let $\beta\in(0,1)$. For any value $q_{\beta},$ a $\beta-$quantile of $V_{k}^{n}$, we have that $\P(V_{k}^{n}<q_{\beta})\rightarrow\beta $ and the sequence of functions $h_{n}:=\mathbbm{1}_{\big\{V^{n}_{k}<q_{\beta}\big\}}$ is Noise Sensitive.
 \end{theorem}
 The proof of Theorem \ref{sidetheorem} can be found in Section \ref{SecSideThm}, but we will now outline it. In order to prove that $\P(V_{k}^{n}<q_{\beta})\rightarrow\beta $ and the sequence of functions $h_{n}:=\mathbbm{1}_{\big\{V^{n}_{k}<q_{\beta}\big\}}$ is Noise Sensitive (which will be proven from an application of the BKS Theorem, further detailed in this section) we will first need to understand the distribution of $T_{j,k}^{n}$. A powerful tool to control the asymptotic behavior of remote tails (which will turn out to be our case) is Cram\'ers Moderate Deviations Theorem. As stated in \cite{feller-vol-2}, if we have a sequence of i.i.d random variables $X_{1}, X_{2}, ...$, with mean zero and variance $\sigma^{2}>0,$ satisfying that $\E[e^{tX_{1}}] < \infty,$ in a neighborhood of the origin, then, for $1 << x << n^{1/6}$
 \begin{equation}
     \lim_{n\rightarrow\infty}\P\Big(\sum_{i=1}^{n}\frac{X_{i}}{\sigma}> \sqrt{n}x\Big) = (1+o(1))[1-\Phi(x)],
 \end{equation}
 where $\Phi$ is the cumulative distribution function for the standard normal.\par
 One can see that in the case $k=1$, the random variable $V_{1}^{n}$ that we are looking at, is the minimum over all strips, which are simply a sum of Bernoulli distributed random variables. Hence, Cram\'er's Theorem is applicable and we can directly obtain estimates from this classical result. This fact, will simplify a lot our proof and since it follows the same structure of the proof of our main theorem, that is, characterizing the distribution of $T^{n}_{k,n}$ and estimating the influences, we will deal with it separately, as a sort of prelude in Section \ref{sectribes} for an extended concept of Generalized Tribes, which is naturally comparable to $V_{1}^{n}$. \par
 The problem is that when $k\geq 2$ we will not be able to directly apply Cram\'ers result. In order to attain the desired estimates for distribution of $T_{j,k},$ we begin by following a strategy done in \cite{Chatterjee2009CentralLT}. Observe that we can break the rectangle $[0,n]\times[0,k),$ into $M$ smaller rectangles with with length $n/M$. If we define for each $\ell,$ the random variable denoting the minimum horizontal crossing of the $\ell-$th rectangle, we have that 
 $$ \sum_{\ell=1}^{M} Y^{n}_{\ell} \leq T^{n}_{0,k} \leq \sum_{\ell=1}^{M} Y^{n}_{\ell} + k.b.M,$$
 where the error time is the maximum cost of crossing a vertical stripe. As you can observe, there will be a suitable choice of $M,$ that will make this error term meaningless. So, the main idea, will be to apply a Moderate Deviations type Theorem to $\{Y_{\ell}^{n}\},$ that will be valid for $T^{n}_{0,k}$, which we will shorten by $T_{k}^{n},$ and hence all the other $T^{n}_{j,k}$, since the error term won't make a difference.\par
 The other problem is that, still, $Y_{\ell}^{n}$, are distributions that change as $n$ grows, and so, we cannot use the classical Cram\'er's Moderate Deviations Theorem. So, a new type of Moderate Deviations result, adapted from a proof in \cite{feller-vol-2}, will be done in Section \ref{secnoise}. This result allows us to generalize Cram\'er's Theorem for a sequence of  independent random variables $\{X_{i}^{(n)}\},$ that change their distribution over time, as long as they all obey a bound over their moments. Essentially, for the i.i.d case, that for a universal constant $C$ bigger than one, $\E[|X|^{j} \leq j!(C \E[X^{2}])^{j/2},$ for every $j$.\par
 We will then in Section \ref{SecDistribution}, show that our sequence of functions $Y_{\ell}^{n}$ obey this condition, and that for the right choice of $M$, we can obtain the desired estimate for $T_{k}^{n}$ . To give a feeling for the reader, for the $k$ constant case, we get that for $x\rightarrow \infty,$ with $x=o(n^{1/14}),$ then 
 $$ \P\bigg(T^{n}_{k}- \E[T^{n}_{k}] <-\sqrt{\Var(T^{n}_{k})}x\bigg)= \Big[1-\Phi\Big(x\Big)\Big]\Big[1+O\bigg(\frac{x^{2}}{n^{\frac{1}{14}}}\bigg)\Big]. $$
 The general case allowing $k$ to be a polynomial has a bit more involved bound that we precisely state in Section \ref{SecDistribution}.\par
 Finally, we want to apply that in Section \ref{SecSideThm} to prove Theorem \ref{sidetheorem}. The first part of the theorem, that is $\P(V_{k}^{n}< q_{\beta}) \rightarrow \beta$, will be easily derived from the estimates on $T_{k}^{n}$.\par
 In order to prove that this sequence of functions is Noise Sensitive, we will apply the BKS Theorem as previously mentioned. For that to be properly defined, we must first introduce the definition of influence.
So, consider the sequence  $g_{n}: \{0,1\}^{n} \rightarrow \{0,1\}.$ Consider the element $\omega:=(\omega_{1},...,\omega_{n}) \in \{0,1\}^{n}$ and $\omega^{(i)}:=(\omega_{1},...,|\omega_{i}-1|,...,\omega_{n}),$ where you flip the $i-$th bit of $\omega$, for any $1 \leq i \leq n$. We define the Influence of the $i-$th bit as 
$$Inf_{i}(g_{n}):= \P(g_{n}(\omega) \neq g_{n}(\omega^{(i)})). $$ 
The BKS Theorem states that if a sequence of functions $g_{n}$ satisfies that
$$\sum_{m=1}^{n}Inf_{m}(g_{n})^{2} \rightarrow 0, $$
then $g_{n}$ are Noise Sensitive. Furthermore, if $g_{n}$ is a sequence of monotone functions, then the converse is also true.\par
Naturally, our goal will be to use our estimates on $T_{k}^{n}$ to prove that for a positive $\theta>0,$ the $\max_{e\in E}Inf_{e}(\mathbbm{1}_{V_{k}^{n} <q_{\beta}}) \leq 1/n^{1+\theta},$ where $E$ denotes all the edges in the square $[0,n]\times[0,n]$. And since we have $n^{2}$ edges, we can apply the BKS Theorem to conclude the result.  \par
Lastly, we show in Section \ref{secfinal}, using our results of the proof of Theorem \ref{sidetheorem}, that when allowing all smaller rectangles we also have that $\P(V_{k}^{n\star}<q^{\star}_{\beta})\rightarrow\beta $ and there exists $\theta^{\star}>0$, $\max_{e\in E}Inf_{e}(\mathbbm{1}_{V_{k}^{n\star} <q^{\star}_{\beta}}) \leq 1/n^{1+\theta^{\star}},$ concluding the proof of Theorem \ref{maintheorem}.\par
 So, the article will have the following structure. In Section \ref{sectribes} we do a type of interlude by solving the simplest case, when we take $V_{1}^{n},$ where Cram\'er's Theorem is available. Since, for $k\geq 2$, Cram\'er's Theorem is not available applicable we adapt Moderate Deviations result from Feller \cite{feller-vol-2} to our context in Section \ref{secnoise}. After that, we will apply such result to control the distribution of $T_{j,k}^{n}$ in Section \ref{SecDistribution}. With that in hand, we will prove in Section \ref{SecSideThm} the non-triviality of the $\beta-$quantiles and estimate the influences to conclude Theorem \ref{sidetheorem}. Finally, we prove that the extra intersecting rectangles are not really relevant, in order to prove our main result Theorem \ref{maintheorem} in Section \ref{secfinal}.
 In Section \ref{conjectures} we make some heuristic arguments regarding what we expect for our question ``Is being above the median a Noise Sensitive property?" and we leave some open problems that might indicate future directions of research.

\section{Previous write-up}

 For a certain $\gamma \in (0,1),$ break the rectangle $[0,n]\times[0,k)$ into $M(n):= \lfloor n^{1-\gamma} \rfloor$ smaller rectangles with length $\lfloor \frac{n}{\lfloor n^{1-\gamma}\rfloor} \rfloor$ or $\lfloor\frac{n}{\lfloor n^{1-\gamma}\rfloor}\rfloor+1$ by $[0,k).$ Define for each $\ell$, $Y^{n}_{\ell}$ as the random variable denoting the minimum horizontal crossing of the $\ell$-th rectangle counting from left to right. Note that 
\begin{equation}\label{eq: relation}
\sum_{\ell=1}^{M} Y^{n}_{\ell} \leq T^{n}_{k} \leq \sum_{\ell=1}^{M} Y^{n}_{\ell} + k.b.M,  
\end{equation}
where the upper bound comes from noticing that the maximum cost of a vertical stripe is $k.b,$ and we have $M$ of them. Indeed, if we have hopes of producing a theorem similar to Theorem \ref{Simon}, we desire this error term to be $o(\sigma_{n}\sqrt{M})$, where $\sigma^{2}_{n}= \Var(Y^{n}_{1}-\E[Y^{n}_{1}])$. Using Proposition $5.1$ of \cite{Chatterjee2009CentralLT}, we may further specify this error term since $\sigma_{n}=O(\sqrt{n/M})$, and finally conclude that we want $k.b.M = o(\sqrt{n}).$ So, we are looking at rectangles of horizontal length approximately equal to $n^{\gamma},$ where $\gamma>1/2$. Later we will find out what's the suitable interval of choice for $\gamma$. \par
We will divide in two cases. The first one, when $k$ is a constant, and the second one, when $k$ is a polynomial $n^{\alpha}$ with $0<\alpha < \frac{1}{11} $. For the proof we will use the following result by \cite{Chatterjee2009CentralLT}. We know that there exists $c>1$ such that for all $\ell\geq1,$
\begin{equation}\label{Xbound}
\frac{1}{k}\cdot \frac{1}{c}\cdot n^{\gamma}\leq\Var(Y_{\ell}^{n}) \leq c\cdot n^{\gamma},
\end{equation}
\begin{equation}\label{Tbound}
\frac{1}{k}\cdot \frac{1}{c}\cdot n\leq\Var(T_{k}^{n}) \leq c\cdot n.  
\end{equation}
Furthermore, when $k=o(n^{1/2})$ we have the following bound over the moments. For all $j\geq2$, $\ell\geq1$ and $n$ we have that 
\begin{equation}\label{Mbound}
\E[|Y^{n}_{\ell}-\E[Y^{n}_{\ell}]|^{j}|] \leq (2j)^{j}n^{\gamma j/2}.
\end{equation}
 In the following theorem, we will deal with the cases when $k$ is a constant or a polynomial with small degree. The proofs of each case follow more or less the same structure, with some small differences in computations, especially, when applying Theorem \ref{ldthm2}. So, we will deal with $k$ implicitly, until it is necessary to divide.

\begin{proof}
We will utilize $M$ as defined in the beginning of this section when it is convenient, so that the notation of the proof is not too heavy for the reader. Note that applying the relation established in Equation \eqref{eq: relation} for $T_{k}^{n}$ and $\E[T_{k}^{n}]$, we have that 
\begin{equation}
\begin{split}
    \P\bigg(\sum_{\ell=1}^{M} \Big(Y^{n}_{\ell}-\E[Y^{n}_{\ell}]\Big) < -\sqrt{\Var(T_{k}^{n})}x-kbM&\bigg)\leq\P\bigg(T^{n}_{k}-\E[T^{n}_{k}]<-\sqrt{\Var(T_{k}^{n})}x\bigg)\\
    &\leq\P\bigg(\sum_{\ell=1}^{M} \Big(Y^{n}_{\ell}-\E[Y^{n}_{\ell}]\Big)
    < -\sqrt{\Var(T_{k}^{n})}x+kbM\bigg)   
    \end{split}
    \end{equation}
Define by $X^{n}_{\ell}:=Y^{n}_{\ell}-\E[Y^{n}_{\ell}],$ and also define $\sigma^{\star}$ by the following equation:
$$\sigma^{*}:= \sqrt{\frac{\sum_{\ell=1}^{M}\Var X^{n}_{l}}{M}}.$$
 By Equations \eqref{Xbound}, we have that for $\gamma>1/2$,
 \begin{equation}\label{eq:sigma}
    \frac{1}{\sqrt{c}}\cdot \frac{n^{\gamma/2}}{\sqrt{k}}\leq\sigma^{\star} \leq \sqrt{c}\cdot n^{\gamma/2},
    \end{equation}
which eventually is bigger than $1$ for every $n$, and we can consider the sequence from that. Finally, observe that if we denote by $S_{n}:=\sum_{\ell=1}^{M} X_{\ell}^{n}, $ we have that Equation \eqref{eq: relation}
$$\bigg|\sqrt{\E\Big[\big(T_{k}^{n}-\E[T_{k}^{n}]\big)^{2}\Big]}- \sqrt{\E\Big[\big(S_{n}-\E[S_{n}]\big)^{2}\Big]} \bigg| \leq \sqrt{\E\Big[\big(T_{k}^{n}-\E[T_{k}^{n}] - S_{n}\big)^{2}\Big]} \leq 2kbM. $$
So, dividing by $\sqrt{Var(T_{k}^{n})}$, using the lower bound of Equation \eqref{Tbound}, and $M\leq n^{1-\gamma}$ that
$$\Big|1 - \frac{\sigma^{\star}\sqrt{M}}{\sqrt{\Var(T_{k}^{n})}} \Big| =O( n^{\frac{1-2\gamma}{2}}\cdot k^{3/2}).$$
In particular, the following form will be useful for us:
\begin{equation}\label{eqvariance2}
\frac{\sqrt{\Var(T_{k}^{n})}}{\sigma^{\star}\sqrt{M}} =1+O( n^{\frac{1-2\gamma}{2}}\cdot k^{3/2}),  
\end{equation}
that is valid when $ n^{\frac{1-2\gamma}{2}}\cdot k^{3/2}\rightarrow 0.$
By our bound on Equation \eqref{Mbound}, Stirling's Inequality, and the lower bound on the previous estimate \eqref{eq:sigma} on $\sigma^{\star}$, we can derive that there exists a $C>1$, such that for all $j \geq 2,$ $\ell\geq 1,$ and $n$, 
\begin{equation}\label{Mboundapp}
 \E[|X^{n}_{\ell}|^{j}]\leq (2j)^{j}(\sqrt{C}\cdot \sqrt{k}\cdot\sigma^{\star})^{j}.
 \end{equation}
Now, we want to apply Theorem \ref{ldthm2}. For that, as previously mentioned, we will divide in the cases where $k$ is a constant and a polynomial.\par 
\textbf{Case 1:} Let $k\geq 2$ be a constant. So, we have that there exists a constant $C^{\star}>1$, such that for all $j \geq 2,$ $\ell\geq 1,$ and $n$, we have that 
$$ \E[|X^{n}_{\ell}|^{j}]\leq j!\cdot(C^{\star}\sigma^{\star})^{j}.$$
Hence, we are obey the hypothesis to apply Theorem \ref{ldthm2} when $\delta=0$. Observe that if we denote by $F_{n}$ the distribution of 
$$\frac{X^{n}_{1}+...+X^{n}_{M}}{\sigma^{\star}\sqrt{M}}, $$
we have that Theorem \ref{ldthm2} analogously applies for $-x$, and  $F_{n}(-x)$, with $x\rightarrow\infty$. With all that in mind, let us do some estimates that will put us in condition to apply Theorem \ref{ldthm2}. Observe that Equation \eqref{eqvariance2} is valid since $\gamma>1/2$, and we have that 
 $$ \frac{\sqrt{\Var(T_{k}^{n})}}{\sigma^{\star}\sqrt{M}} =1+O( n^{\frac{1-2\gamma}{2}}).   $$
 Furthermore, by \eqref{eq:sigma} and the fact that $M \leq n^{1-\gamma},$ we have that 
 $$ \frac{kb\sqrt{M}}{\sigma^{\star}} = O(n^{\frac{1-2\gamma}{2}}). $$
So, since $x\rightarrow\infty,$ we know that there exists $c_{0}\geq 1$, such that
$$\P\bigg(\sum_{\ell=1}^{M}  \frac{X^{n}_{\ell}}{\sigma^{\star}\sqrt{M}} < -x\Big(1+c_{0}\cdot n^{\frac{1-2\gamma}{2}}\Big)\bigg)\leq\P\bigg(\sum_{\ell=1}^{M}  \frac{X^{n}_{\ell}}{\sigma^{\star}\sqrt{M}} < -x\bigg(\frac{\sqrt{\Var(T_{k}^{n})}}{\sigma^{\star}\sqrt{M}}+\frac{kb\sqrt{M}}{x\cdot\sigma^{\star}}\bigg)\bigg). $$
Choose $\gamma = 4/7$. The conditions of the statement are applicable when 
$$ x = o(n^{\frac{1}{14}}).$$
So, we actually get 
$$\P\bigg(\sum_{\ell=1}^{M}  \frac{X^{n}_{\ell}}{\sigma^{\star}\sqrt{M}} < -x\Big(1+c_{0}\cdot\frac{1}{n^{1/14}}\Big)\bigg)$$
Applying Theorem \ref{ldthm2} when $\delta=0$,  and having in mind that by Equation \eqref{eq:normal2}  
  $$ \bigg[1-\Phi\bigg(x\Big(1+ c_{0}\cdot\frac{1}{ n^{\frac{1}{14}}}\Big)\bigg)\bigg] = \Big[1+O\bigg(\frac{x^{2}}{n^{\frac{1}{14}}}\bigg)\Big] \bigg[1-\Phi(x)\bigg], $$
   we obtain  
$$ \P\bigg(\sum_{\ell=1}^{M} \Big(Y^{n}_{\ell}-\E[Y^{n}_{\ell}]\Big) < -\sqrt{\Var(T_{k}^{n})}x-kbM\bigg) = \Big[1-\Phi\Big(x\Big)\Big]\Big[1+O\bigg(\frac{x^{2}}{n^{\frac{1}{14}}}\bigg)\Big]\Big[1 + O\bigg(\frac{x^{3}}{n^{\frac{3}{14}}}\bigg)\Big]. $$
Analogously, we have for the upper bound
$$ \P\bigg(\sum_{\ell=1}^{M} \Big(Y^{n}_{\ell}-\E[Y^{n}_{\ell}]\Big) < -\sqrt{\Var(T_{k}^{n})}x+kbM\bigg) =  \Big[1-\Phi\Big(x\Big)\Big]\Big[1 + O\bigg(\frac{x^{3}}{n^{\frac{3}{14}}}\bigg)\Big]\Big[1+O\bigg(\frac{x^{2}}{n^{\frac{1}{14}}}\bigg)\Big]. $$
And since $x=o(n^{1/14}),$ we can finally conclude  
$$\P\bigg(T^{n}_{k}-\E[T^{n}_{k}]<-\sqrt{\Var(T_{k}^{n})}x\bigg) =  \Big[1-\Phi\Big(x\Big)\Big]\Big[1+O\bigg(\frac{x^{2}}{n^{\frac{1}{14}}}\bigg)\Big].$$\par
\textbf{Case 2:} When $k$ is a polynomial $n^{\alpha}$. Notice that by \eqref{Mboundapp} and our lower bound on \eqref{eq:sigma}, we get that there exists a $C^{\star}>1$, such that for all $j \geq 2,$ $\ell\geq 1,$ and $n$, 
$$ \E[|X^{n}_{\ell}|^{j}]\leq (2j)^{j}(\sqrt{c}\cdot n^{\alpha/2}\sigma^{\star})^{j} \leq j!\cdot(C^{\star}\sigma^{\star})^{j(1+\frac{\alpha}{\gamma-\alpha})}.$$
In order to apply Theorem \ref{ldthm2}, we demand that $\alpha<\gamma-\alpha,$ and choose $\delta = \frac{\alpha}{\gamma-\alpha}.$ Once again, we will do some estimates so we can be in the exact condition to apply Theorem \ref{ldthm2}. So, by \eqref{eqvariance2}, we have that 
$$\frac{\sqrt{\Var(T_{k}^{n})}}{\sigma^{\star}\sqrt{M}} =1+O( n^{\frac{1-2\gamma+3\alpha}{2}}),$$
when $1-2\gamma+3\alpha<0.$ Besides, by our lower bound on \eqref{eq:sigma}, and the fact that $M \leq n^{1-\gamma},$ 
$$\frac{kb\sqrt{M}}{\sigma^{\star}} = O(n^{\frac{1-2\gamma+3\alpha}{2}}). $$
Combining this, we get that there exists a $c_{0}\geq1,$ such that
$$\P\bigg(\sum_{\ell=1}^{M}  \frac{X^{n}_{\ell}}{\sigma^{\star}\sqrt{M}} < -x\Big(1+c_{0}\cdot n^{\frac{1-2\gamma+3\alpha}{2}}\Big)\bigg)\leq\P\bigg(\sum_{\ell=1}^{M}  \frac{X^{n}_{\ell}}{\sigma^{\star}\sqrt{M}} < -x\bigg(\frac{\sqrt{\Var(T_{k}^{n})}}{\sigma^{\star}\sqrt{M}}+\frac{kb\sqrt{M}}{x\cdot\sigma^{\star}}\bigg)\bigg). $$
 Notice that Equation \eqref{eq:normal2} gives us that
\begin{equation}\label{normalaprox2}
     \bigg[1-\Phi\bigg(x\Big(1+c_{0}\cdot n^{\frac{1-2\gamma+3\alpha}{2}}\Big)\bigg)\bigg] =\bigg[1-\Phi(x)\bigg] \bigg[1+O\bigg(\frac{x^{2}}{n^{\frac{(2\gamma-1-3\alpha)}{2}}}\bigg)\bigg] . 
     \end{equation}
Hence, for $$x=o\big(\min(\frac{n^{\frac{(1-\gamma)}{6}}}{n^{\frac{\gamma\alpha}{2(\gamma-\alpha)}}};\frac{n^{\frac{(1-\gamma)}{2}}}{n^{\frac{3\gamma\alpha}{2(\gamma-\alpha)}}})\big)=o(n^{\frac{(1-\gamma)(\gamma-\alpha)-3\gamma\alpha}{6(\gamma-\alpha)}}),$$ we may use Theorem \ref{ldthm2}, and Equation \eqref{normalaprox2}, to get that
$$\P\bigg(\sum_{\ell=1}^{M} Y^{n}_{\ell}-\E[Y^{n}_{\ell}] < -\sqrt{\Var(T_{k}^{n})}x-kbM\bigg) =\bigg[1-\Phi(x)\bigg]\bigg[1+O\bigg(\frac{x^{2}}{n^{\frac{(2\gamma-1-3\alpha)}{2}}}\bigg)\bigg] \bigg[1+O\bigg(\frac{x^{3}}{n^{\frac{(1-\gamma)(\gamma-\alpha)-3\gamma\alpha}{2(\gamma-\alpha)}}}\bigg)\bigg].$$
For the upper bound, we have that there exists a constant $c_{1}\geq 1,$ such that 
$$\P\bigg(\sum_{\ell=1}^{M}  \frac{X^{n}_{\ell}}{\sigma^{\star}\sqrt{M}} < -x\Big(\frac{\sqrt{\Var(T_{k}^{n})}}{\sigma^{\star}\sqrt{M}}-\frac{kbM}{x\cdot\sigma^{\star}}\Big)\bigg) \leq \P\bigg(\sum_{\ell=1}^{M}  \frac{X^{n}_{l}}{\sigma^{\star}\sqrt{M}}
    < -x\Big(1-c_{1}\cdot n^{\frac{1-2\gamma+3\alpha}{2}}\bigg). $$
By analogous computations we can finally conclude that for $x=o(n^{\frac{(1-\gamma)(\gamma-\alpha)-3\gamma\alpha}{6(\gamma-\alpha)}})$
\begin{equation}
\P\bigg(T^{n}_{k}- \E[T^{n}_{k}] <-\sqrt{\Var(T_{k}^{n})}x\bigg)= \bigg[1-\Phi(x)\bigg]\bigg[1+O\bigg(\frac{x^{3}}{n^{\frac{(1-\gamma)(\gamma-\alpha)-3\gamma\alpha}{2(\gamma-\alpha)}}}\bigg)\bigg]\bigg[1+O\bigg(\frac{x^{2}}{n^{\frac{(2\gamma-1-3\alpha)}{2}}}\bigg)\bigg],
\end{equation}
where we are naturally interested when $\frac{(1-\gamma)(\gamma-\alpha)-3\gamma\alpha}{2(\gamma-\alpha)}>0$.\par
Now, we wish to see verify for which $\alpha$ we can find a suitable choice of $\gamma>1/2$ that satisfies all the desired conditions. Observe that we demand that 
$$ 2\alpha < \gamma, $$
$$ 3\alpha < 2\gamma-1, $$
$$ \alpha < \frac{\gamma - \gamma^{2}}{2\gamma+1}. $$
Since $\gamma<1,$ notice that the second inequality implies the first inequality. Hence, we just need to consider the second and third inequalities. Observe that if we equal the boundaries of the second and third inequalities, we will get hat $\alpha = \frac{1}{168}(-32+8\sqrt{37})\approx 0.099<\frac{1}{11}, $ and $\gamma = \frac{1}{14}(3 +\sqrt{37}) $ is the solution for it. Since $\alpha$ is a harmonic function, we know that this is the maximum value attainable. Notice that the second and third inequalities imply the following condition on $\gamma$:

$$\gamma>(3\alpha+1)/2,$$
and 
$$\frac{1}{2}(1-2\alpha- \sqrt{1-8\alpha+4\alpha^{2}}) < \gamma < \frac{1}{2}(1-2\alpha+ \sqrt{1-8\alpha+4\alpha^{2}}). $$
With that, we conclude that 
$$ \frac{1}{2}(3\alpha+1) < \gamma < \frac{1}{2}(1-2\alpha+ \sqrt{1-8\alpha+4\alpha^{2}}), $$ 
which is a positive interval for $0<\alpha < \frac{1}{11}.$ Set 
$$\gamma^{\star} = \frac{\alpha+2+\sqrt{1-8\alpha+4\alpha^{2}}}{4}, $$
and we have the desired result.
\end{proof}
 
 \begin{theorem}\label{keyresult}
Let $k\geq2$ be a constant. Also, let $x \rightarrow \infty,$ with $x=o(n^{1/14})$. Then 
\begin{equation}\label{eq: mainequation}
\P\bigg(T^{n}_{k}- \E[T^{n}_{k}] <-\sqrt{\Var(T^{n}_{k})}x\bigg)= \Big[1-\Phi\Big(x\Big)\Big]\Big[1+O\bigg(\frac{x}{n^{\frac{1}{14}}}\bigg)\Big]
\end{equation}
\end{theorem}
\begin{proof}
We will utilize $M$ as defined in the beginning of this section when it is convenient, so that the notation of the proof is not too heavy for the reader. Note that applying the relation established in Equation \eqref{eq: relation} for $T_{k}^{n}$ and $\E[T_{k}^{n}]$, we have that 
\begin{equation}\label{sigmaestimate}
\begin{split}
    \P\bigg(\sum_{\ell=1}^{M} \Big(Y^{n}_{\ell}-\E[Y^{n}_{\ell}]\Big) < -\sqrt{\Var(T_{k}^{n})}x-kbM&\bigg)\leq\P\bigg(T^{n}_{k}-\E[T^{n}_{k}]<-\sqrt{\Var(T_{k}^{n})}x\bigg)\\
    &\leq\P\bigg(\sum_{\ell=1}^{M} \Big(Y^{n}_{\ell}-\E[Y^{n}_{\ell}]\Big)
    < -\sqrt{\Var(T_{k}^{n})}x+kbM\bigg)   
    \end{split}
    \end{equation}
Define by $X^{n}_{\ell}:=Y^{n}_{\ell}-\E[Y^{n}_{\ell}],$ and also define $\sigma^{\star}$ by the following equation:
$$\sigma^{*}:= \sqrt{\frac{\sum_{\ell=1}^{M}Var X^{n}_{l}}{M}}.$$
By our bounds of \cite{Chatterjee2009CentralLT} we know that there exists $c>0$ such that for all $\ell\geq1,$ $n$ and fixed $k\geq2$ 
$$\Var(X_{\ell}^{n}) = c\cdot n^{\gamma}\bigg(1+O\Big(\frac{1}{n^{\gamma}}\Big)\bigg). $$
$$\Var(T_{k}^{n}) =c\cdot n\bigg(1+O\Big(\frac{1}{n}\Big)\bigg).  $$
Hence, we can conclude that for $\gamma>1/2$,
$$\sigma^{\star} = \sqrt{c}\cdot n^{\gamma/2}\bigg(1+O\Big(\frac{1}{n^{1-\gamma}}\Big)\bigg), $$
which is eventually bigger than one for every $n,$ and we can start taking our sequence from that.
By our bounds of \cite{Chatterjee2009CentralLT}, we have that for all $j\geq2$, $\ell\geq1$ and $n$
$$\E[|X^{n}_{\ell}|^{j}|] \leq (2j)^{j}n^{\gamma j/2}. $$
As a consequence of these bounds, Stirling's Inequality, and the previous estimate \eqref{sigmaestimate} on $\sigma^{\star}$, we can derive that there exists a $C>1$, such that for all $j \geq 2,$ $\ell\geq 1,$ and $n$, we have that 
$$ \E[|X^{n}_{\ell}|^{j}]\leq j!\cdot(C\sigma^{\star})^{j}.$$
Hence, we are in conditions to apply Theorem \ref{ldthm2} when $\delta=0$. Also, observe that if we denote by $F_{n}$ the distribution of 
$$\frac{X^{n}_{1}+...+X^{n}_{M}}{\sigma^{\star}\sqrt{M}}, $$
we have that Theorem \ref{ldthm2} analogously applies for $-x$, and  $F_{n}(-x)$, with $x\rightarrow\infty$. With all that in mind, we know that there exists $c_{1}\geq 1$, such that
$$\P\bigg(\sum_{\ell=1}^{M}  \frac{X^{n}_{\ell}}{\sigma^{\star}\sqrt{M}}
    < -\Big(x+kbn^{\frac{1-2\gamma}{2}}\Big)\Big(1+c_{1}\cdot \frac{1}{n^{1-\gamma}}\Big)\bigg)\leq  \P\bigg(\sum_{\ell=1}^{M} \Big(Y^{n}_{\ell}-\E[Y^{n}_{\ell}]\Big) < -\sqrt{\Var(T_{k}^{n})}x-kbM\bigg). $$
We want to estimate this term. Since $\gamma>1/2,$ we know that $x$ will be the dominating term. Choose $\gamma = 4/7$. The conditions of the statement are applicable when 
$$ x = o(n^{\frac{1}{14}}),$$
and we have that there exists $c_{1}^{\star} \geq1,$ such that for $x\geq 1$
$$\P\Bigg(\sum_{\ell=1}^{M}  \frac{X^{n}_{\ell}}{\sigma^{\star}\sqrt{M}}
    < - x\bigg(1+c_{1}^{\star}\cdot\frac{1}{x\cdot n^{\frac{1}{14}}}\bigg)\Bigg)\leq \P\bigg(\sum_{\ell=1}^{M}  \frac{X^{n}_{\ell}}{\sigma^{\star}\sqrt{M}}
    < -\Big(x+kbn^{-\frac{1}{14}}\Big)\Big(1+c_{1}\cdot \frac{1}{n^{\frac{3}{7}}}\Big)\bigg). $$
Applying Theorem \ref{ldthm2} when $\delta=0$,  having in mind that by Equation \eqref{eq:normal2}  
  $$ \bigg[1-\Phi\bigg(x\Big(1+c_{1}^{\star}\cdot\frac{1}{x\cdot n^{\frac{1}{14}}}\Big)\bigg)\bigg] = \Big[1+O\bigg(\frac{x}{n^{\frac{1}{14}}}\bigg)\Big] \bigg[1-\Phi(x)\bigg], $$
  and that since  $n^{\frac{3}{14}}-1\leq \sqrt{M} \leq n^{\frac{3}{14}},$ we obtain  
$$\P\Bigg(\sum_{\ell=1}^{M}  \frac{X^{n}_{\ell}}{\sigma^{\star}\sqrt{M}}
    < - x\bigg(1+c_{1}^{\star}\cdot\frac{1}{x\cdot n^{\frac{1}{14}}}\bigg)\Bigg) = \Big[1+O\bigg(\frac{x}{n^{\frac{1}{14}}}\bigg)\Big]\Big[1-\Phi\Big(x\Big)\Big]\Big[1 + O\bigg(\frac{x^{3}}{n^{\frac{3}{14}}}\bigg)\Big]. $$
Analogously, we have for the upper bound

$$\P\bigg(T^{n}_{k}-\E[T^{n}_{k}]<-\sqrt{\Var(T_{k}^{n})}x\bigg) =  \Big[1-\Phi\Big(x\Big)\Big]\Big[1 + O\bigg(\frac{x^{3}}{n^{\frac{3}{14}}}\bigg)\Big]\Big[1+O\bigg(\frac{x}{n^{\frac{1}{14}}}\bigg)\Big]. $$
And since $x=o(n^{1/14}),$ we can finally conclude  
$$\P\bigg(T^{n}_{k}-\E[T^{n}_{k}]<-\sqrt{\Var(T_{k}^{n})}x\bigg) =  \Big[1-\Phi\Big(x\Big)\Big]\Big[1+O\bigg(\frac{x}{n^{\frac{1}{14}}}\bigg)\Big].$$
\end{proof}

Now, we turn ourselves to the case when $k$ is a polynomial.

\begin{theorem}\label{keyresultpol}
Let $0<\alpha < \frac{1}{11} $. So, for $k=n^{\alpha}$, set $$\gamma^{\star} = \frac{\alpha+2+\sqrt{1-8\alpha+4\alpha^{2}}}{4}.$$ We have that for $x \rightarrow \infty,$ with $x=o(n^{\frac{(1-\gamma^{\star})(\gamma^{\star}-\alpha)-3\gamma^{\star}\alpha}{6(\gamma^{\star}-\alpha)}})$
\begin{equation}\label{eq: mainequationpol}
\P\bigg(T^{n}_{k}- \E[T^{n}_{k}] <-\sqrt{\Var(T_{k}^{n})}x\bigg)= \bigg[1-\Phi(x)\bigg]\bigg[1+O\bigg(\frac{x^{3}}{n^{\frac{(1-\gamma^{\star})(\gamma^{\star}-\alpha)-3\gamma^{\star}\alpha}{2(\gamma^{\star}-\alpha)}}}\bigg)\bigg]\bigg[1+O\bigg(\frac{x^{2}}{n^{\frac{(2\gamma^{\star}-1-\alpha)}{2}}}\bigg)\bigg]
\end{equation}
\end{theorem}
\begin{proof}
 Note that applying the relation established in Equation \eqref{eq: relation} for $T_{k}^{n}$ and $\E[T_{k}^{n}]$, we have that
\begin{equation}\label{sigmaestimate2}
\begin{split}
    \P\bigg(\sum_{\ell=1}^{M} \Big(Y^{n}_{\ell}-\E[Y^{n}_{\ell}]\Big) < -\sqrt{\Var(T_{k}^{n})}x-kbM&\bigg)\leq\P\bigg(T^{n}_{k}-\E[T^{n}_{k}]<-\sqrt{\Var(T_{k}^{n})}x\bigg)\\
    &\leq\P\bigg(\sum_{\ell=1}^{M} \Big(Y^{n}_{\ell}-\E[Y^{n}_{\ell}]\Big)
    < -\sqrt{\Var(T_{k}^{n})}x+kbM\bigg)   
    \end{split}
    \end{equation}
Define by $X^{n}_{\ell}:=Y^{n}_{\ell}-\E[Y^{n}_{\ell}],$ and also define $\sigma^{\star}$ by the following equation:
$$\sigma^{*}:= \sqrt{\frac{\sum_{\ell=1}^{M}\Var X^{n}_{l}}{M}}.$$
By our bounds of \cite{Chatterjee2009CentralLT} we know that there exists $c>1$ such that for all $\ell\geq1,$
$$\frac{1}{c}\cdot \frac{n^{\gamma}}{k}\leq\Var(X_{l}^{n}) \leq c\cdot n^{\gamma}. $$
$$\frac{1}{c}\cdot \frac{n}{k}\leq\Var(T_{k}^{n}) \leq c\cdot n.  $$
 Hence, for $\gamma>1/2$,
$$\frac{1}{\sqrt{c}}\cdot \frac{n^{\gamma/2}}{\sqrt{k}}\leq\sigma^{\star} \leq \sqrt{c}\cdot n^{\gamma/2}, $$
which eventually is bigger than $1$ for every $n$, and we can consider the sequence from that. Finally, observe that if we denote by $S_{n}:=\sum_{\ell=1}^{M} X_{\ell}^{n}, $ we have that Equation \eqref{eq: relation}
$$\bigg|\sqrt{\E\Big[\big(T_{k}^{n}-\E[T_{k}^{n}]\big)^{2}\Big]}- \sqrt{\E\Big[\big(S_{n}-\E[S_{n}]\big)^{2}\Big]} \bigg| \leq \sqrt{\E\Big[\big(T_{k}^{n}-\E[T_{k}^{n}] - S_{n}\big)^{2}\Big]} \leq 2kbM. $$
So, we can conclude by using $\Var(T_{k}^{n}) \geq \frac{1}{c}\cdot\frac{n}{k},$ and $M\leq n^{1-\gamma}$ that
$$\Big|1 - \frac{\sigma^{\star}\sqrt{M}}{\sqrt{\Var(T_{k}^{n})}} \Big| =O( n^{\frac{1-2\gamma}{2}}\cdot k^{3/2}).$$
In particular, the following form will be useful for us:
\begin{equation}\label{eq:variance}
\frac{\sqrt{\Var(T_{k}^{n})}}{\sigma^{\star}\sqrt{M}} =1+O( n^{\frac{1-2\gamma+3\alpha}{2}}),  
\end{equation}
when $1-2\gamma+3\alpha<0.$
By our bounds of \cite{Chatterjee2009CentralLT}, we have the following bound over the moments. For all $j\geq2$, $\ell\geq1$ and $n$ 
$$\E[|X^{n}_{l}|^{j}|] \leq (2j)^{j}n^{\gamma j/2}. $$
As a consequence of these bounds, Stirling's Inequality, and the lower bound on the previous estimate \eqref{sigmaestimate2} on $\sigma^{\star}$, we can derive that there exists a $C>1$, such that for all $j \geq 2,$ $\ell\geq 1,$ and $n$, 
$$ \E[|X^{n}_{\ell}|^{j}]\leq (2j)^{j}(\sqrt{c}\cdot n^{\alpha/2}\sigma^{\star})^{j} \leq j!\cdot(C\sigma^{\star})^{j(1+\frac{\alpha}{\gamma-\alpha})},$$
where we demand that $\alpha<\gamma-\alpha.$ Hence, we are in conditions to apply Theorem \ref{ldthm2} for $\delta = \frac{\alpha}{\gamma-\alpha}$. Observe by using the bound on Equation \eqref{eq:variance} that there exists a constant $c_{0}\geq 1,$ such that (noticing that $kbM\leq b\cdot n^{\frac{1-\gamma+2\alpha}{2}}$)
$$\P\bigg(\sum_{\ell=1}^{M}  \frac{X^{n}_{\ell}}{\sigma^{\star}\sqrt{M}} < -x\Big(1+c_{0}\cdot n^{\frac{1-2\gamma+3\alpha}{2}}\Big)\bigg)\leq\P\bigg(\sum_{\ell=1}^{M}  \frac{X^{n}_{\ell}}{\sigma^{\star}\sqrt{M}} < -x\bigg(\frac{\sqrt{\Var(T_{k}^{n})}}{\sigma^{\star}\sqrt{M}}+\frac{kbM}{x\cdot\sigma^{\star}}\bigg)\bigg). $$
 Notice that Equation \eqref{eq:normal2} gives us that
\begin{equation}\label{eq:normalaprox}
     \bigg[1-\Phi\bigg(x\Big(1+c_{0}\cdot n^{\frac{1-2\gamma+3\alpha}{2}}\Big)\bigg)\bigg] = \bigg[1+O\bigg(\frac{x^{2}}{n^{\frac{(2\gamma-1-3\alpha)}{2}}}\bigg)\bigg] \bigg[1-\Phi(x)\bigg]. 
     \end{equation}
Hence, for $$x=o\big(\min(\frac{n^{\frac{(1-\gamma)}{6}}}{n^{\frac{\gamma\alpha}{2(\gamma-\alpha)}}};\frac{n^{\frac{(1-\gamma)}{2}}}{n^{\frac{3\gamma\alpha}{2(\gamma-\alpha)}}})\big)=o(n^{\frac{(1-\gamma)(\gamma-\alpha)-3\gamma\alpha}{6(\gamma-\alpha)}}),$$ using Theorem \ref{ldthm2}, Equation \eqref{eq:normalaprox}, the inequalities $n^{\frac{(1-\gamma)}{2}}-1\leq \sqrt{M} \leq n^{\frac{(1-\gamma)}{2}},$ and the upper bound on the estimate \eqref{sigmaestimate2} of $\sigma^{\star},$ we finally get that
$$\P\bigg(\sum_{\ell=1}^{M}  \frac{X^{n}_{\ell}}{\sigma^{\star}\sqrt{M}} < -x\Big(1+c_{0}\cdot n^{\frac{1-2\gamma+3\alpha}{2}}\Big)\bigg) =\bigg[1+O\bigg(\frac{x^{2}}{n^{\frac{(2\gamma-1-3\alpha)}{2}}}\bigg)\bigg] \bigg[1-\Phi(x)\bigg]\bigg[1+O\bigg(\frac{x^{3}}{n^{\frac{(1-\gamma)(\gamma-\alpha)-3\gamma\alpha}{2(\gamma-\alpha)}}}\bigg)\bigg],$$
when $3\alpha<2\gamma-1.$ For the upper bound, we have that there exists a constant $c_{1}\geq 1,$ such that 
$$\P\bigg(\sum_{\ell=1}^{M}  \frac{X^{n}_{\ell}}{\sigma^{\star}\sqrt{M}} < -x\Big(\frac{\sqrt{\Var(T_{k}^{n})}}{\sigma^{\star}\sqrt{M}}-\frac{kbM}{x\cdot\sigma^{\star}}\Big)\bigg) \leq \P\bigg(\sum_{\ell=1}^{M}  \frac{X^{n}_{l}}{\sigma^{\star}\sqrt{M}}
    < -x\Big(1-c_{1}\cdot n^{\frac{1-2\gamma+3\alpha}{2}}\bigg). $$
By analogous computations we can finally conclude that for $x=o(n^{\frac{(1-\gamma)(\gamma-\alpha)-3\gamma\alpha}{6(\gamma-\alpha)}})$
\begin{equation}
\P\bigg(T^{n}_{k}- \E[T^{n}_{k}] <-\sqrt{\Var(T_{k}^{n})}x\bigg)= \bigg[1-\Phi(x)\bigg]\bigg[1+O\bigg(\frac{x^{3}}{n^{\frac{(1-\gamma)(\gamma-\alpha)-3\gamma\alpha}{2(\gamma-\alpha)}}}\bigg)\bigg]\bigg[1+O\bigg(\frac{x^{2}}{n^{\frac{(2\gamma-1-3\alpha)}{2}}}\bigg)\bigg]
\end{equation}
Now, we wish to see that for $0<\alpha < \frac{1}{11}, $ we can find a suitable choice of $\gamma>1/2$ that satisfies all the desired conditions. Observe that we demand that 
$$ 2\alpha < \gamma, $$
$$ 3\alpha < 2\gamma-1, $$
$$ \alpha < \frac{\gamma - \gamma^{2}}{2\gamma+1}. $$
Since $\gamma<1,$ notice that the second inequality implies the first inequality. Hence, we just need to consider the second and third inequalities. Observe that if we equal the boundaries of the second and third inequalities, we will get hat $\alpha = \frac{1}{168}(-32+8\sqrt{37})\approx 0.099<\frac{1}{11}, $ and $\gamma = \frac{1}{14}(3 +\sqrt{37}) $ is the solution for it. Since $\alpha$ is a harmonic function, we know that this is the maximum value attainable. Finally, notice that the second and third inequalities imply the following condition on $\gamma$:

$$\gamma>(3\alpha+1)/2,$$
and 
$$\frac{1}{2}(1-2\alpha- \sqrt{1-8\alpha+4\alpha^{2}}) < \gamma < \frac{1}{2}(1-2\alpha+ \sqrt{1-8\alpha+4\alpha^{2}}). $$
With that, we conclude that 
$$ \frac{1}{2}(3\alpha+1) < \gamma < \frac{1}{2}(1-2\alpha+ \sqrt{1-8\alpha+4\alpha^{2}}), $$ 
which is a positive interval for $0<\alpha < \frac{1}{11}.$ Set 
$$\gamma^{\star} = \frac{\alpha+2+\sqrt{1-8\alpha+4\alpha^{2}}}{4}, $$
and we have the desired result.
\end{proof}

\section{Previous write-up}
 
 For this last Section we will consider when we are taking all the possible rectangles with $k$ stripes, and not only the disjoint ones. Once again, we will only present the explicit calculations when $k$ is a polynomial $n^{\alpha}$ for $0<\alpha<\frac{1}{11}$. The case when $k\geq2$ is a constant will carry out analogously (observe that when $k=1$ there is not intersection so it's it's the same as solving the problem in Section \ref{sectribes}).\par
The problem that arises, is that now, we don't have that our random variables are independent. Hence, we will need to modify our proofs of the last sections. Thankfully, the previous results will still help us. Again, we will want to prove that for $\beta \in (0,1),$ any $\beta-$quantile $q^{\star}_{\beta},$ satisfies that $\P(V_{k}^{n\star}<q^{\star}_{\beta})$ is non-trivial and that the sequence of functions $\mathbbm{1}_{V_{k}^{n\star}<q^{\star}_{\beta}}$ is Noise Sensitive. Unfortunately, we will not be able to explicitly obtain a choice of $q_{\beta}$, but we sill get that $\P(V_{k}^{n\star}<q^{\star}_{\beta})\rightarrow \beta.$ \par 
\textbf{The quantile is non-trivial: }First, observe that since $V_{k}^{n\star} \leq V_{k}^{n},$ we must have that for any $\beta-$quantile $q_{\beta}^{\star}$ of $V^{n\star}_{k}$ we can find a $\beta-$quantile $q_{\beta}$ of $V_{k}^{n}$ such that $q_{\beta}^{\star}\leq q_{\beta}$. Consider for any $t$, $N_{m}^{t}:= \# \{ j : T^{n}_{j,k} <t \}$ and note that 
$$\E[N_{m}^{t}] = (n-k+2)\P(T^{n}_{k}< t). $$
Observe that by Markov's Inequality, we have that 
$$ \P(V_{k}^{n\star}<t)= \P(N_{m}^{t} \geq 1) \leq \E[N_{m}^{t}]. $$
Following Equation \eqref{eqindividualbox}, we may define 
$$ \tau_{\eps} = \E[T^{n}_{k}] - \sqrt{\Var(T^{n}_{k})}\sqrt{2log\Big(n\Big) - log\Big(2log\Big(n\Big)\Big)-2\log(\sqrt{2\pi}\frac{\eps}{2})}, $$
and observe that 
 \begin{equation*}
\begin{split}
\P\bigg(T^{n}_{k} - \E[T^{n}_{k}]< -\sqrt{\Var(T^{n}_{k})}\cdot d\bigg)
 & =  \bigg(1+o(1)\Big)\bigg)\frac{1}{\sqrt{2\pi }d}e^{-d^{2}/2}\\
 &=(1+o(1))\frac{1}{\sqrt{2\pi}}\frac{k}{n}\sqrt{2\pi}\frac{\eps}{2} = (1+o(1)) \cdot  \frac{\eps}{2n}
 \end{split}
 \end{equation*}
 
giving us that for any $\eps$, $\E[N_{m}^{\tau_{\eps}}]\leq \eps$ for sufficiently large $n$. By definition of $\beta-$quantile, for $q_{1-\eps},$ we have that $\P(V_{k}^{n}>q_{1-\eps})\leq \eps,$ and we conclude that for any $\eps$
$$\P(V_{k}^{n\star}< \tau_{\eps}) \leq \eps \text{ \ and \ } \P(V_{k}^{n\star}> q_{1-\eps})\leq \eps.$$
Observe now that for any interval $[l,l+1],$ with $\tau_{\eps}-1 \leq l \leq q_{1-\eps}+1,$ we have that 
$$\P(V_{k}^{n\star} \in [l,l+1]) \leq n \cdot \P(T_{k}^{n} \in [l,l+1]). $$
By a similar computation to the proof of Theorem \ref{sidetheorem}, we have by Theorem \ref{tailestimate} that
$$ \P(V_{k}^{n\star} \in [l,l+1]) \leq n\cdot O(\log^{3/2}(n)/n^{1+\min(\theta,\theta')}) \rightarrow 0. $$
So, for $\beta \in [\eps,1-\eps]$ we have that $q^{\star}_{\beta} \in [\tau_{\eps},q_{1-\eps}].$ Moreover, since the probability is not concentrated in any interval $[l,l+1],$ we have that if $\beta<\beta',$ then $q^{\star}_{\beta}<q^{\star}_{\beta'},$ for all large values of $n.$ In particular, since $$\P\Big(V_{k}^{n\star} \in [q^{\star}_{\beta}-1/2,q^{\star}_{\beta}+1/2]\Big)\rightarrow 0,$$ we have that for any $\beta \in (\eps,1-\eps),$
$$\P(V_{k}^{n\star} < q^{\star}_{\beta}) \rightarrow \beta. $$
Hence, since $\eps>0$ is arbitrary, we get that for any $\beta \in (0,1),$ we have that any $\beta-$quantile of $V_{k}^{n\star}$ denoted by $q^{\star}_{\beta},$ satisfies that $\P(V_{k}^{n\star} < q^{\star}_{\beta}) \rightarrow \beta,$ and also has a correspondent in $V_{k}^{n}$ such that $q_{\beta}^{\star} \leq q_{\beta}.$
\par
\textbf{Calculating the influences: }For the influences, we may observe the following. If we take an edge $e$ in the square $[0,n]\times[0,n],$ this edge is present in at most $k$ boxes. Consider the case where $e=a$. So, the event that the edge $e$ is pivotal implies that one of the boxes $T^{n}_{j,k}$, such that $e \in T^{n}_{j,k}$ has its sum 
$$T^{n}_{j,k}\in \Big[q_{\beta}^{\star}-(b-a), q_{\beta}^{\star}\Big), $$
and all the boxes where $e \notin T^{n}_{\ell,k}$ have their sum bigger or equal than $q_{\beta}^{\star}$. Define $h^{\star}_{n}:=\mathbbm{1}_{V_{k}^{n\star}<q_{\beta}^{\star}}.$ So, by the Union Bound (and considering that the case $e=b$ is symmetrical) 

$$ Inf_{e}(h^{\star}_{n}) \leq 2\sum_{e\in T^{n}_{j,k}}\P\Big(\big\{ T^{n}_{j,k} \in [q_{\beta}^{\star}-(b-a),q_{\beta}^{\star})\big\}\bigcap \big\{\cap_{e \notin T^{n\star}_{\ell,k}} T^{n\star}_{\ell,k} \geq q_{\beta}^{\star} \big\} \Big).  $$
Since the calculus is analogous to when we estimate for the first box $T_{k}^{n}$, so we just need an upper bound for
$$ Inf_{e}(h_{n}^{\star}) \leq k\cdot\P\Big(\big\{ T^{n}_{k} \in [q_{\beta}^{\star}-(b-a),q_{\beta}^{\star})\big\}\Big).  $$
Since for sufficiently large $n$ $\tau_{\eps} \leq q^{\star}_{\beta} \leq q_{\eps},$ we have that 
$$ q^{\star}_{\beta} = \E[T^{n}_{k}] - \sqrt{\Var(T^{n}_{k})}g(n),$$
for a certain function $g(n)$ that obeys the following inequalities 
$$\sqrt{2log\Big(\frac{n}{k}\Big) - log\Big(2log\Big(\frac{n}{k}\Big)\Big)-2\log(\sqrt{2\pi}\log \frac{2}{\eps})} \leq g(n) \leq \sqrt{2log\Big(n\Big) - log\Big(2log\Big(n\Big)\Big)-2\log(\sqrt{2\pi}\frac{\eps}{2})} $$
Consider 
$$ \frac{(1-\gamma^{\star})(\gamma^{\star}-\alpha)-3\gamma^{\star}\alpha}{2(\gamma^{\star}-\alpha)} < \frac{(2\gamma^{\star}-1-3\alpha)}{2}.$$
The case when $\frac{(1-\gamma^{\star})(\gamma^{\star}-\alpha)-3\gamma^{\star}\alpha}{2(\gamma^{\star}-\alpha)} \geq \frac{(2\gamma^{\star}-1-3\alpha)}{2}$ will carry out analogously as in the proof of Theorem \ref{sidetheorem}.
Using a similar computation to what was done in Theorem \ref{sidetheorem}, we have that $ \P\bigg(T^{n}_{k}\in \Big[q^{\star}_{\beta}-(b-a), q^{\star}_{\beta}\Big)\bigg)$ can be estimated with Theorem \ref{keyresultpol} as
\begin{equation*}
\begin{split}
& = \bigg[1-\Phi\Big(g-\frac{(b-a)}{\sqrt{\Var(T^{n}_{k})}}\Big)\bigg]\bigg[1+O\bigg(\frac{g^{2}}{n^{\frac{(2\gamma^{\star}-1-3\alpha)}{2}}}\bigg)\bigg] - \bigg[1-\Phi\Big(g\Big)\bigg]\bigg[1+O\bigg(\frac{g^{2}}{n^{\frac{(2\gamma^{\star}-1-3\alpha)}{2}}}\bigg)\bigg]\\
& = O\Big( \int_{g-\frac{(b-a)}{\sqrt{\Var(T^{n}_{k})}}}^{g}e^{-x^{2}/2}dx \Big) + \bigg[1-\Phi\Big(g-\frac{(b-a)}{\sqrt{\Var(T^{n}_{k})}}\Big)\bigg]O\bigg(\frac{g^{2}}{n^{\frac{(2\gamma^{\star}-1-3\alpha)}{2}}}\bigg)\\ &= O\bigg(\frac{1}{\sqrt{\Var(T_{k}^{n})}}e^{-g^{2}/2}\bigg) +O\bigg(\frac{e^{-g^{2}/2}\cdot g}{n^{\frac{(2\gamma^{\star}-1-3\alpha)}{2}}}\bigg)  \\ &= O\Big(\frac{\log n}{n^{\frac{(2\gamma^{\star}+1-5\alpha)}{2}}}\Big),
\end{split}
\end{equation*}
and so
$$Inf_{e}(h^{\star}_{n}) = O\Big(k\cdot \frac{\log n}{n^{\frac{(2\gamma^{\star}+1-5\alpha)}{2}}}\Big)=O\Big(\frac{\log n}{n^{\frac{(2\gamma^{\star}+1-6\alpha)}{2}}}\Big).$$
Since, $2\gamma^{\star}+1-6\alpha = 2+\theta^{\star}$, for a $\theta^{\star}>0,$ and $e$ was an arbitrary edge, we conclude by the BKS Theorem that the sequence $h_{n}^{\star}$ is Noise Sensitive.

\section{Moderate Deviations Result}\label{sec:MD}

In this section we state and prove a Cram\'er-type moderate deviations result. The result is different from Cram\'er's classical result in that our result applies to triangular arrays, i.e.\ sums of i.i.d.\ random variables where the distribution is allowed to vary as more variables are included. Our proof will follow closely the proof of Cram\'er's Theorem as presented by Feller~\cite[Chapter~XVI.7]{feller-vol-2}. As mentioned in the introduction, this will be one of the key steps to prove Theorem \ref{sidetheorem}. 

 \begin{theorem}\label{TheLargeDeviations}
 Let $X^{(m)}_{1},X^{(m)}_2,\ldots,X^{(m)}_{m}$ be a sequence of i.i.d.\ random variables with mean zero, finite variance $\sigma_{m}^2\geq1$ and common distribution $G^{(m)}$. 
 Suppose further that there exists a global constant $C>1$, such that for all $j\geq 2$ and $m\ge1$, and for each random variable $X$ that appears in any instance of the sequences, we have
 \begin{equation}\label{eq:condition1}
 \E[|X|^{j}] \leq j!\,C^j\,\E[X^{2}]^{j/2}.
 \end{equation}
 Let $F_{m}$ be the distribution of the normalized sum $(X^{(m)}_{1}+...+X^{(m)}_{m})/\sigma_m\sqrt{m}$.
 Then, for $1\ll x\ll m^{1/6}$ we have as $m\to\infty$ that
$$
1-F_{m}(x)= \Big[1 + O\Big(\frac{x^{3}}{\sqrt{m}}\Big)\Big]\big[1-\Phi(x)\big].
$$
\end{theorem}

\begin{proof}
Throughout the proof we suppress the dependence on $m$ from the notation at relevant places. For instance, we shall let $\mu_j:=\E[(X_i^{(m)})^j]$ denote the $j$th moment of $G^{(m)}$, and denote by $f$ its moment generating function
$$f(s) := \E[e^{s X_{i}^{(m)}}].$$
Observe that, by~\eqref{eq:condition1}, this integral is well-defined (at least) for $|s|<1/C\sigma$, since
\begin{equation}\label{eq:mgf}
|f(s)-1| \leq \sum_{j=2}^{\infty} \frac{|\mu_{j}|}{j!}|s|^{j}\leq \sum_{j=2}^{\infty}|Cs\sigma|^{j}= \frac{(Cs\sigma)^{2}}{1-|Cs\sigma|}.
\end{equation}
Since the expansion of $f$ is a convergent series for $|s|<1/C\sigma$, $f$ is continuously differentiable of all orders on the same interval.
We further let $\psi(s):= \log f(s)$ be the cumulant generating function of $G^{(m)}$. Its expansion is likewise well-defined and takes the form
$$\psi(s) = \frac{1}{2}\sigma^{2}s^{2}+\frac{1}{6}\mu_{3}s^{3} + O(s^{4}). $$
In particular, the coefficients coincide with those of $f$ for the first three terms. The first two derivatives of $\psi$ are given by
$$
\psi'(s)=\frac{f'(s)}{f(s)}\quad\text{and}\quad\psi''(s)=\frac{f''(s)f(s)-f'(s)^2}{f(s)^2}.
$$
An application of the Cauchy-Schwartz inequality shows that
$$
f'(s)^2=\E\big[Xe^{sX}\big]^2\le\E\big[|X|e^{sX}\big]^2\le\E\big[X^2e^{sX}\big]\E\big[e^{sX}\big]=f''(s)f(s),
$$
so that $\psi''(s)\geq0$ on the domain where it is defined. In fact, as $G^{(m)}$ has mean zero and nonzero variance, the first inequality is strict and $\psi''(s)>0$ where defined. Since $\psi'(0)=0$ it follows that $\psi'$ is positive and strictly increasing on the interval 
$(0,1/C\sigma)$. Consequently, for $s>0$ and $x>0$ the relation
\begin{equation}\label{eq:x-s}
\sqrt{m}\psi'(s) = \sigma x,
\end{equation}
establishes a 1-1 correspondence between $s$ and $x$. For $s=o(1/\sigma)$, by~\eqref{eq:mgf}, we further observe that 
$$ \psi'(s) = \frac{f'(s)}{f(s)} = f'(s)\big(1+o(1)\big). $$
Besides, it follows from~\eqref{eq:condition1} that for $s=o(1/\sigma)$
\begin{equation}\label{derivativeexpansion}
    \begin{split}
        f'(s) = \sigma^{2}s + \sum_{j=2}^\infty\frac{\mu_{j+1}}{j!}s^{j}
         =\sigma^{2}s\bigg[1 + \sum_{j=2}^{\infty}\frac{1}{j!}\frac{\mu_{j+1}}{\sigma^{j+1}}(\sigma s)^{j-1} \bigg]
         =\sigma^2s\big(1+o(1)\big),
    \end{split}
\end{equation}
and hence that
\begin{equation}\label{approximationfors}
     s = \big(1+o(1)\big)\frac{x}{\sigma\sqrt{m}} \quad\text{for } s = o(1/\sigma).
\end{equation}

Following Feller, we next associate a new probability distribution $V$ with the distribution $G$, defined by 
\begin{equation}\label{eq:associated}
V(dy) = e^{-\psi(s)}e^{sy}G(dy),
\end{equation}
where $s$ is chosen accordingly to our previous restrictions. Analogously to the function $f$, we define the moment generating function of $V$ as 
$$\nu(\zeta):=\int e^{\zeta y}V(dy) = \frac{f(\zeta+s)}{f(s)}. $$
It follows by differentiation that $V$ has expectation $\psi'(s)$ and variance $\psi''(s)$. Now, let $F_{m}^{\star}$ denote the the non-normalized version of $F_{m}$, i.e.\ the cumulative distribution function of the sum of $n$ independent variables distributed as $G$, and let $V_{m}^{\star}$ denote ditto for $n$ independent variables distributed as $V$. Then $V_{m}^{\star}$ has expectation $m\psi'(s)$ and variance $m\psi''(s)$, and $F_m^\star$ and $V_m^\star$ satisfy a relation similar to~\eqref{eq:associated} with $e^{-\psi(s)}$ replaced by $e^{-m\psi(s)}$. (To verify this, check that the moment generating functions match.) It follows that
\begin{equation}\label{errorequation}
    1-F_{m}(x) = 1-F_{m}^{\star}(x\sigma\sqrt{m}) = e^{m\psi(s)}\int_{x\sigma\sqrt{m}}^{\infty}e^{-sy}V_{m}^{\star}(dy).
\end{equation}

The proof will now proceed in two steps. We first analyse the expression obtained from~\eqref{errorequation} when substituting $V_m^\star$ by the normal distribution with the same mean and variance. Second, we evaluate the relative error committed by this operation.
So, we define $A_{s}$ to be the quantity obtained by substituting $V_{m}^{\star}$ by $N(m\psi'(s),m\psi''(s))$ in the right-hand side of~\eqref{errorequation}. Using the substitution of variables $y=m\psi'(s)+z\sqrt{m\psi''(s)}$ we have that for $s=o(1/\sigma)$ 
\begin{equation}\label{eq:As}
\begin{split}
    A_{s} &:= e^{m\psi(s)}\int_{x\sigma\sqrt{m}}^{\infty}e^{-sy}\frac{1}{\sqrt{2\pi m \psi''(s)}}e^{- (y-m\psi'(s))^{2}/(2m\psi''(s))}\,dy\\ 
    &\;=e^{m[\psi(s)-s\psi'(s)+\frac{1}{2}s^{2}\psi''(s)]}\frac{1}{\sqrt{2\pi}}\int_{0}^{\infty}e^{-(z+s\sqrt{m\psi''(s)})^{2}/2}\,dz.
    \end{split}
\end{equation}
We are interested in the behavior of $m[\psi(s)-s\psi'(s)+\frac{1}{2}s^{2}\psi''(s)]$. By~\eqref{eq:mgf} we have $f(s)=1+O((\sigma s)^{2})$, and since $\log(1+x)=x+O(x^2)$ we obtain for $s=o(1/\sigma)$ that
$$
\psi(s)=\log(f(s))= (f(s)-1)+O\big((\sigma s)^{4}\big)= \frac{1}{2}\sigma^{2}s^{2} + \frac{1}{6}\mu_{3}s^{3}+O\big((\sigma s)^{4}\big).
$$
Since $1/(1+x)=1+O(x)$, we find that $1/f(s)=1+O((\sigma s)^2)$. Arguing as in~\eqref{derivativeexpansion}, we find that $f'(s)=\sigma^2s+\frac12\mu_3s^2+O(\sigma^4s^3)$. Consequently, for $s=o(1/\sigma)$, we have
\begin{equation}\label{eq:psi'}
        \psi'(s)=\frac{f'(s)}{f(s)} = \big(\sigma^2s+\frac12\mu_3s^2+O(\sigma^4s^3)\big)\big(1+O\big((\sigma s)^{2}\big)\big)=\sigma^{2}s+\frac{1}{2}\mu_{3}s^{2}+O(\sigma^{4}s^{3}).
\end{equation}
In particular, $\big(f'(s)/f(s)\big)^{2}=\psi'(s)^2=O(\sigma^{4}s^{2})$, which by a similar calculation gives that
\begin{equation}\label{eq:psi''}
        \psi''(s)=\frac{f''(s)f(s)-f'(s)^{2}}{f(s)^{2}} =\big(\sigma^2+\mu_3s+O(\sigma^4s^2)\big)\big(1+O\big((\sigma s)^{2}\big)\big)+O(\sigma^4s^2)
        =\sigma^{2}+\mu_{3}s+O(\sigma^{4}s^{2}),
\end{equation}
again assuming that $s=o(1/\sigma)$. In summary, for $s=o(1/\sigma)$
$$
m\Big[\psi(s)-s\psi'(s)+\frac{1}{2}s^{2}\psi''(s)\Big]=\frac16m\mu_3s^3+O\big(m(\sigma s)^4\big).
$$
Since $|\mu_3|\le6C^3\sigma^3$, the last expression vanishes for $s=o(1/(\sigma m^{1/3}))$, leading to the estimate
\begin{equation}\label{eq:As1}
    e^{m[\psi(s)-s\psi'(s)+\frac{1}{2}s^{2}\psi''(s)]}= 1+ O\Big(m\Big[\psi(s)-s\psi'(s)+\frac{1}{2}s^{2}\psi''(s)\Big]\Big)= 1+O\big(m(\sigma s)^{3}\big).
\end{equation}
By~\eqref{approximationfors} the condition $s=o(1/\sigma m^{1/3})$ is equivalent to $x=o(m^{1/6})$, so that~\eqref{eq:As} and~\eqref{eq:As1} give
\begin{equation}\label{eq:As2}
A_{s} = [1-\Phi(\bar{x})]\Big[1+O\Big(\frac{x^{3}}{\sqrt{m}}\Big)\Big]
\end{equation}
for $x=o(m^{1/6})$, where $\bar{x}:=s\sqrt{m\psi''(s)}$.

Next, we want to verify that we can substitute $\bar{x}$ by $x$ in~\eqref{eq:As2}. Observe that, by definition, $$\frac{|\bar{x}-x|}{\sqrt{m}}= \Big|s\sqrt{\psi''(s)} - \frac{1}{\sigma}\psi'(s)\Big|. $$
Using~\eqref{eq:psi''} and the expansion $\sqrt{1+x}=1+\frac12x+O(x^2)$ we obtain for $s=o(1/\sigma)$ that
$$s\sqrt{\psi''(s)}=\sigma s\sqrt{1+\frac{\mu_{3}}{\sigma^{2}}s + O(\sigma^{2}s^{2})} = \sigma s +\frac{\mu_{3}}{2\sigma}s^{2} + O(\sigma^{3}s^{3}), $$
and thus that, by~\eqref{eq:psi'}, for $s=o(1/\sigma m^{1/3})$,
\begin{equation}\label{eq:xbar}
|\bar{x}-x|=\sqrt{m}\Big|s\sqrt{\psi''(s)} - \frac{1}{\sigma}\psi'(s)\Big|= O(\sqrt{m}\sigma^{3}s^{3})=O(x^3/m).
\end{equation}
Denote by $\varphi(y)$ the density of the standard normal distribution. Recall that as $y\rightarrow \infty$
\begin{equation}\label{eq:normal}
\frac{\varphi(y)}{1-\Phi(y)}  = (1+o(1))y.
\end{equation}
Integrating the above expression between $x$ and $\bar x$ we find, via~\eqref{eq:xbar}, for $x\to\infty$ and $x=o(m^{1/6})$, that
$$ \Big|\log \frac{1-\Phi(\bar{x})}{1-\Phi(x)}\Big| = O\big(|x+\bar x| | \bar{x}-x|\big) = O(x^{4}/m),  $$
and finally, since $e^x=1+O(x)$, that 
$$\frac{1-\Phi(\bar{x})}{1-\Phi(x)} = 1+O(x^{4}/m). $$
Together with~\eqref{eq:As2} we obtain for $x\rightarrow\infty$ with $x=o(m^{1/6})$ that
\begin{equation}\label{concludingeq}
A_{s} = [1-\Phi(x)]\Big[1+O\Big(\frac{x^{3}}{\sqrt{m}}\Big)\Big].
\end{equation}

It remains to estimate the error committed by substituting $V_m^\star$ by the $N(m\psi'(s),m\psi''(s))$ distribution in the right-hand side of~\eqref{errorequation}. Let $Y$ be a generic random variable distributed according to $V$. Recall that $Y$ has mean $\psi'(s)$ and variance $\psi''(s)$.
Let $\Phi_{s}$ denote the cumulative distribution function of the $N(m\psi'(s),m\psi''(s))$ distribution. By the Berry-Esseen Theorem, we have that for all $y$
$$|V_{m}^{\star}(y)-\Phi_{s}(y)|\le 3\frac{\E\big[|Y-\psi'(s)|^{3}\big]}{\psi''(s)^{3/2}\sqrt{m}}. $$
Integration by parts, using~\eqref{eq:x-s} and~\eqref{errorequation}, for $0<s=o(1/\sigma)$, leads to 
\begin{equation*}
\begin{split}
\big|1-F_{m}(x)-A_{s}\big|&= \Big|e^{m\psi(s)}\int_{x\sigma\sqrt{m}}^\infty e^{-sy}\,V_m^{\star}(dy) - e^{m\psi(s)}\int_{x\sigma\sqrt{m}}^\infty e^{-sy}\,\Phi_{s}(dy)\Big| \\
& \leq e^{m\psi(s)}\bigg(\Big[e^{-sy}\big|V^{\star}_{m}(y)-\Phi_{s}(y)\big|\Big]_{m\psi'(s)}^{\infty} + s\int_{m\psi'(s)}^{\infty}e^{-sy}\big|V^{\star}_{m}(y)-\Phi_{s}(y)\big|\,dy\bigg) \\
 &\le 6\frac{\E\big[|Y-\psi'(s)|^{3}\big]}{\psi''(s)^{3/2}\sqrt{m}}\cdot e^{m[\psi(s)-s\psi'(s)]}.
\end{split}
\end{equation*}
We may bound the central absolute third moment of $V$, for $s=o(1/\sigma)$, as
$$
\E\big[|Y-\psi'(s)|^{3}\big]\le 2^3\big(\E\big[|Y|^3\big]+\psi'(s)^3\big)\le 8\E[Y^4]^{3/4}+O(\sigma^3).
$$
Since $\E[Y^{4}]=\frac{f^{(4)}(s)}{f(s)}$, which by~\eqref{eq:condition1} is $\sigma^4(1+O(\sigma s))$, and $\psi''(s)=\sigma^{2}(1+O(\sigma s))$ we conclude that for $s=o(1/\sigma)$
$$ \big|1-F_{m}(x)-A_{s}\big| = O(1/\sqrt{m})\cdot e^{m[\psi(s)-s\psi'(s)]}. $$
Moreover, by~\eqref{eq:As}, together with~\eqref{eq:normal} and the observation that $x/\bar x=1+o(1)$, we find that
$$A_{s} = e^{m[\psi(s)-s\psi'(s)]}\frac{1-\Phi(\bar{x})}{e^{-\bar{x}^{2}/2}} = (1+o(1))\frac{1}{\sqrt{2\pi}}\frac{1}{ x}e^{m[\psi(s)-s\psi'(s)]},$$
and therefore that
$$\big|1-F_{m}(x)-A_{s}\big|=O\Big(\frac{x}{\sqrt{m}}\Big)A_{s}. $$
In conclusion,
$$1-F_{m}(x) = A_{s}\bigg[1+O\bigg(\frac{x}{\sqrt{m}}\bigg)\bigg], $$
which together with \eqref{concludingeq} completes the proof.
\end{proof}